\newtheorem{theorem}{Theorem}[section]
\newtheorem{proposition}[theorem]{Proposition}
\newtheoremstyle{remarkstyle}
  {\medskipamount}   
  {\medskipamount}   
  {\normalfont}      
  {}                
  {\itshape}       
  {.}              
  {0.5em}       
{\thmname{#1}\thmnote{ #3}}
\theoremstyle{remarkstyle}
\newtheorem*{remark}{Remark}
\newtheorem*{remarks}{Remarks}
\numberwithin{equation}{section}
\newcommand\mytop[2]{\genfrac{}{}{0pt}{}{#1}{#2}}
\def\im{\mathrm{i}}
\def\d{\mathrm{d}}
\def\id{\,\mathrm{d}}
\def\e{\mathrm{e}}
\def\reg{\operatorname{reg}}
\def\Re{\operatorname{Re}}
\def\Im{\operatorname{Im}}
\definecolor{customgreen}{rgb}{0.0, 0.5, 0.0}
\author[G. Nemes]{Gerg\H{o} Nemes}
\address{School of Mathematics, Harbin Institute of Technology, Xidazhi Street, Harbin 150001,\newline Heilongjiang, People's Republic of China}
\email{g.nemes@hit.edu.cn}
\keywords{difference equations, asymptotic expansions, inverse factorial series, Borel transform, hyperasymptotic expansions, hypergeometric functions}
\subjclass[2020]{Primary: 34E05, 39A13 Secondary: 33C05}
\begin{document}

\title[Hyperasymptotics for linear difference equations]{Hyperasymptotics for linear difference equations\\ with an irregular singularity of rank one:\\ Polynomial coefficients}

\begin{abstract}
Hyperasymptotics is an analytical method that incorporates exponentially small contributions into asymptotic approximations, thereby expanding their domain of validity, improving accuracy, and providing deeper insight into the underlying singularity structures. It also allows for the computation of problem-specific invariants, such as Stokes multipliers, whose values are often assumed or remain unknown in other approaches. For differential equations, unlike standard asymptotic expansions, hyperasymptotic expansions determine solutions uniquely. In this paper, we extend the hyperasymptotic method to inverse factorial series solutions of certain higher-order linear difference equations and demonstrate that the resulting expansions also determine the solutions uniquely. We further indicate how the connection coefficients appearing in these expansions can be computed numerically using hyperasymptotic techniques. In addition, we give explicit remainder bounds for the inverse factorial series solutions. Our main tool is the Mellin--Borel transform. The expansions are expressed via universal hyperterminant functions, closely related to the hyperterminants familiar from integral and differential equation contexts. The results are illustrated by the Gauss hypergeometric function with a large third parameter and a third-order difference equation.
\end{abstract}

\maketitle

\section{Introduction}

In this paper, we investigate linear difference equations of order $n \ge 2$ of the form
\begin{equation}\label{diffeq}
w(z + n) + \sum_{k = 0}^{n - 1} f_k (z)w(z + k) = 0,
\end{equation}
where the coefficient functions $f_k(z)$ are polynomials of degree at most $n - k$, and $f_0(z)$ has degree exactly $n$. In particular, the point at infinity is an irregular singularity of rank $1$. It will be convenient to express these polynomials in the basis of falling factorials rather than the standard monomial basis:
\begin{equation}\label{fexpansions}
f_k(z) =\sum_{m = 0}^{n-k} f_{m,k} \frac{\Gamma (z + 1)}{\Gamma (z - n + k + m + 1)}  ,\quad k=0,1,\ldots,n-1,
\end{equation}
with $f_{0,0}\ne 0$.

Formal series solutions of \eqref{diffeq} in descending powers of $z$ are given by
\begin{equation}\label{formalseries}
    \Gamma (z)\lambda _j^{-z} z^{ \mu _j } \sum_{s = 0}^\infty  \frac{b_{s,j} }{z^s } ,\quad j=1,2,\ldots,n
\end{equation}
(see, for example, \cite{Immink1988}). The constants $\lambda_j$, $\mu_j$, and the coefficients $b_{s,j}$ are determined by substituting the formal series into the difference equation and matching coefficients, after freely assigning the initial term $b_{0,j}$.

Immink \cite{Immink1988} showed that each formal series in \eqref{formalseries} is Borel summable, and that its Borel transform (inverse Laplace transform) possesses infinitely many singularities in the complex Borel plane. These singularities give rise to infinitely many exponentially small contributions to the full asymptotic expansion of an exact solution, with \eqref{formalseries} representing the dominant level.

Here, we consider formal solutions in the form of inverse factorial series:
\begin{equation}\label{invfacseries}
\lambda_j^{-z} \sum_{s = 0}^\infty  a_{s,j} \Gamma \left(z + \mu _j  - s\right),\quad j=1,2,\ldots,n.
\end{equation}
Inverse factorial series play a significant role in the asymptotic analysis of integrals and differential equations, where they naturally arise as expansions for high-order coefficients and as tools for the numerical computation of connection coefficients, also called Stokes multipliers (see, e.g., \cite{Bennett2018,OldeDaalhuis1998b,Shelton2025}). More recently, they have also been shown to provide natural asymptotic representations for certain special functions of hypergeometric type \cite{Nemes2020,Nemes2025}. We remark that solutions of \eqref{diffeq} can also be expressed using (normal) factorial series \cite{Harris1963}, which converge, though slowly, in half-planes.

Substituting into the left-hand side of \eqref{diffeq} using \eqref{fexpansions} and \eqref{invfacseries}, and applying the known inverse factorial expansion for the ratio of gamma functions given in \cite[Eq.~\href{http://dlmf.nist.gov/5.11.E19}{(5.11.19)}]{DLMF}, we equate the coefficients of $\Gamma\left(z+n + \mu_j - s\right)$ to zero. This yields a hierarchy of equations determining the parameters $\lambda_j$, $\mu_j$, and the coefficients $a_{s,j}$. The characteristic equation
\begin{equation}\label{chareq}
\sum_{k = 0}^{n} f_{0,n-k} \lambda_j^k  = 0,
\end{equation}
with $f_{0,n}=1$, determines the values of $\lambda_j$. We impose the condition that
\[
\lambda_j \ne \lambda_\ell, \quad \ell\neq j.
\]
The corresponding constants $\mu_j$ are obtained from
\begin{equation}\label{mueq}
\mu_j  = 1-n+ \cfrac{\displaystyle\sum_{k = 1}^n f_{1,n-k} \lambda_j^{k} }{\displaystyle \sum_{k = 0}^{n} k f_{0,n-k} \lambda_j^k }=1-n- \cfrac{\displaystyle\sum_{k=1}^n f_{1,n-k} \lambda_j^k}{\displaystyle \prod_{\ell \neq j} \left( 1 - \frac{\lambda_j}{\lambda_\ell} \right)}.
\end{equation}
We also assume that the constants $\mu_j$ are non-integers. If this assumption fails, the independent variable can be shifted by replacing $z$ with $z + q$, choosing $q$ such that each $\mu_j + q$ becomes non-integer.  Finally, the coefficients $a_{s,j}$ satisfy the recurrence relation
\begin{gather}\label{arec}
\begin{split}
& s \prod_{\ell \ne j} \left( 1 - \frac{\lambda_j}{\lambda_\ell} \right) a_{s,j} \\ & = \sum_{m = 2}^{\min(n,s + 1)} a_{s - m + 1, j} 
\sum_{k = 1}^{n} \lambda_j^k 
\sum_{r = \max(0, m - k)}^{m} 
(-1)^r \frac{ \left( -n - \mu_j + s - r + 2 \right)_r  (m-k - r)_r }{r!} 
f_{m - r, n - k},
\end{split}
\end{gather}
valid for $s \ge 1$. The initial coefficient $a_{0,j}$ may be chosen freely. Here, $(\mu)_r = \Gamma(\mu + r)/\Gamma(\mu)$ denotes the Pochhammer symbol.

We note that an expansion of the form \eqref{formalseries} can always be transformed into one of the form \eqref{invfacseries}, and vice versa, with $\lambda_j$ and $\mu_j$ remaining unchanged. By this correspondence and the existing theory in the literature (see, for instance, \cite{Braaksma1979,Immink1988}), it is known that there exists a set of $n$ linearly independent solutions to the difference equation \eqref{diffeq} that are represented asymptotically by the formal series \eqref{invfacseries} as $\Re(z) \to +\infty$. In general, however, these solutions are not uniquely determined by their asymptotic expansions.

Olde Daalhuis \cite{OldeDaalhuis2004} studied inverse factorial series solutions to second-order linear difference equations of the form \eqref{diffeq}, under the milder assumption that $z^{-2} f_0(z)$ and $z^{-1} f_1(z)$ are analytic in a neighbourhood of infinity. He showed that the formal series \eqref{invfacseries} are Borel summable via Mellin-type integral transforms, and that their Borel transforms have only three singularities---located at $\lambda_1$, $\lambda_2$, and $0$---rather than infinitely many. The singularities at $\lambda_1$ and $\lambda_2$ are simple, corresponding to the two formal series solutions of the form \eqref{invfacseries} to equation \eqref{diffeq}. The singularity at the origin, however, is more intricate and generally more difficult to analyse. In the case where $\lambda_1$ is closer to $\lambda_2$ than to the origin, he showed that, after truncating the inverse factorial series \eqref{invfacseries} with $j = 1$ at its numerically least term, the remainder can be re-expanded into a new series whose coefficients are $a_{s,2}$. This results in an exponentially improved asymptotic expansion that uniquely determines an exact solution. A similar result holds for the second solution corresponding to $j = 2$. Furthermore, he derived asymptotic expansions for the coefficients $a_{s,j}$ as $s \to +\infty$, which can be used to compute the connection coefficients that appear in the exponentially improved asymptotic expansions. The assumption regarding the distance between $\lambda_1$ and $\lambda_2$ allows one to sidestep the complication of analysing the Borel transforms at the origin.

The analysis presented in \cite{OldeDaalhuis2004} can be extended to the higher-order case. The formal series \eqref{invfacseries} remain Borel summable via Mellin-type integral transforms, and their Borel transforms exhibit $n+1$ singularities located at $\lambda_1, \lambda_2, \ldots, \lambda_n$, and $0$. However, when $n\ge 3$, the exponentially improved asymptotic expansions typically no longer determine the solutions uniquely. In this paper, we address this issue by developing hyperasymptotic expansions for the solutions---refinements of the exponentially improved expansions---which not only ensure uniqueness but also provide increasingly accurate approximations. With each level of re-expansion, however, increasingly stricter conditions would be imposed on the relative positions of the singularities $\lambda_1, \lambda_2, \ldots, \lambda_n$ with respect to the origin. For this reason, we have restricted attention to the case where the coefficient functions $f_k(z)$ are polynomials. As we will see, this ensures that the Borel transforms of the inverse factorial series \eqref{invfacseries} remain analytic at the origin. We note that inverse factorial series solutions of second-order difference equations with polynomial coefficients were also studied by Olver \cite{Olver2000}.

Hyperasymptotic expansions were originally introduced by Berry and Howls \cite{BerryHowls1990} to refine WKB approximations for Schr\"odinger-type differential equations, incorporating exponentially small contributions. Their approach was entirely formal, based on the re-summation of the inverse factorial series for the WKB coefficients originally derived by Dingle \cite{Dingle1973}. Subsequent work extended hyperasymptotic analysis to single integrals with saddle points \cite{BerryHowls1991,Howls1992,Bennett2018} and to multidimensional integrals \cite{Howls1997,DelabaereHowls2002} within a rigorous mathematical framework. For multidimensional integrals, hyperasymptotics offered an effective solution to the global connection problem, where the geometric methods applicable to single integrals fail. Parallel approaches for differential equations, using the Cauchy--Heine method and Borel transforms, were developed by Olde Daalhuis and Olver \cite{OldeDaalhuisOlver1995,OldeDaalhuis1998b,OldeDaalhuis2005a,OldeDaalhuis2005b}. Again, hyperasymptotics enabled the numerical computation of Stokes multipliers (or connection coefficients)---often treated as unknowns in other methods---with arbitrary precision.

One notable difference between the hyperasymptotic expansions presented here and those studied previously is that, in earlier works, the region of validity expanded with successive re-expansions, whereas in our case it remains the same half-plane.

Hyperasymptotic expansions are expressed in terms of a universal family of special functions known as hyperterminants, which apply equally to problems arising from integrals and differential equations alike. Olde Daalhuis \cite{OldeDaalhuis1998a} developed efficient methods for computing these hyperterminants, which made hyperasymptotic expansions numerically feasible in practical applications.
The expansions presented in this paper are formulated in terms of hyperterminant functions closely related to these.

The remainder of the paper is organised as follows. In Section \ref{sec2}, we introduce the Borel transforms of the inverse factorial series \eqref{invfacseries} and their associated functions, and establish their fundamental properties. In Section \ref{sec3}, we develop hyperasymptotic refinements of the inverse factorial expansions using the Borel transforms and associated functions introduced in the previous section. Section \ref{sec4} discusses how the connection coefficients appearing in these hyperasymptotic expansions can be computed numerically via hyperasymptotic expansions for the late coefficients. In Section \ref{sec5}, we derive explicit error bounds for the inverse factorial expansions. Section \ref{sec6} illustrates the results with the Gauss hypergeometric function with a large third parameter and a third-order difference equation. The paper concludes with a discussion in Section \ref{sec7}.

\section{The Borel transforms and their associated functions}\label{sec2}

In this section, we introduce the Borel transforms of the inverse factorial series \eqref{invfacseries}, along with their associated functions, and establish their fundamental properties.

Before presenting the main results, we introduce some notation, which closely follows that of \cite{OldeDaalhuis1998b}. We define 
\[
\theta_{j,\ell} = \arg\left(\lambda_\ell - \lambda_j\right).
\]
We consider the complex $t$-plane, equipped with cuts from each $\lambda_j$ extending to infinity along the ray $\arg\left(t - \lambda_j\right) = \eta$ for a fixed real number $\eta$. This direction $\eta$ can be chosen arbitrarily, provided that no cut intersects any other singularity $\lambda_\ell$ with $\ell \ne j$; that is, we require
\[
 \eta \ne \theta_{j,\ell} \bmod{2\pi}, \quad 1 \leq j,\ell \leq n, \quad \ell \ne j.
\]
Any such choice of $\eta$ is called admissible.

For each $j = 1, 2, \dots, n$, we define the logarithm on the ray $\arg\left(t - \lambda_j\right) = \eta$ by
\[
\log\left(t - \lambda_j\right) = \log \left|t - \lambda_j\right| + \im(\eta-0).
\]
We denote the $t$-plane with these cuts and corresponding branches of the logarithm by $\mathcal{P}_\eta$. In this domain, each $\log\left(t - \lambda_j\right)$ is single-valued and consistent with the definition above along the ray $\arg\left(t - \lambda_j\right) = \eta$.

\begin{remark} In \cite{OldeDaalhuis1998b}, the author specifies the branch of the logarithm by requiring that $\log\left(t - \lambda_j\right) = \log \left|t - \lambda_j\right| + \im(\eta + 2\pi - 0)$. Since we adopt a different branch of the logarithm, the definitions of the loop contours and the connection coefficients will differ from the corresponding ones in the paper \cite{OldeDaalhuis1998b}.
\end{remark}

Given an admissible $\eta$, we define
\begin{gather}\label{etapm}
\begin{split}
\eta^- &= \inf\left\{ \widetilde{\eta} < \eta \mid \widetilde{\eta} \text{ is admissible for all } \widetilde{\eta} \in (\widetilde{\eta}, \eta] \right\}, \\
\eta^+ &= \sup\left\{ \widehat{\eta} > \eta \mid \widehat{\eta} \text{ is admissible for all } \widetilde{\eta} \in [\eta, \widehat{\eta}) \right\}, \\
\mathcal{I}_\eta &= \left(\eta^-, \eta^+\right).
\end{split}
\end{gather}
Note that the endpoints $\eta^-$ and $\eta^+$ themselves are not admissible.

Throughout, we also use the notation
\[
\int^{\left[\varphi\right]} = \int^{+\infty \e^{\im\varphi }}, \quad \varphi \in \mathbb{R}.
\]

We are now in a position to present the main results of this section, with proofs provided at the end. The following proposition introduces the Borel transforms of the inverse factorial series \eqref{invfacseries} and establishes their fundamental properties.

\begin{proposition}\label{prop1} Let $\eta \in \mathbb R$ be an admissible direction, and let $\mu_j$ $(1\le j\le n)$ be given by \eqref{mueq}. For each $1\le j\le n$, the function $y_j(t,\eta)$, defined by
\[
y_j (t,\eta) = \sum_{s = 0}^\infty  a_{s,j} \Gamma \left(\mu _j-s\right)\left( 1 - \frac{t}{\lambda _j } \right)^{s - \mu _j } ,\quad \left| t - \lambda _j \right| < \min_{\ell \ne j} \left|  \lambda _j  - \lambda _\ell \right|,
\]
is analytic in $\mathcal{P}_\eta$, satisfies the relation
\begin{equation}\label{connection}
y_j (t,\eta) = K_{\ell,j}\frac{2\pi \im}{ \e^{2\pi \im\mu _j } -1}y_\ell  (t,\eta) + \reg\left(t - \lambda _\ell \right),\quad \ell  \ne j,
\end{equation}
where the $K_{\ell,j }$ are constants, and admits endless analytic continuation along any path that does not contain any of the points $\lambda_1, \lambda_2, \ldots, \lambda_n$. Moreover, if $S$ is a sector in the complex $t$-plane of the form
\begin{equation}\label{Ssec}
S=\left\{ |t|>R, \;\alpha<\arg t<\beta\right\}
\end{equation}
with $0<\beta-\alpha<2\pi$ and $R>\max_\ell|\lambda_\ell|$, then for any fixed analytic continuation of $y_j(t,\eta)$ into the entire sector $S$, we have
\begin{equation}\label{Borellimit}
\lim_{t \to \infty} t^{-(a + \varepsilon)} y_j(t,\eta) = 0, \quad t \in S,
\end{equation}
for any $\varepsilon>0$, where $a$ denotes the real part of the root of the equation $f_0 (z) = 0$ with the largest real part.
\end{proposition}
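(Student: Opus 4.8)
The plan is to build $y_j(t,\eta)$ as a Mellin--Borel transform of the inverse factorial series \eqref{invfacseries} and then read off its analytic properties from the recurrence \eqref{arec}. Concretely, starting from the series $\lambda_j^{-z}\sum_{s\ge 0}a_{s,j}\Gamma(z+\mu_j-s)$, I would write each term as a Mellin-type integral, $\Gamma(z+\mu_j-s)=\int_0^\infty u^{z+\mu_j-s-1}\e^{-u}\id u$ after a change of variable that converts $u$ into the Borel variable $t$ centred at $\lambda_j$; summing formally over $s$ produces the claimed local expansion $\sum_{s\ge 0}a_{s,j}\Gamma(\mu_j-s)(1-t/\lambda_j)^{s-\mu_j}$ in the disc $|t-\lambda_j|<\min_{\ell\ne j}|\lambda_j-\lambda_\ell|$, its radius of convergence being governed by the factor $\prod_{\ell\ne j}(1-\lambda_j/\lambda_\ell)$ sitting in the denominator of \eqref{arec}. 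This identifies $y_j$ near $\lambda_j$; the first task is then to show this local function continues analytically throughout $\mathcal P_\eta$.

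For the endless continuation and the singularity structure I would substitute the (convergent, near $\lambda_j$) expression for $y_j(t,\eta)$ into the difference equation \eqref{diffeq} — equivalently, derive the linear ODE (or finite-difference relation in $t$) with polynomial coefficients that $y_j$ must satisfy as a consequence of \eqref{arec} and the polynomial form \eqref{fexpansions} of the $f_k$. Because the coefficients $f_k(z)$ are polynomials, the equation for $y_j(t,\eta)$ is an ODE with polynomial coefficients whose only finite singular points are $\lambda_1,\dots,\lambda_n$ (the origin being an ordinary point, exactly the gain from restricting to polynomial coefficients, as the excerpt emphasises). Standard ODE theory then gives analytic continuation along any path avoiding $\{\lambda_1,\dots,\lambda_n\}$, hence endless analytic continuation, and local analysis at $\lambda_\ell$ shows the branching is of the type $(t-\lambda_\ell)^{-\mu_\ell}$ times an analytic function plus an analytic (regular) remainder — this is the content of \eqref{connection}, with $K_{\ell,j}$ the connection constant between the solution $y_j$ continued to $\lambda_\ell$ and the canonical local solution $y_\ell$; the normalising factor $2\pi\im/(\e^{2\pi\im\mu_j}-1)$ is just the standard one relating a loop integral around a branch point of order $-\mu_j$ to the jump across the cut.

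For the growth estimate \eqref{Borellimit}, the point is that a fixed analytic branch of $y_j$ on a sector $S=\{|t|>R,\ \alpha<\arg t<\beta\}$ is a solution of the same polynomial-coefficient ODE, and near $t=\infty$ the indicial/Newton-polygon data of that ODE are controlled by the characteristic equation \eqref{chareq} and by \eqref{mueq}; more directly, the connection formula \eqref{connection} lets one express the behaviour of $y_j$ at infinity in terms of the $y_\ell$ whose branch points lie on the relevant rays, and the exponents that appear are precisely (shifts of) the roots of $f_0(z)=0$. I would therefore show that along any ray in $S$ the function $y_j(t,\eta)$ grows no faster than $|t|^{a}$ times logarithms, where $a$ is the largest real part among the roots of $f_0(z)=0$; the extra $\varepsilon$ absorbs these logarithmic and bounded-oscillatory factors, giving $t^{-(a+\varepsilon)}y_j(t,\eta)\to 0$. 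The main obstacle I anticipate is this last step: keeping uniform control of the analytic continuation of $y_j$ as $t\to\infty$ across a sector that may wrap almost all the way around (since $0<\beta-\alpha<2\pi$ is allowed, the sector can enclose several of the $\lambda_\ell$), so that the growth bound is genuinely uniform in $\arg t$ and not merely ray-by-ray. I would handle this by combining finitely many applications of \eqref{connection} to move the base point of the continuation past each enclosed $\lambda_\ell$, together with the local expansions at infinity coming from the ODE, to patch together a uniform $O(|t|^{a+\varepsilon})$ bound on all of $S$.
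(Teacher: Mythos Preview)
Your approach is essentially the same as the paper's: both derive a linear Fuchsian ODE satisfied by $y_j(t,\eta)$, with regular singular points only at $\lambda_1,\ldots,\lambda_n$ and $\infty$, and then read off the analytic continuation, the connection formula, and the growth at infinity from standard Fuchsian theory.

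Two points of comparison are worth making. First, the paper derives the ODE directly from a Mellin-type integral ansatz $w_j(z)=-\frac{\Gamma(z+1)}{2\pi\im}\int_{\gamma_j(\eta)}t^{-z-1}y_j(t)\,\id t$: integrating by parts converts the action of the difference operator on $w_j$ into a differential operator on $y_j$, and one reads off explicitly $g_n(t)=\prod_m(1-t/\lambda_m)$ and the indicial exponents $0,1,\ldots,n-2,-\mu_j$ at each $\lambda_j$. This is more efficient than your proposal of extracting the ODE from the recurrence \eqref{arec}, and it makes immediately visible that the equation is Fuchsian. Second, for the growth estimate \eqref{Borellimit} you are overcomplicating things: there is no need to chain the connection formula around successive $\lambda_\ell$ or to worry about uniformity in $\arg t$. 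The paper simply observes that $\infty$ is a regular singular point of the ODE and that the roots of the indicial equation there coincide with the roots of $f_0(z)=0$; standard Fuchsian theory then gives $y_j(t,\eta)=\mathcal O(t^a\log^{M-1}t)$ uniformly in any sector $S$, for any fixed branch, which immediately yields \eqref{Borellimit}. Your anticipated ``main obstacle'' is therefore a non-issue once you recognise that the ODE is Fuchsian at infinity.
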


In \eqref{connection}, $\reg\left(t - \lambda_\ell\right)$ denotes a function that is analytic in a neighbourhood of $t=\lambda_\ell$. The constant preceding $y_\ell(t,\eta)$ in \eqref{connection} is expressed in this seemingly complicated way to simplify the form of the hyperasymptotic expansions presented in the subsequent section. The constants $K_{\ell,j}$, referred to as connection coefficients or Stokes multipliers, can be computed using the numerical method outlined in Section \ref{sec4} and developed in \cite{OldeDaalhuis1998b}.

In the next proposition, we introduce functions associated with the Borel transforms $y_j (t,\eta)$. These are Laplace-type transforms of the functions $y_j (t,\eta)$ and will be used to derive the hyperasymptotic expansions.

\begin{proposition}\label{prop2} Let $\eta \in \mathbb R$ be an admissible direction. For each $1\le j\le n$, define
\begin{equation}\label{Yint} 
Y_j (u,\eta ) = \frac{1}{2\pi\im}\int_{\gamma _j (\eta )} \e^{u t} y_j (t,\eta)\id t,
\end{equation}
where $\gamma_j(\eta)$ is a contour in $\mathcal{P}_\eta$ that starts at infinity along the left-hand side of the branch cut $\arg\left(t-\lambda_j\right)=\eta$, encircles $\lambda_j$ in the positive (counter-clockwise) direction, and returns to infinity along the right-hand side of the same cut. Along the left-hand side of the branch cut, we have $\arg t = \eta-2\pi$, while along the right-hand side $\arg t = \eta$. The integral in \eqref{Yint} converges for
\[
u \in \mathscr{S}(\eta ) = \left\{ \tfrac{\pi }{2} - \eta  < \arg u < \tfrac{3\pi}{2} - \eta  \right\}
\]
and defines an analytic function that satisfies $Y_j(u, \eta) = Y_j(u, \widetilde{\eta})$ for all $\widetilde{\eta} \in \mathcal{I}_\eta$ and $u\in \mathscr{S}(\eta )\cap \mathscr{S}(\widetilde{\eta} )$. Moreover,
\begin{equation}\label{Ylarge}
Y_j(u,\eta)\sim \e^{\lambda _j u} u^{ - 1} \sum_{s = 0}^\infty  a_{s,j} \left( \lambda _j u\e^{ - \pi \im} \right)^{\mu _j  - s} 
\end{equation}
as $u \to \infty$ within $\mathscr{S}(\eta)$, and
\begin{equation}\label{Ysmall}
Y_j (u,\eta ) =\mathcal{O}\left(u^{-a-1-\varepsilon}\right)
\end{equation}
as $|u| \to 0$ in $\mathscr{S}(\eta)$, for any $\varepsilon > 0$, where $a$ is as defined in Proposition \ref{prop1}.
\end{proposition}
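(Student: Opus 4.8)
The plan is to establish all four assertions directly from the loop‑integral representation \eqref{Yint}, by deforming and rescaling the contour $\gamma_j(\eta)$ and invoking the properties of $y_j(t,\eta)$ from Proposition \ref{prop1}: the convergent local expansion at $\lambda_j$, the endless analytic continuation, and the polynomial growth bound \eqref{Borellimit}. For convergence and analyticity, parametrise the two edges of the cut by $t=\lambda_j+r\e^{\im\eta}$; then $|\e^{ut}|=\e^{\Re(u\lambda_j)}\e^{r|u|\cos(\arg u+\eta)}$, and $\arg u+\eta\in(\tfrac{\pi}{2},\tfrac{3\pi}{2})$ exactly when $u\in\mathscr{S}(\eta)$, so the exponential factor decays like $\e^{-cr|u|}$ along the tails. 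Since those tails eventually lie in a thin sector (free of the other cuts, by admissibility), \eqref{Borellimit} gives $|y_j(t,\eta)|=O(|t|^{a+\varepsilon})$ there, and the integral converges absolutely and locally uniformly in $u$; near $\lambda_j$ the contour avoids $\lambda_j$ and $y_j$ has at most the algebraic branch point $(1-t/\lambda_j)^{-\mu_j}$, so that part is harmless. Differentiating under the integral sign (or Morera's theorem) then yields analyticity on $\mathscr{S}(\eta)$.

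For the independence of $\eta$, fix $\widetilde\eta\in\mathcal{I}_\eta$ and $u\in\mathscr{S}(\eta)\cap\mathscr{S}(\widetilde\eta)$, and rotate $\gamma_j(\eta)$ into $\gamma_j(\widetilde\eta)$. Every direction $\eta''$ between $\eta$ and $\widetilde\eta$ is admissible, so the swept region contains none of $\lambda_1,\dots,\lambda_n$ and $y_j$ remains analytic there; moreover $\arg u+\eta''\in(\tfrac{\pi}{2},\tfrac{3\pi}{2})$ for every such $\eta''$ (this is precisely what $u\in\mathscr{S}(\eta)\cap\mathscr{S}(\widetilde\eta)$ buys), so on the connecting arcs at infinity $\e^{ut}$ decays exponentially while $y_j$ grows at most polynomially, and those arcs contribute nothing in the limit. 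Cauchy's theorem then gives $Y_j(u,\eta)=Y_j(u,\widetilde\eta)$.

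For \eqref{Ylarge} I would use a loop‑integral version of Watson's lemma. Substituting $w=1-t/\lambda_j$ turns $\e^{ut}$ into $\e^{\lambda_j u}\e^{-\lambda_j u w}$ and $\gamma_j(\eta)$ into a Hankel loop around $w=0$; a further rescaling $\sigma=\lambda_j u w$, which for $u\in\mathscr{S}(\eta)$ produces a contour wrapping the positive real axis, shows that each monomial integral $\int_{\gamma_j(\eta)}\e^{ut}(1-t/\lambda_j)^{s-\mu_j}\id t$ evaluates in closed form via the Hankel representation of $1/\Gamma$, giving $\frac{2\pi\im\,\e^{\lambda_j u}}{u\,\Gamma(\mu_j-s)}(\lambda_j u\e^{-\pi\im})^{\mu_j-s}$, the phase $\e^{-\pi\im}$ recording the argument of $\sigma$ on the contour. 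Inserting the convergent expansion of $y_j$ at $\lambda_j$ and summing reproduces the right‑hand side of \eqref{Ylarge}. For the remainder, truncate that expansion after $N$ terms; the Hankel tails then contribute a quantity exponentially smaller than $\e^{\lambda_j u}$, while the loop part, after the rescaling $\sigma=\lambda_j u w$, is $O(|\e^{\lambda_j u}|\,|u|^{\Re\mu_j-N-1})$, that is, of the order of the first omitted term.

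The estimate \eqref{Ysmall} is the crux. Using $\sigma=u(\lambda_j-t)$ one has
\[
Y_j(u,\eta)=-\frac{\e^{\lambda_j u}}{2\pi\im\,u}\int_{\gamma_\sigma}\e^{-\sigma}\,y_j\!\left(\lambda_j-\frac{\sigma}{u},\eta\right)\id\sigma,
\]
where $\gamma_\sigma$ is a Hankel contour around $\sigma=0$ (wrapping $\mathbb{R}_{\ge 0}$ for $u\in\mathscr{S}(\eta)$) whose loop radius is proportional to $|u|$ and hence shrinks as $|u|\to0$. On the part of $\gamma_\sigma$ away from the origin — which for small $|u|$ is all but a shrinking disc and corresponds to large $|t|$ — the bound $|y_j(\lambda_j-\sigma/u,\eta)|=O((|\sigma|/|u|)^{a+\varepsilon})$ from \eqref{Borellimit}, combined with the exponential decay of $\e^{-\sigma}$ on the tails (so that $\int|\e^{-\sigma}||\sigma|^{a+\varepsilon}|\d\sigma|$ over a Hankel contour is finite), yields a contribution of order $|u|^{-a-1-\varepsilon}$ to $Y_j$. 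The subtle point, which I expect to be the main obstacle, is the contribution of the neighbourhood of $\lambda_j$, where $y_j$ carries the algebraic singularity $\propto(1-t/\lambda_j)^{-\mu_j}$: a crude absolute‑value bound of the loop is not sharp enough, and one must instead exploit the cancellation built into the Hankel contour — integrating the convergent local expansion of $y_j$ term by term against $\e^{ut}$ through the $1/\Gamma$ formula, exactly as for \eqref{Ylarge} — to see that this part is $O(|u|^{\Re\mu_j-1})$ and is absorbed into the stated bound. This contour‑rescaling (Abelian‑type) argument, and in particular the handling near $\lambda_j$, mirrors the analysis of the corresponding statements for second‑order difference equations in \cite{OldeDaalhuis2004} and for differential equations in \cite{OldeDaalhuis1998b}.
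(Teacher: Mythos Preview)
Your treatment of convergence, the independence of $\eta$, and the large-$u$ expansion \eqref{Ylarge} is essentially what the paper does: parametrise the tails, use the polynomial bound \eqref{Borellimit} for absolute convergence, rotate the contour within $\mathcal{I}_\eta$ using Cauchy's theorem, and apply Watson's lemma for loop integrals after the shift $t\mapsto t+\lambda_j$.

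The gap is in your argument for \eqref{Ysmall}. Two things go wrong. First, the claim that the near-$\lambda_j$ contribution is $O(|u|^{\Re\mu_j-1})$ is not established by the method you describe: the Hankel-integral identity you invoke from the large-$u$ analysis evaluates, for each $s$, the \emph{full} Hankel integral of $(1-t/\lambda_j)^{s-\mu_j}$, not just the small loop, and the resulting term $u^{-1}(\lambda_j u)^{\mu_j-s}$ grows without bound in $s$ as $|u|\to 0$, so the term-by-term sum diverges in this regime. Second, even if one could show that the loop contributes $O(|u|^{\Re\mu_j-1})$, there is no reason this is dominated by $O(|u|^{-a-1-\varepsilon})$: the exponents $\mu_j$ are determined by \eqref{mueq} and the exponent $a$ by the roots of $f_0$, and these are in general unrelated. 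More conceptually, the exponent $-a-1$ is a \emph{global} datum of the problem (it comes from the indicial equation of the Borel transforms at $t=\infty$), and a local analysis near $t=\lambda_j$ cannot produce it.

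The paper bypasses this entirely by a different mechanism: it derives a linear ODE in $u$ satisfied by $Y_j(u,\eta)$ (obtained from \eqref{ydiffeq} via repeated integration by parts and differentiation under the integral sign), shows that $u=0$ is a regular singular point, and computes its indicial polynomial explicitly to be $f_0(-z-1)$ via the Chu--Vandermonde identity. The roots of this indicial equation are therefore $-1-(\text{roots of }f_0)$, which immediately gives $Y_j(u,\eta)=O(u^{-a-1}\log^{M-1}u)$ and hence \eqref{Ysmall}. This is the missing idea: the small-$u$ behaviour is read off from the local theory of an auxiliary ODE at $u=0$, not from a direct Abelian estimate on the integral.
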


The following proposition presents exact analytic solutions of the difference equation \eqref{diffeq}, expressed either directly through Mellin-type integrals of the Borel transforms $y_j(t,\eta)$, or alternatively through Mellin transforms of the associated functions $Y_j(u, \eta)$. The large-$\Re(z)$ behaviour of these solutions are described by the inverse factorial series \eqref{invfacseries}.

\begin{proposition}\label{prop3} Let $\eta \in \mathbb{R}$ be an admissible direction such that $\left| \eta - \arg \lambda_j \right| < \pi$ and $- \frac{\pi}{2} + \eta^- < \arg \lambda _j  < \frac{\pi}{2} + \eta^+$, where $\eta_\pm$ are defined in \eqref{etapm}. For each $1 \le j \le n$, define
\begin{equation}\label{wdef}
w_j (z,\eta) =  - \frac{\Gamma (z + 1)}{2\pi \im}\int_{\gamma_j(\eta)} t^{ - z - 1} y_j (t,\eta)\id t, \quad \Re(z)>\max(-1,a),
\end{equation}
where $\gamma_j(\eta)$ is the contour specified in Proposition \ref{prop2}, subject to the additional condition that it does not encircle the origin. Then $w_j(z,\eta)$ is a solution of the difference equation \eqref{diffeq}, and satisfies $w_j(z,\eta) = w_j(z,\widetilde{\eta})$ for all $\widetilde{\eta} \in \mathcal{I}_\eta$. Moreover,
\begin{equation}\label{wint}
w_j(z, \eta) = \e^{-\pi \im z} \int_0^{[\pi-\omega]} u^z Y_j(u, \eta) \id u, \quad \Re(z)>a,
\end{equation}
where $\omega$ is an arbitrary angle satisfying $\left| \eta - \omega \right| < \frac{\pi}{2}$ and $\left| \arg \lambda_j - \omega \right| < \frac{\pi}{2}$. Finally,
\begin{equation}\label{winvfact}
w_j (z,\eta)\sim \lambda_j^{-z} \sum_{s = 0}^\infty  a_{s,j} \Gamma \left(z + \mu _j  - s\right),
\end{equation}
as $\Re(z) \to +\infty$, with $\Im(z)=\mathcal{O}\big(\sqrt{\Re(z)}\big)$.
\end{proposition}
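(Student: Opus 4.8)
\emph{Proof proposal.} The plan is to establish the three assertions of Proposition \ref{prop3} in sequence: that $w_j(z,\eta)$ solves \eqref{diffeq}, that it admits the alternative representation \eqref{wint}, and that it satisfies the inverse factorial expansion \eqref{winvfact}; the independence of $\eta$ will drop out of the contour manipulations used along the way.

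\emph{The difference equation.} I would substitute \eqref{wdef} into the left-hand side of \eqref{diffeq}, written with $f_n(z)\equiv 1$ so that every term has the uniform shape $f_k(z)\,w_j(z+k,\eta)$ with $0\le k\le n$. Using the falling-factorial expansion \eqref{fexpansions} together with the elementary identity
\[
\frac{\Gamma(z+k+1)}{\Gamma(z-n+k+m+1)}\,t^{-z-k-1}=(-1)^{n-m}\Bigl(\frac{\d}{\d t}\Bigr)^{n-m}t^{-z-k-1+n-m},
\]
each term turns into an integral over $\gamma_j(\eta)$ of a derivative of a power of $t$ against $y_j(t,\eta)$. Integrating by parts $n-m$ times and summing, the whole expression collapses to a constant multiple of $\int_{\gamma_j(\eta)}t^{-z-1}\,G_j(t,\eta)\id t$, where $G_j(t,\eta)=\sum_{k=0}^{n}\sum_{m=0}^{n-k}f_{m,k}\,t^{\,n-k-m}\,y_j^{(n-m)}(t,\eta)$ is the image of the difference operator under the Mellin--Borel correspondence, a linear differential operator whose leading coefficient is the characteristic polynomial \eqref{chareq} evaluated at $t$. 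Since $y_j(t,\eta)$ is, by its construction in Section \ref{sec2} --- equivalently, by the recurrence \eqref{arec} and the choice \eqref{mueq} of $\mu_j$, which together make the potential branch point of $G_j$ at $t=\lambda_j$ disappear --- a solution of this differential equation, we have $G_j\equiv 0$, and hence $w_j(z,\eta)$ solves \eqref{diffeq}. The delicate point is the vanishing of the boundary contributions produced by the repeated integration by parts: at the two ends of $\gamma_j(\eta)$ these are of order $t^{-z-k+a+\varepsilon}$, so they vanish exactly when $\Re(z)>a$ (with $\varepsilon$ chosen small enough), which, combined with avoiding the poles of $\Gamma(z+1)$, accounts for the stated range $\Re(z)>\max(-1,a)$; to estimate the derivatives $y_j^{(p)}(t,\eta)$ near infinity one feeds \eqref{Borellimit} into Cauchy's inequality on discs contained in a slightly larger sector, and near $t=\lambda_j$ there is nothing to check because $\gamma_j(\eta)$ is a loop.

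\emph{The representation \eqref{wint} and $\eta$-independence.} Here I would write $t^{-z-1}$ as a rotated Laplace integral, $t^{-z-1}=\frac{c_z}{\Gamma(z+1)}\int_0^{[\psi]}u^z\e^{ut}\id u$ for a suitable direction $\psi$ and an explicit phase $c_z$, valid for $t$ on the far parts of $\gamma_j(\eta)$; inserting this into \eqref{wdef}, interchanging the two integrations (legitimate by absolute convergence once $\Re(z)$ is large, the general case following by analytic continuation), and recognising the inner $t$-integral as $2\pi\im\,Y_j(u,\eta)$ from \eqref{Yint} yields \eqref{wint}. The hypotheses $|\eta-\arg\lambda_j|<\pi$ and $-\tfrac{\pi}{2}+\eta^-<\arg\lambda_j<\tfrac{\pi}{2}+\eta^+$ are exactly what is needed to choose $\psi$, equivalently an $\omega$ with $|\eta-\omega|<\tfrac{\pi}{2}$ and $|\arg\lambda_j-\omega|<\tfrac{\pi}{2}$, while keeping $\gamma_j(\eta)$ clear of the origin; the factor $\e^{-\pi\im z}$ in \eqref{wint} emerges from tracking the branch of $(-t)^{-z-1}$ against the branch of $\log(t-\lambda_j)$ fixed in $\mathcal{P}_\eta$. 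The identity $w_j(z,\eta)=w_j(z,\widetilde{\eta})$ for $\widetilde{\eta}\in\mathcal{I}_\eta$ then follows either by deforming $\gamma_j(\eta)$ continuously into $\gamma_j(\widetilde{\eta})$ --- no $\lambda_\ell$ is encountered as $\eta$ ranges over $\mathcal{I}_\eta$, and the arcs at infinity contribute nothing by \eqref{Borellimit} --- or, more quickly, directly from \eqref{wint} and the invariance $Y_j(u,\eta)=Y_j(u,\widetilde{\eta})$ already recorded in Proposition \ref{prop2}.

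\emph{The asymptotic expansion.} Finally, \eqref{winvfact} is a Watson's-lemma-type analysis of \eqref{wint}. I would split the $u$-integral at $|u|=\delta$; the contribution near the origin is uniformly $\mathcal{O}(z^{-a-\varepsilon})$ by \eqref{Ysmall}, while on the remaining part I would insert the expansion \eqref{Ylarge} with an explicit remainder and integrate term by term. Each term is a rotated Eulerian integral, and after using $\int_0^{[\pi-\omega]}u^{z+\mu_j-1-s}\e^{\lambda_j u}\id u=\Gamma(z+\mu_j-s)(-\lambda_j)^{-(z+\mu_j-s)}$ --- convergent because $|\arg\lambda_j-\omega|<\tfrac{\pi}{2}$ forces $\e^{\lambda_j u}$ to decay along the contour --- the phase factors (the $\e^{-\pi\im z}$ of \eqref{wint} and the $\e^{-\pi\im}$'s of \eqref{Ylarge}) collapse to leave precisely $\lambda_j^{-z}a_{s,j}\Gamma(z+\mu_j-s)$. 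I expect the genuine obstacle to lie in this last step: one must show that the remainder, after truncating at $N$ terms, is of smaller order than the last retained term $\Gamma(z+\mu_j-N)$, uniformly in $z$. This is exactly where the restriction $\Im(z)=\mathcal{O}\big(\sqrt{\Re(z)}\big)$ is used --- it keeps $|\Gamma(z+\mu_j-s)|$ comparable, up to bounded factors, with $|\Gamma(\Re(z)+\mu_j-s)|$, whereas a larger imaginary part would, through Stirling's formula, introduce an $\e^{-\pi|\Im(z)|/2}$-type decay that would spoil the comparison --- and it is what makes the error estimate uniform. The branch bookkeeping in the second and third steps, though elementary, is the other place that demands care if the phases are to come out exactly as stated.
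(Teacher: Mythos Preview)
Your approach is essentially the paper's: the same integration-by-parts reduction to the Fuchsian ODE for $y_j$, the same Laplace-integral substitution $\Gamma(z+1)t^{-z-1}=-\e^{-\pi\im z}\int_0^{[\pi-\omega]}\e^{tu}u^z\id u$ to obtain \eqref{wint}, the same deformation/Proposition~\ref{prop2} argument for $\eta$-independence, and the same Watson's-lemma computation for \eqref{winvfact} with the $\Im(z)=\mathcal{O}\big(\sqrt{\Re(z)}\big)$ restriction used to compare $\Gamma(\Re(z+\mu_j)-N)$ with $\Gamma(z+\mu_j-N)$.

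One point to correct in the last step: the near-origin contribution after splitting at $|u|=\delta$ is not $\mathcal{O}(z^{-a-\varepsilon})$ but $\mathcal{O}\big(\delta^{\Re(z)-a-\varepsilon}\big)$, which is exponentially small and hence harmless---your argument survives, but the stated estimate is wrong. The paper avoids the split altogether: combining \eqref{Ylarge} with \eqref{Ysmall} it gets a single bound $|\mathsf{R}_j(u,\widetilde\eta;N)|\le A_{N,j}\,|\e^{\lambda_j u}u^{-1}|\,|\lambda_j u|^{\Re(\mu_j)-N}$ valid for \emph{all} $u\in\mathscr{S}(\widetilde\eta)$ provided $N>\max(0,\Re(\mu_j)+a)$, integrates this directly after rotating the contour to $\arg u=\pi-\arg\lambda_j$, and obtains $|\lambda_j^{-z}|A_{N,j}\Gamma(\Re(z+\mu_j)-N)$ in one line. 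This is cleaner than a $\delta$-split and makes transparent why the lower bound $N>\Re(\mu_j)+a$ appears.
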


\begin{remarks}
\noindent
\begin{enumerate}[label=(\roman*), leftmargin=*, labelindent=0pt,itemsep=0.3em]
    \item The restriction $\left| \eta - \arg \lambda_j \right| < \pi$ in Proposition \ref{prop3} ensures that the branch cut from $\lambda_j$ and the contour $\gamma_j(\eta)$ avoid the branch point of the integrand at $t=0$. The condition $- \frac{\pi}{2} + \eta^- < \arg \lambda_j < \frac{\pi}{2} + \eta^+$ is imposed to guarantee the validity of the inverse factorial expansion \eqref{winvfact}. A possible choice for $\omega$ in \eqref{wint} is $\omega = \frac{\eta + \arg \lambda_j}{2}$.
    \item In \cite{OldeDaalhuis2004}, the representation \eqref{wdef} was used to derive exponentially improved asymptotic expansions for solutions of second-order linear difference equations. In contrast, in this paper we use the alternative representation \eqref{wint} to develop hyperasymptotic expansions. This approach has the advantage that the hyperasymptotic expansions for $w_j(z, \eta)$ can be directly related to the hyperasymptotic expansions for Laplace-type integrals (such as \eqref{Yint}) established in \cite{OldeDaalhuis1998b}. Moreover, the polynomial growth of the Borel transforms $y_j(t,\eta)$ allows for certain simplifications compared to \cite{OldeDaalhuis1998b}.   
\end{enumerate}
\end{remarks}

\begin{proof}[Proof of Proposition \ref{prop1}]
We make a Mellin-type integral ``ansatz" to derive a linear differential equation for the Borel transforms $y_j(t)=y_j(t,\eta)$. Specifically, we seek functions $y_j(t)$ such that
\[
w_j (z)=w_j (z,\eta) =  - \frac{\Gamma (z + 1)}{2\pi \im}\int_{\gamma _j (\eta )} t^{ - z - 1} y_j (t)\id t 
\]
satisfies the difference equation \eqref{diffeq}. Note that, for each integer $k \ge 0$,
\[
w_j (z + k) =  - \frac{\Gamma (z+k + 1)}{2\pi \im}\int_{\gamma _j (\eta )} t^{ - z -k- 1} y_j (t)\id t =  - \frac{\Gamma (z + 1)}{2\pi \im}\int_{\gamma _j (\eta )} t^{ - z - 1} \frac{\d^k y_j (t)}{\d t^k }\id t,
\]
where the second equality follows by integrating by parts $k$ times. Consequently,
\begin{multline*}
f_k(z)w_j (z+k) = w_j (z+k)  \sum_{m = 0}^{n-k} f_{m,k} \frac{\Gamma (z + 1)}{\Gamma (z - n + k + m + 1)} \\ = - \frac{\Gamma (z + 1)}{2\pi \im}\int_{\gamma _j (\eta )} t^{ - z - 1} \left( \sum_{m = 0}^{n-k} f_{m,k} t^{n - k - m}\frac{\d^{n - m} y_j (t)}{\d t^{n - m} } \right)\id t.
\end{multline*}
Therefore, the left-hand side of \eqref{diffeq} can be expressed as
\[
w_j(z + n) + \sum_{k = 0}^{n - 1}f_k(z) w_j (z+k)  = - \frac{\Gamma (z + 1)}{2\pi \im}\int_{\gamma _j (\eta )} t^{ - z - 1} \left(  \sum_{m=0}^{n} g_m(t)\frac{\d^m y_j (t)}{\d t^m} \right)\id t,
\]
where
\[
g_m(t)= \sum_{k=0}^{m}  f_{n-m,m-k} t^k,
\]
with the convention $f_{0, n} = 1$. Thus, $w_j(z)$ satisfies the difference equation \eqref{diffeq} if and only if $y_j(t)$ satisfies the linear differential equation
\begin{equation}\label{ydiffeq}
\sum_{m=0}^n g_m(t)\frac{\d^m y_j (t)}{\d t^m} =0.
\end{equation}
From \eqref{chareq}, we obtain
\[
g_n(t)  = \prod_{m=1}^{n} \left( 1 - \frac{t}{\lambda_m} \right).
\]
Using \eqref{mueq} and the definition of $g_{n-1}(t)$, we can apply Lagrange interpolation to find
\[
g_{n-1}(t)  = \sum_{m=1}^{n} \left( \sum_{k=0}^{n-1} f_{1,k} \lambda_m^{-k} \right) \prod_{\substack{j=1 \\ j \ne m}}^n \cfrac{\displaystyle 1 - \frac{t}{\lambda_j}}{\displaystyle\frac{1}{\lambda_m} - \frac{1}{\lambda_j}} 
= \sum_{m=1}^{n} \frac{-\mu_m - n + 1}{\lambda_m} \prod_{\substack{j=1 \\ j \ne m}}^n \left( 1 - \frac{t}{\lambda_j} \right) .
\]
It follows that the differential equation \eqref{ydiffeq} is of Fuchsian type, with regular singular points at $\lambda_1,\lambda_2, \ldots, \lambda_n$ and at $t = \infty$. 

The roots of the indicial equation corresponding to the regular singularity $\lambda_j$, for $1 \le j \le n$, are easily seen to be $0, 1, \dots, n-2$, and $-\mu_j$. Consequently, the differential equation \eqref{ydiffeq} admits $n$ linearly independent solutions of the form
\begin{equation}\label{series}
\sum_{s = 0}^\infty  a_{s,j} \Gamma \left(\mu _j-s\right)\left( 1 - \frac{t}{\lambda _j } \right)^{s - \mu _j } , \quad 1 \le j \le n.
\end{equation}
Substituting these expansions into \eqref{ydiffeq} shows that the coefficients $a_{s,j}$ satisfy the recurrence relation \eqref{arec}.

On the domain $\mathcal{P}_\eta$, for each $1 \le j \le n$, the series in \eqref{series} converges in a neighbourhood of $\lambda_j$ with radius $\min_{\ell \ne j} \left| \lambda_j - \lambda_\ell \right|$, defining an analytic function there. We define the solution $y_j(t,\eta)$ of the differential equation \eqref{ydiffeq} as the single-valued analytic continuation of this function throughout $\mathcal{P}_\eta$. Considering $y_j (t,\eta)$ near $t=\lambda_\ell$ for $\ell \ne j$, we see that there exists a constant $K_{\ell,j}$ such that
\[
y_j (t,\eta) = K_{\ell,j }\frac{2\pi \im}{ \e^{2\pi \im\mu _j } -1}y_\ell  (t,\eta) + \reg\left(t - \lambda _\ell \right),
\]
where $\reg\left(t - \lambda _\ell \right)$ denotes a function analytic at $t = \lambda_\ell$. Finally, since \eqref{ydiffeq} is a linear Fuchsian differential equation, the solution $y_j(t,\eta)$ admits analytic continuation beyond $\mathcal{P}_\eta$ along any path in the complex plane that avoids the singular points $\lambda_1,\lambda_2, \dots, \lambda_n$. The roots of the indicial equation corresponding to the regular singularity at infinity coincide with the roots of the polynomial $f_0(z)$. It follows that, for any fixed analytic continuation of $y_j(t,\eta)$ into a sector $S$ of the form \eqref{Ssec}, we have
\[
y_j (t,\eta) = \mathcal{O}\big( t^{a} \log^{M-1} t \big),
\]
as $t \to \infty$ within $S$, where $a$ denotes the real part of the root of $f_0(z) = 0$ having maximal real part, and $M$ denotes its multiplicity. In particular, \eqref{Borellimit} follows.
\end{proof}

\begin{proof}[Proof of Proposition \ref{prop2}]
The convergence of the integral \eqref{Yint} for $u \in \mathscr{S}(\eta)$ follows directly from \eqref{Borellimit}. 

We assert that $Y_j(u, \eta)$ is independent of $\eta$ as long as $\eta$ varies within the interval $\left(\eta_-, \eta_+\right)$. To demonstrate this, let us take two values $\eta, \widetilde{\eta} \in \left(\eta_-, \eta_+\right)$ and a reference point $t_0$ (with respect to both $\eta$ and $\widetilde{\eta}$) lying on $\gamma_j(\eta)$. Now, if we rotate the contour $\gamma_j(\eta)$ so that it aligns with the direction $\widetilde{\eta}$---while still passing through $t_0$---it is easily seen that the value of the integral does not change (for $u \in \mathscr{S}(\eta) \cap \mathscr{S}(\widetilde{\eta})$), provided we define the integrand by fixing its value at $t = t_0$ and extending it analytically along the new path of integration. In doing so, we observe that the new integrand becomes $\e^{ut}y_j(t, \widetilde{\eta})$, and the rotated integration path can be taken as $\gamma_j(\widetilde{\eta})$. Hence,
\[
Y_j(u, \eta) = Y_j(u, \widetilde{\eta}) \; \text{ for } \; u \in \mathscr{S}(\eta) \cap \mathscr{S}(\widetilde{\eta}) \; \text{ whenever } \; \eta, \widetilde{\eta} \in \left(\eta_-, \eta_+\right).
\]

To establish the asymptotic expansion \eqref{Ylarge}, we proceed as follows. A simple change of variables allows us to rewrite \eqref{Yint} in the form
\[
Y_j(u, \eta) = \e^{\lambda_j u} \frac{\e^{\im( \eta - \pi)}}{2\pi \im} 
\int_{-\infty}^{(0+)} 
\e^{u \e^{\im( \eta - \pi)} t} 
y_j\big(t\e^{\im( \eta - \pi)} + \lambda_j, \eta\big) \id t,
\]
where the contour of integration encircles the negative real axis in the counter-clockwise direction. Along the lower side of the branch cut, we have $\arg t = -\pi$, while along the upper side $\arg t = \pi$. Applying Watson’s lemma for loop integrals \cite[p. 120]{Olver1997} then gives the asymptotic expansion \eqref{Ylarge}.

To analyse the behaviour of $Y_j(u, \eta)$ as $|u| \to 0$, we derive a linear differential equation satisfied by it. Integrating by parts $m$ times in \eqref{Yint} gives
\[
(-1)^m u^m Y_j(u, \eta) = \frac{1}{2\pi \im} \int_{\gamma_j(\eta)} \e^{ut} \frac{\d^m y_j(t, \eta)}{\d t^m} \id t.
\]
Differentiating $k$ times under the integral sign yields
\[
(-1)^m \sum_{r=0}^k \binom{k}{r} \frac{m!}{(m - k + r)!} u^{m - k + r} \frac{\d^r Y_j(u, \eta)}{\d u^r}
= \frac{1}{2\pi \im} \int_{\gamma_j(\eta)} \e^{ut} t^k \frac{\d^m y_j(t, \eta)}{\d t^m} \id t.
\]
It then follows directly that if $y_j(t, \eta)$ satisfies the linear differential equation \eqref{ydiffeq}, then $Y_j(u, \eta)$ satisfies
\[
\sum_{m=0}^n \sum_{k=0}^m (-1)^m f_{n-m,\,m-k} \sum_{r=0}^k \binom{k}{r} \frac{m!}{(m - k + r)!} u^{m - k + r} \frac{\d^r Y_j(u, \eta)}{\d u^r} = 0,
\]
that is, the linear differential equation
\[
\sum_{m=0}^n u^m \left( \sum_{k=0}^{n-m} \frac{u^k}{(m+k)!} \sum_{r=m+k}^n (-1)^r \binom{r-k}{m} r! f_{n-r, k} \right) \frac{\d^m Y_j(u, \eta)}{\d u^m} = 0.
\]
The origin is a regular singular point of this equation, and the corresponding indicial equation takes the form 
\[
\sum_{m=0}^n \left( \frac{1}{m!} \sum_{k=m}^n (-1)^k \binom{k}{m} k! f_{n-k,0} \right) \frac{\Gamma(z+1)}{\Gamma(z - m + 1)} = 0.
\]
The left-hand side of this equation can be simplified by applying the Chu--Vandermonde identity \cite[Eq.~\href{http://dlmf.nist.gov/15.4.E24}{(15.4.24)}]{DLMF} for the Gauss hypergeometric function ${}_2 F_1$:  
\begin{align*}
&\sum_{m=0}^n \left( \frac{1}{m!} \sum_{k=m}^n (-1)^k \binom{k}{m} k! f_{n-k,0} \right) \frac{\Gamma(z+1)}{\Gamma(z - m + 1)}\\ &\qquad= \sum_{k=0}^n (-1)^k k!   f_{n - k, 0} \sum_{m=0}^k \binom{k}{m} \frac{1}{m!}  \frac{\Gamma(z+1)}{\Gamma(z - m + 1)} =\sum_{k=0}^n (-1)^k k!   f_{n - k, 0} \times {}_2 F_1\!\left( \mytop{- k, - z}{1};1\right)
\\ &\qquad= \sum_{k=0}^n (-1)^k f_{n - k, 0}   \frac{\Gamma(z + k + 1)}{\Gamma(z + 1)}
= \sum_{k=0}^n f_{n - k, 0}  \frac{\Gamma(-z)}{\Gamma(-z - k)}
= f_0 (-z - 1).
\end{align*}
Therefore,
\[
Y_j (u,\eta) = \mathcal{O}\big( u^{-a-1} \log^{M-1} u \big),
\]
as $|u| \to 0$ within $\mathscr{S}(\eta)$, where $a$ denotes the real part of the root of $f_0(z) = 0$ having maximal real part, and $M$ denotes its multiplicity. In particular, \eqref{Ysmall} follows.
\end{proof}

\begin{proof}[Proof of Proposition \ref{prop3}] By Proposition \ref{prop1}, the integral \eqref{wdef} converges absolutely and uniformly for $\Re(z) > a$. The integrand is analytic in $\mathcal{P}_\eta$, except for a branch cut extending from $0$ to infinity in the direction $\arg\left(-\lambda_j\right)$. Reversing the steps that led to the differential equation \eqref{ydiffeq} for $y_j(t,\eta)$ in the proof of Proposition \ref{prop1}, it follows that $w_j(z,\eta)$ satisfies the difference equation \eqref{diffeq}.

An argument similar to that in the proof of Proposition \ref{prop2} shows that $w_j(z,\eta) = w_j(z,\widetilde{\eta})$ for all $\widetilde{\eta} \in \mathcal{I}_\eta$. Note that $\left| \eta - \arg \lambda_j \right| < \pi$ and $- \frac{\pi}{2} + \eta^- < \arg \lambda _j  < \frac{\pi}{2} + \eta^+$ imply $\left| \widetilde{\eta} - \arg \lambda_j \right| < \pi$ for all $\widetilde{\eta} \in \mathcal{I}_\eta$.

To prove the representation \eqref{wint}, we first observe that, by \cite[Eq.~\href{http://dlmf.nist.gov/5.9.E1}{(5.9.1)}]{DLMF},
\[
 \Gamma(z+1) t^{-z-1}= \int_0^{[-\omega]} \e^{-tv} v^z \id v =-\e^{-\pi \im z}\int_0^{[\pi-\omega]} \e^{tu} u^z \id u ,
\]
valid when $\left| \arg t - \omega \right| < \frac{\pi}{2}$ and $\Re(z)>-1$. Consequently, we have
\begin{align*}
w_j(z, \eta) 
& = -\frac{\Gamma(z+1)}{2\pi \im} \int_{\gamma_j(\eta)} t^{-z-1} y_j(t, \eta) \id t 
= -\frac{1}{2\pi \im} \int_{\gamma_j(\eta)} y_j(t, \eta)  \Gamma(z+1)  t^{-z-1} \id t 
\\ & = \e^{-\pi \im z} \frac{1}{2\pi \im} \int_{\gamma_j(\eta)} y_j(t, \eta) \left( \int_0^{[\pi - \omega]} \e^{t u} u^z\id u \right) \id t,
\end{align*}
under the conditions $\left| \eta - \omega \right| < \frac{\pi}{2}$ and $\Re(z) > \max(-1,a)$. Assuming also that $\left| \arg \lambda_j - \omega \right| < \frac{\pi}{2}$, and using the asymptotic properties of $Y_j(u, \eta)$, we may interchange the order of integration to obtain
\[
w_j(z, \eta) = \e^{-\pi \im z} \int_0^{[\pi - \omega]} u^z \left( \frac{1}{2 \pi \im} \int_{\gamma_j(\eta)} \e^{t u} y_j(t, \eta) \id t \right) \id u 
= \e^{-\pi \im z} \int_0^{[\pi - \omega]} u^z Y_j(u, \eta) \id u.
\]
Finally, the condition $\Re(z) > -1$ (when applicable) can be lifted by analytic continuation.

To derive the inverse factorial expansion \eqref{winvfact}, note that, since $\left| \eta - \arg \lambda_j \right| < \pi$ and $- \frac{\pi}{2} + \eta^- < \arg \lambda _j  < \frac{\pi}{2} + \eta^+$ there exists an angle $\widetilde{\eta} \in \mathcal{I}_\eta$ such that $\left| \widetilde{\eta} - \arg \lambda_j \right| < \frac{\pi}{2}$. Let $\widetilde{\eta}$ be such an angle. Then, by applying \eqref{Ylarge} and \eqref{Ysmall}, it follows that if $N > \max (0,\Re(\mu_j) + a )$, we have
\[
Y_j(u, \widetilde{\eta}) = \e^{\lambda_j u} u^{-1}  \sum_{s=0}^{N-1} a_{s,j} \left( \lambda_j u \e^{-\pi \im} \right)^{\mu_j - s} + \mathsf{R}_j (u, \widetilde{\eta};N) ,
\]
where the remainder term satisfies
\[
\left|\mathsf{R}_j (u, \widetilde{\eta};N) \right| \le A_{N,j} \left| \e^{\lambda_j u}u^{-1}\right|\left| \lambda_j u \right|^{\Re(\mu_j) - N},
\]
for all $u \in \mathscr{S}(\widetilde{\eta})$, with a suitable positive constant $A_{N,j}$. Next, let $\widetilde{\omega}$ be any angle satisfying $\left| \eta - \widetilde{\omega} \right| < \frac{\pi}{2}$ and $\left| \arg \lambda_j - \widetilde{\omega} \right| < \frac{\pi}{2}$. Then, we obtain
\begin{align*}
& \left| w_j(z, \eta) - \lambda_j^{-z} \sum_{s=0}^{N-1} a_{s,j} \Gamma\left( z + \mu_j - s \right) \right|=\left| w_j(z, \widetilde{\eta}) - \lambda_j^{-z} \sum_{s=0}^{N-1} a_{s,j} \Gamma\left( z + \mu_j - s \right) \right| \\
&\qquad= \left| \e^{-\pi \im z} \int_0^{[\pi - \widetilde{\omega}]} u^z \left( Y_j(u, \widetilde{\eta}) - \e^{\lambda_j u} u^{-1} \sum_{s=0}^{N-1} a_{s,j} \left( \lambda_j u \e^{-\pi \im} \right)^{\mu_j - s} \right) \id u \right| \\
&\qquad= \left| \e^{-\pi \im z} \int_0^{[\pi - \widetilde{\omega}]} u^z \mathsf{R}_j(u,\widetilde{\eta};N) \id u \right|  
= \left| \e^{-\pi \im z} \int_0^{[\pi - \arg \lambda_j]} u^z \mathsf{R}_j(u,\widetilde{\eta};N)  \id u \right| \\
&\qquad= \left| \lambda_j^{-z-1} \int_0^{+\infty} t^z \mathsf{R}_j\big( \lambda_j^{-1} \e^{\pi \im} t, \widetilde{\eta};N \big) \id t \right|  
\le \left| \lambda _j^{ - z}  \right|A_{N,j} \int_0^{ + \infty } t^{\Re(z + \mu _j ) - N - 1} \e^{ - t} \id t 
\\ & \qquad= \left| \lambda _j^{ - z} \right|A_{N,j}  \Gamma \left( \Re\left(z + \mu _j \right) - N\right),
\end{align*}
provided $\Re(z+\mu _j ) > N > \max (0,\Re(\mu_j) + a )$. In the fourth step, we rotated the path of integration from the direction $\pi - \widetilde{\omega}$ to $\pi - \arg \lambda_j$. This deformation is justified by Cauchy’s theorem, together with the fact that the integrand vanishes along the arc at infinity. Finally, if $\Im(z)=\mathcal{O}\big(\sqrt{\Re(z)}\big)$, then by the uniform asymptotics for the ratio of gamma functions \cite{Fields1972} we have
\[
 \Gamma\left( \Re(z + \mu_j) - N \right) = \mathcal{O}(1)  \Gamma\left( z + \mu_j - N \right),
\]
as $\Re(z)\to+\infty$.
\end{proof}

\section{Hyperasymptotic expansions}\label{sec3}

In this section, we develop hyperasymptotic refinements of the inverse factorial expansions \eqref{winvfact} by employing the Borel transforms and their associated functions studied in the previous section. To proceed, we first establish the necessary notation and definitions. Set
\[
\left.
\begin{aligned}
\lambda_{j,\ell} &= \lambda_j - \lambda_\ell, \\
\mu_{j,\ell} &= \mu_j - \mu_\ell,
\end{aligned}
\quad \right\}
\quad \ell \ne j \quad \text{and} \quad \widetilde{\mu} = \max\left\{\Re(\mu_1), \ldots, \Re( \mu_n)\right\}.
\]
Furthermore, for each non-negative integer 
$m$, define
\begin{gather}\label{alpha}
\begin{split}
\alpha_j^{(m)} & = \min\left\{|\lambda_j - \lambda_{j_1}| + |\lambda_{j_1} - \lambda_{j_2}| + \ldots + |\lambda_{j_m} - \lambda_{j_{m+1}}| :\right. \\ & \hspace{10em}\left.
j_1 \neq j,\, K_{j_1,j} \neq 0,\, j_\ell \neq j_{\ell-1},\, K_{j_\ell,j_{\ell-1}} \neq 0\right\},
\end{split}
\end{gather}
with $j_0=j$ in the case $m=0$. If we consider the directed graph $G=(V,E)$ with vertices $V = \left\{\lambda_1, \ldots, \lambda_n\right\}$ and edges
\[
E = \left\{(\lambda_p, \lambda_q)\mid 1 \le p, q \le n,\, p \neq q,\, K_{q,p} \neq 0\right\},
\]
then $\alpha_j^{(m)}$ represents the length of the shortest directed path consisting of $m+1$ edges starting at $\lambda_j$. We leave it to the reader to verify that the sequence $\alpha_j^{(m)}$ is strictly increasing and unbounded.

The hyperasymptotic expansions will be expressed in terms of hyperterminant functions, which are closely related to the hyperterminants commonly encountered in the study of integrals and differential equations. We begin by defining these latter functions. For a non-negative integer $\ell$, we set
\begin{equation}\label{hyperF}
\left.
\begin{aligned}
& F^{(1)} \!\left( z;\mytop{M_0}{\sigma_0} \right) = \int_0^{[\pi-\arg \sigma_0]} \frac{\e^{\sigma_0 t_0} t_0^{M_0-1}}{z - t_0}\id t_0,\\
&F^{(\ell + 1)}\!\left(z;\mytop{M_0,}{\sigma_0,}
\mytop{\ldots,}{\ldots,}
\mytop{M_\ell}{\sigma_\ell}\right) = \int_0^{[\pi - \arg \sigma _0]} \frac{\e^{\sigma _0 t_0 } t_0^{M_0  - 1} }{z - t_0 } \prod_{r=1}^\ell \int_0^{ [\pi-\arg \sigma _r]} \frac{\e^{\sigma _r t_r } t_r^{M_r  - 1} }{t_{r - 1}  - t_r }\id t_r \id t_0,
\end{aligned}
\; \right\}
\end{equation}
where $M_0,\ldots,M_\ell$ are arbitrary complex numbers with $\Re(M_r) > 1$ for $r = 0,\ldots,\ell$, and $\sigma_0,\ldots,\sigma_\ell$ are elements of the Riemann surface associated with the logarithm such that $\arg \sigma_r \neq \arg \sigma_{r+1} \bmod{2\pi}$ for $r = 0,\ldots,\ell-1$. These multiple integrals converge provided that $|\arg\left(\sigma_0 z\right)| < \pi$, and they can be analytically continued to other values of $\arg z$. The function $F^{(\ell+1)}$ is referred to as the $(\ell+1)^{\text{th}}$ hyperterminant. We note in particular that
\[
F^{(1)} \!\left( z;\mytop{M_0}{\sigma_0} \right) =\e^{\pi \im M_0} \e^{\sigma_0 z} z^{M_0 - 1} \Gamma \left(M_0\right)\Gamma \left(1 - M_0,\sigma_0 z\right),
\]
where $\Gamma(M,z)$ denotes the incomplete gamma function. For further properties of these hyperterminants, see \cite{OldeDaalhuis1998a}.

If for some index $r$ we have $\arg \sigma_r = \arg \sigma_{r+1} \bmod{2\pi}$, it becomes necessary to specify whether the $t_r$-contour passes to the left or right of the pole of $\left(t_{r-1} - t_r\right)^{-1}$. To resolve this ambiguity, we define
\[
F^{(\ell + 1)}\!\left(z;\mytop{M_0,}{\sigma_0,}
\mytop{\ldots,}{\ldots,}
\mytop{M_\ell}{\sigma_\ell}\right)  = \lim_{\varepsilon \to 0^+}
F^{(\ell + 1)}\!\left(z;\mytop{M_0,}{\sigma_0\e^{-\ell \varepsilon \im},}\mytop{M_1,}{\sigma_1\e^{-(\ell-1) \varepsilon \im},}
\mytop{\ldots,}{\ldots,}\mytop{M_{\ell-1},}{\sigma_{\ell-1}\e^{- \varepsilon \im},}
\mytop{M_\ell}{\sigma_\ell}\right).
\]
Thus, we require that the pole of $(t_{r-1}-t_r)^{-1}$ is on the left-hand side of the $t_r$-contour.

We now define the new hyperterminant functions. For any positive integer $\ell$, set
\begin{gather}\label{Hdef}
\begin{split}
H^{(\ell + 1)}\!\left(z;\mytop{M_0,}{\sigma_0,}
\mytop{\ldots,}{\ldots,}
\mytop{M_\ell}{\sigma_\ell}\right)
& =\frac{- \left(\e^{-\pi \im} \sigma_0\right)^z}{\Gamma\left(z + M_0 + 1\right)}\left(\e^{-\pi \im} \sum_{r = 0}^\ell \sigma_r \right)^{1-\ell+\sum_{r = 0}^\ell M_r} \\ &\quad \times
F^{(\ell + 1)}\!\left(0;\mytop{z + M_0 + 2,}{\sigma_0,}
\mytop{M_1,}{\sigma_1,}
\mytop{\ldots,}{\ldots,}
\mytop{M_\ell}{\sigma_\ell}\right),
\end{split}
\end{gather}
where $z$ and $M_0, \ldots, M_\ell$ are complex numbers satisfying $\Re(z + M_0 + 1) > 0$ and $\Re(M_r) > 1$ for $r = 1, \ldots, \ell$, and where $\sigma_0, \ldots, \sigma_\ell$ again lie on the Riemann surface of the logarithm. The new hyperterminant $H^{(\ell+1)}$ is essentially obtained by taking the Mellin transform of the standard hyperterminant $F^{(\ell)}$; this Mellin transform, in turn, corresponds to the standard hyperterminant of one level higher, $F^{(\ell+1)}$, evaluated at the special value $0$ of its argument. The normalisation is chosen so that the resulting hyperasymptotic expansions take a form as close as possible to those arising in the context of integrals and differential equations. For example, from \cite[Eq.~(3.2)]{OldeDaalhuis2009}, we have the identity
\[
H^{(2)}\!\left(z; \mytop{M_0,}{\sigma_0,}
\mytop{M_1}{\sigma_1} \right)
=
\frac{\left(\sigma_0 + \sigma_1\right)^{M_0 + M_1}}{\sigma_0^{M_0} \sigma_1^{M_1}}
\frac{
\Gamma\left(M_1\right)}{z + M_0 + M_1}
{}_2 F_1\!\left(
\mytop{1, M_1}{z + M_0 + M_1 + 1}
; 1 + \frac{\sigma_0}{\sigma_1}
\right),
\]
where ${}_2 F_1$ denotes the Gauss hypergeometric function. Appendix \ref{Appendix} presents an efficient method for computing these new hyperterminant functions.

As in the proof of Proposition~\ref{prop3}, we choose an angle $\widetilde{\eta} \in \mathcal{I}_\eta$ such that $\left| \widetilde{\eta} - \arg \lambda_j \right| < \frac{\pi}{2}$. Then,
\begin{equation}\label{wintegral}
w_j(z, \eta) = w_j(z, \widetilde{\eta}) 
= \e^{-\pi \im z} \int_0^{[\pi - \widetilde{\omega}]} u^z Y_j(u, \widetilde{\eta})\id u = \e^{-\pi \im z} \int_0^{[\pi - \arg \lambda_j]} u^z Y_j(u, \widetilde{\eta})\id u,
\end{equation}
provided that $\Re(z) > -a$. For a non-negative integer $N_j^{(0)}$, define the remainder $\mathsf{R}_j^{(0)}\big(u, \widetilde\eta; N_j^{(0)}\big)$ by
\begin{equation}\label{Yremainder}
Y_j(u, \widetilde\eta) = \e^{\lambda_j u} u^{-1} 
\sum_{s = 0}^{N_j^{(0)} - 1} a_{s,j} 
\left( \lambda_j u \e^{-\pi \im} \right)^{\mu_j - s} 
+ \mathsf{R}_j^{(0)}\big(u, \widetilde\eta;N_j^{(0)}\big) 
\end{equation}
(cf.~\eqref{Ylarge}). Substituting this into \eqref{wintegral} gives
\begin{equation}\label{wremainder}
w_j (z,\eta ) = \lambda _j^{ - z} \sum_{s = 0}^{N_j^{(0)}  - 1} a_{s,j} \Gamma \left( z + \mu _j  - s \right)  + R_j^{(0)} \big(z,\eta ;N_j^{(0)} \big),
\end{equation}
where
\begin{equation}\label{remainderformula}
R_j^{(0)} \big(z,\eta ;N_j^{(0)} \big) = \e^{ - \pi \im z} \int_0^{\left[ \pi  - \arg \lambda _j  \right]} u^z \mathsf{R}_j^{(0)} \big(u,\widetilde{\eta};N_j^{(0)} \big)\id u,
 \end{equation}
provided that $\Re (z) > \max (N_j^{(0)}  - \Re(\mu _j)  - 1,a)$.

The key idea in deriving the hyperasymptotic expansions is to apply the well-established theory of hyperasymptotic expansions for Laplace-type integrals---such as the one defining $Y_j(u, \widetilde\eta)$---as developed in~\cite{OldeDaalhuis1998b}. In particular, hyperasymptotic re-expansions of the remainder $\mathsf{R}_j^{(0)}$ will, via \eqref{remainderformula}, directly yield hyperasymptotic re-expansions of the remainder $R_j^{(0)}$ in the inverse factorial expansion of $w_j(z, \eta)$. To ensure the applicability of this procedure, we adopt a stronger assumption on $\eta$, namely that $\eta^- \le \arg \lambda_j \le \eta^+$, in place of the original condition $- \frac{\pi}{2} + \eta^- < \arg \lambda_j < \frac{\pi}{2} + \eta^+$. This guarantees that the required re-expansions of $\mathsf{R}_j^{(0)}$ are valid.

We follow the standard terminology for the expansions discussed in the subsections below. The original inverse factorial expansions are referred to as level-zero---hence the notation $(0)$ in $N_j^{(0)}$. Truncating the level-zero expansion at or near its numerically least term yields a superasymptotic approximation. A level-one expansion is constructed by truncating the level-zero expansion at or beyond its numerically least term and then re-expanding the resulting remainder using $H^{(2)}$ functions (Gauss hypergeometric functions). These level-one expansions are called exponentially improved expansions. Subsequent re-expansions of the remainders correspond to higher levels in the hyperasymptotic hierarchy.

\subsection{Superasymptotics} In this subsection, we examine the behaviour of the remainder $R_j^{(0)}$ in \eqref{wremainder} in the case where both $\Re(z)$ and $N_j^{(0)}$ are large and of comparable magnitude.

We begin by presenting an explicit expression for the remainder $\mathsf{R}_j^{(0)}$ in \eqref{Yremainder}, derived by following the procedure outlined in \cite[\S3]{OldeDaalhuis1998b}. The key difference in our setting is that the Borel transform grows at most polynomially, rather than exponentially, at infinity. As a result, the large circular contours used in \cite{OldeDaalhuis1998b} can be extended to infinity. We obtain
\begin{equation}\label{R0integral}
\mathsf{R}_j^{(0)}\big(u, \widetilde\eta;N_j^{(0)}\big)
= \sum_{j_1 \ne j} \frac{K_{j_1, j}}{2\pi \im} 
\int_{\lambda_j}^{[\widetilde\eta]} 
\int_{\gamma_{j_1}(\theta_{j,j_1})} 
\frac{\e^{u t_0}}{t_1 - t_0} 
\left( \frac{t_0 - \lambda_j}{t_1 - \lambda_j} \right)^{N_j^{(0)} - \mu_j} 
y_{j_1}(t_1, \widetilde\eta)  \id t_1  \id t_0,
\end{equation}
provided that $\arg u = \pi - \arg \lambda_j$ and $N_j^{(0)}  > \max(0,\Re(\mu _j ) - 1,\Re(\mu _j ) + a)$. Here, $\gamma_{j_1}(\theta_{j,j_1})$ is a contour that begins at infinity along the left-hand side of the branch cut $\arg\left(t_1 - \lambda_{j_1}\right) = \theta_{j,j_1}$, encircles $\lambda_{j_1}$ in the positive (counter-clockwise) direction, and returns to infinity along the left-hand side of the same cut. Along the left-hand side of the branch cut, we have $\arg t = \theta_{j,j_1}-2\pi$, while along the right-hand side $\arg t = \theta_{j,j_1}$. The phase $\theta_{j,j_1}$ of $\lambda_{j_1} - \lambda_j$ is chosen to lie in the interval $(\widetilde\eta - 2\pi, \widetilde\eta)$. If $|\lambda_j-\lambda_{j_2}| > |\lambda_j-\lambda_{j_1}|$ and $\theta_{j,j_1} = \theta_{j,j_2}$, then the contour $\gamma_{j_1}(\theta_{j,j_1})$ passes to the right of the contour $\gamma_{j_2}(\theta_{j,j_2})$.

By applying the method leading to \cite[Eq.~(3.10)]{OldeDaalhuis1998b}, we obtain
\begin{align*}
\mathsf{R}_j^{(0)}\big(u, \widetilde\eta;N_j^{(0)}\big)
& =  \e^{\lambda_j u}|u|^{\Re(\mu_j) - N_j^{(0)}}\Gamma\big(N_j^{(0)} - \Re\left(\mu_j\right) + 1\big) \\ &\quad\times \sum_{j_1 \ne j} K_{j_1, j}  
\left| \lambda_{j_1, j} \right|^{-N_j^{(0)}} 
\big(N_j^{(0)}\big)^{\Re(\mu_{j_1}) - 1}  \mathcal{O}(1) \\ &\quad
+ \e^{\lambda_j u - \alpha_j^{(0)} |u|} \sum_{j_1 \ne j} K_{j_1, j} 
\left( \frac{\alpha_j^{(0)}}{|\lambda_{j_1, j}|} \right)^{N_j^{(0)}} 
\big(N_j^{(0)}\big)^{\Re(\mu_{j_1}) - 1} \mathcal{O}(1),
\end{align*}
valid for large $u$ and $N_j^{(0)}$, with $\arg u = \pi - \arg \lambda_j$. The quantity $\alpha_j^{(0)}$ is defined in \eqref{alpha}. From \eqref{Ysmall}, we may infer that this estimate remains valid for all $u$ satisfying $\arg u = \pi - \arg \lambda_j$, after adjusting the implied constants if necessary. Substituting this result into \eqref{remainderformula} yields
\begin{gather}\label{R0estimate}
\begin{split}
 R_j^{(0)}\big(z,\eta ;N_j^{(0)} \big) &  = \lambda_j^{-z} \Gamma\big(N_j^{(0)} - \Re\left(\mu_j\right) + 1\big) \Gamma\big( \Re\left(z + \mu_j\right) - N_j^{(0)} + 1 \big)  \\ & \quad\times\sum_{j_1 \ne j} K_{j_1, j} \left| \frac{\lambda_j}{\lambda_{j_1, j}} \right|^{N_j^{(0)}} \big(N_j^{(0)}\big)^{\Re(\mu_{j_1}) - 1} \mathcal{O}(1)  \\ & \quad+ \lambda_j^{-z} \left( \frac{|\lambda_j|}{|\lambda_j|+\alpha_j^{(0)} } \right)^{\Re(z)}\sum_{j_1 \ne j} K_{j_1, j} \left( \frac{\alpha_j^{(0)}}{|\lambda_{j_1, j}|} \right)^{N_j^{(0)}} \big(N_j^{(0)}\big)^{\Re(\mu_{j_1}) - 1}  \mathcal{O}(1),
\end{split}
\end{gather}
which is valid for large $\Re(z)$ and $N_j^{(0)}$, provided that both gamma function arguments are positive. We now assume
\[
N_j^{(0)}  = \beta _j^{(0)} \Re(z) + \gamma _j^{(0)} ,
\]
where $\beta_j^{(0)} \in (0,1)$ is a parameter at our disposal, and $\gamma_j^{(0)}$ is bounded. Substituting this into \eqref{R0estimate} and applying Stirling's formula \cite[Eq.~\href{http://dlmf.nist.gov/5.11.E7}{(5.11.7)}]{DLMF} yields
\begin{align*}
R_j^{(0)}\big(z,\eta ;N_j^{(0)} \big) &=\lambda_j^{-z} \sum_{\substack{j_1 \ne j}} 
K_{j_1,j}  \Gamma\left( \Re\left(z + \mu_{j_1}\right) + \tfrac{1}{2} \right)
\big( 1 - \beta_j^{(0)} \big)^{(1 - \beta_j^{(0)}) \Re(z)}\\&\quad \times
\left( \frac{|\lambda_j|}{|\lambda_{j_1,j}|} \beta_j^{(0)} \right)^{\beta_j^{(0)}\Re(z)} \mathcal{O}(1) \\
&\quad + \lambda_j^{-z} 
\left( \frac{|\lambda_j|}{|\lambda_j| + \alpha_j^{(0)}} \right)^{\Re(z)} 
\sum_{\substack{j_1 \ne j}} K_{j_1,j}
\left( \frac{\alpha_j^{(0)}}{|\lambda_{j_1,j}|} \right)^{\beta_j^{(0)} \Re(z)} 
\left( \Re(z) \right)^{\Re(\mu_{j_1}) - 1} \mathcal{O}(1) \\
&= \lambda_j^{-z} \sum_{\substack{j_1 \ne j}} 
K_{j_1,j}  \Gamma\left( \Re\left(z + \mu_{j_1}\right) + \tfrac{1}{2} \right)
\big( 1 - \beta_j^{(0)} \big)^{(1 - \beta_j^{(0)}) \Re(z)}\\&\quad \times
\left( \frac{|\lambda_j|}{|\lambda_{j_1,j}|} \beta_j^{(0)} \right)^{\beta_j^{(0)} \Re(z)} \mathcal{O}(1),
\end{align*}
as $\Re(z)\to+\infty$. Each term in the final sum is minimised when
\[
1 - \beta_j^{(0)} = \frac{|\lambda_j|}{|\lambda_{j_1,j}|} \beta_j^{(0)} 
\; \Longleftrightarrow \; 
\beta_j^{(0)} = \frac{|\lambda_{j_1,j}|}{|\lambda_j| + |\lambda_{j_1,j}|}
\]
(see, for example, \cite[Theorem 2.7.1]{Cover1991}). This yields the optimal choice
\[
\beta _j^{(0)}  = \frac{\alpha _j^{(0)} }{\left| \lambda _j  \right| + \alpha _j^{(0)} }.
\]
With this choice, we obtain the main result of this subsection: if $\eta$ is such that $\eta^- \le \arg \lambda_j \le \eta^+$ and $N_j^{(0)}=\frac{\alpha _j^{(0)} }{\left| \lambda _j  \right| + \alpha _j^{(0)} } \Re(z)+\mathcal{O}(1)$, then the remainder term in \eqref{wremainder} satisfies
\[
R_j^{(0)}\big(z,\eta ;N_j^{(0)} \big)  = \lambda _j^{ - z} \Gamma \big( \Re (z) + \widetilde{\mu}  + \tfrac{1}{2} \big)\left( \frac{\left| \lambda _j \right|}{\left| \lambda _j  \right| + \alpha _j^{(0)} } \right)^{\Re (z)} \mathcal{O}(1),
\]
as $\Re(z)\to+\infty$.

\subsection{Level-one} In this section, we present a re-expansion of the remainder term $R_j^{(0)}$ in \eqref{wremainder} in terms of the hyperterminant $H^{(2)}$. This yields the level-one hyperasymptotic expansion, also called an exponentially improved asymptotic expansion.

First, we state a re-expansion of the remainder $\mathsf{R}_j^{(0)}$ in \eqref{Yremainder} in terms of the hyperterminant $F^{(1)}$, which follows from the procedure described in \cite[\S3]{OldeDaalhuis1998b}. Due to the reasons discussed in the previous subsection, the resulting expression is simpler than the corresponding one in \cite{OldeDaalhuis1998b}. We obtain
\begin{gather}\label{Ylevel1}
\begin{split}
\mathsf{R}_j^{(0)}\big(u, \widetilde\eta ; N_j^{(0)}\big) 
= \e^{\lambda_j u} u^{\mu_j - N_j^{(0)}} 
\sum_{j_1 \ne j} K_{j_1, j} &\sum_{s = 0}^{N_{j_1}^{(1)} - 1} 
a_{s, j_1} \left( \lambda_{j_1} \e^{- \pi \im} \right)^{\mu_{j_1} - s}\\ & \times
F^{(1)}\!\left(u; 
\mytop{N_j^{(0)} - s + \mu_{j_1, j}}{\lambda_{j_1, j}}
\right) 
+ \mathsf{R}_j^{(1)}(u, \widetilde\eta),
\end{split}
\end{gather}
with
\begin{align*}
\mathsf{R}_j^{(1)}(u, \widetilde{\eta}) & = 
\sum_{\substack{j_1 \ne j}} 
\sum_{\substack{j_2 \ne j_1}} 
\frac{K_{j_1,j} K_{j_2,j_1}}{2\pi \im} 
\int_{\lambda_j}^{[\widetilde{\eta}]} 
\int_{\lambda_{j_1}}^{[\theta_{j,j_1}]} 
\int_{\gamma_{j_2}(\theta_{j_1,j_2})} \frac{\e^{u t_0}}{\left(t_1 - t_0\right)\left(t_2 - t_1\right)}\\  & \quad \times
\left( \frac{t_0 - \lambda_j}{t_1 - \lambda_j} \right)^{N_j^{(0)} - \mu_j} 
\left( \frac{t_1 - \lambda_{j_1}}{t_2 - \lambda_{j_1}} \right)^{N_{j_1}^{(1)} - \mu_{j_1}} 
y_{j_2}(t_2, \widetilde\eta) 
\id t_2 \id t_1 \id t_0,
\end{align*}
valid under the conditions that $\arg u = \pi - \arg \lambda_j$, $N_j^{(0)}  > \max (0,\Re(\mu _j ) - 1,N_{j_1 }^{(1)}  - \Re(\mu _{j_1 ,j} ) )$, and $N_{j_1 }^{(1)}  > \max (0,\Re (\mu _{j_1 } ) - 1,\Re(\mu _{j_1 } ) + a)$, for all indices $j_1 \ne j$ such that $K_{j_1,j} \ne 0$. The contours $\gamma_{j_2}(\theta_{j_1,j_2})$ are defined analogously to those in \eqref{R0integral}. To keep the notation concise, the dependence of $\mathsf{R}_j^{(1)}$ on $N_j^{(0)}$ and the values of $N_{j_1}^{(1)}$ is suppressed.

Substituting this result into \eqref{remainderformula} yields the re-expansion
\begin{gather}\label{level1}
\begin{split}
R_j^{(0)} \big( z,\eta ;N_j^{(0)}  \big) = \lambda _j^{ - z} &\Gamma\big(z+\mu_j-N_j^{(0)}+1\big)\sum_{j_1 \ne j} K_{j_1 ,j} \sum_{s = 0}^{N_{j_1}^{(1)}  - 1} a_{s,j_1 } \\ & \times H^{(2)} \!\left( z;\mytop{\mu _j  - N_j^{(0)} ,}{\lambda _j,}
\mytop{N_j^{(0)}  - s + \mu _{j_1 ,j} }{\lambda _{j_1 ,j}} \right)  + R_j^{(1)} (z,\eta ),
\end{split}
\end{gather}
where the remainder term $R_j^{(1)}$ is given by
\begin{equation}\label{remainder1formula}
R_j^{(1)} (z,\eta ) = \e^{ - \pi \im z} \int_0^{\left[ \pi  - \arg \lambda _j  \right]} u^z \mathsf{R}_j^{(1)} (u,\widetilde \eta )\id u ,
\end{equation}
valid under the condition $\Re (z) > \max (N_j^{(0)}  - \Re(\mu _j)  - 1,a)$. As before, to keep the notation concise, the dependence of $R_j^{(1)}$ on $N_j^{(0)}$ and the values of $N_{j_1}^{(1)}$ is omitted.

By applying the method leading to the penultimate expression in \cite[Eq.~(4.13)]{OldeDaalhuis1998b}, we obtain the estimate
\begin{align*}
\mathsf{R}_j^{(1)}(u, \widetilde{\eta}) &= 
\e^{\lambda_j u} 
\left| u \right|^{\Re(\mu_j) - N_j^{(0)} + 1} \\ &\quad \times
\sum_{\substack{j_1 \ne j}} \sum_{\substack{j_2 \ne j_1}} K_{j_1,j}K_{j_2,j_1}
\Gamma\big(N_{j_1}^{(1)} - \Re\left(\mu_{j_1}\right) + 1\big)
\Gamma\big(N_j^{(0)} - N_{j_1}^{(1)} + \Re\left(\mu_{j_1,j}\right) - 1\big)
 \\ &\quad \times 
\left| \lambda_{j_1,j} \right|^{N_{j_1}^{(1)} - N_j^{(0)}}
\left| \lambda_{j_2,j_1} \right|^{-N_{j_1}^{(1)}}\left| u \right|^{\Re(\mu_{j_2})}
\mathcal{O}(1),
\end{align*}
which holds for large values of $u$, $N_j^{(0)}$, and the $N_{j_1}^{(1)}$, with $\arg u = \pi - \arg \lambda_j$, and assuming that the arguments of the gamma functions are positive. Moreover, using \eqref{Ysmall} and the boundedness of the hyperterminants $F^{(1)}$ near the origin, we conclude that this estimate remains valid for all $u$ satisfying $\arg u = \pi - \arg \lambda_j$, possibly after adjusting the implied constants. Substituting this result into \eqref{remainder1formula} yields
\begin{gather}\label{R1estimate}
\begin{split}
R_j^{(1)}(z, \eta) & = 
\lambda_j^{-z} 
\sum_{\substack{j_1 \ne j}}\sum_{\substack{j_2 \ne j_1}} K_{j_1,j} K_{j_2,j_1}  \Gamma\big( \Re\left( z + \mu_j + \mu_{j_2} \right) - N_j^{(0)} + 2 \big)
\Gamma\big( N_{j_1}^{(1)} - \Re\left( \mu_{j_1} \right) + 1 \big)
 \\ &\quad \times \Gamma\big( N_j^{(0)} - N_{j_1}^{(1)} + \Re\left( \mu_{j_1,j} \right) - 1 \big)\left| \lambda_j \right|^{N_j^{(0)}}
\left| \lambda_{j_1,j} \right|^{N_{j_1}^{(1)} - N_j^{(0)}}
\left| \lambda_{j_2,j_1} \right|^{-N_{j_1}^{(1)}} 
\mathcal{O}(1),
\end{split}
\end{gather}
which is valid for large $\Re(z)$, $N_j^{(0)}$, and all $N_{j_1}^{(1)}$, provided that all gamma function arguments are positive. We now aim to optimise the remainder $R_j^{(1)}$ by setting
\[
N_j^{(0)}  = \beta _j^{(0)} \Re(z) + \gamma _j^{(0)} ,\quad N_{j_1 }^{(1)}  = \beta _{j_1 }^{(1)} \Re(z) + \gamma _{j_1 }^{(1)} ,
\]
where the constants $\beta _j^{(0)}$ and $\beta _{j_1 }^{(1)}$ satisfy
$0<\beta _{j_1 }^{(1)}<\beta _j^{(0)}<1$, and $\gamma _j^{(0)}$ and $\gamma _{j_1 }^{(1)}$ remain bounded as $\Re(z)\to+\infty$. Substituting these expressions into \eqref{R1estimate} and applying Stirling's formula gives
\begin{align*}
R_j^{(1)}(z, \eta) & = 
\lambda_j^{-z} 
\sum_{\substack{j_1 \ne j}} 
\sum_{\substack{j_2 \ne j_1}} 
K_{j_1,j} K_{j_2,j_1} \Gamma\left( \Re\left( z + \mu_{j_2} \right) + 1 \right) \big(1 - \beta_j^{(0)}\big)^{(1 - \beta_j^{(0)}) \Re(z)} \\ & \quad \times 
\left( 
  \frac{|\lambda_j|}{|\lambda_{j_1,j}|} \big( \beta_j^{(0)} - \beta_{j_1}^{(1)} \big) 
\right)^{(\beta_j^{(0)} - \beta_{j_1}^{(1)}) \Re(z)} 
\left( 
  \frac{|\lambda_j|}{|\lambda_{j_2,j_1}|} \beta_{j_1}^{(1)} 
\right)^{\beta_{j_1}^{(1)} \Re(z)}  
\mathcal{O}(1),
\end{align*}
as $\Re(z)\to+\infty$. Each term on the right-hand side is minimised when
\begin{align*}
1 - \beta_j^{(0)} &= \frac{|\lambda_j|}{|\lambda_j| + |\lambda_{j_1,j}| + |\lambda_{j_2,j_1}|}, \quad
\beta_j^{(0)} - \beta_{j_1}^{(1)} = \frac{|\lambda_{j_1,j}|}{|\lambda_j| + |\lambda_{j_1,j}| + |\lambda_{j_2,j_1}|}, \\
\beta_{j_1}^{(1)} &= \frac{|\lambda_{j_2,j_1}|}{|\lambda_j| + |\lambda_{j_1,j}| + |\lambda_{j_2,j_1}|}
\end{align*}
(see again, for instance, \cite[Theorem 2.7.1]{Cover1991}). This leads to the optimal choice
\[
1-\beta _j^{(0)}  = \frac{\left| \lambda _j  \right|}{\left| \lambda _j  \right| + \alpha _j^{(1)} } \;\Longleftrightarrow\; \beta _j^{(0)}  = \frac{\alpha _j^{(1)} }{\left| \lambda _j  \right| + \alpha _j^{(1)} },
\]
where $\alpha_j^{(1)}$ is defined in \eqref{alpha}. For each $j_1$ such that $K_{j_1,j} \ne 0$, we may optimise the value of $\beta_{j_1}^{(1)}$ by selecting $j_2$ with $K_{j_2,j_1} \ne 0$ appropriately. Nevertheless, a reasonable and effective choice is
\[
\beta _{j_1 }^{(1)}  = \beta _j^{(0)}  - \frac{\alpha _j^{(0)} }{\left| \lambda _j  \right| + \alpha _j^{(1)} } =\frac{\alpha _j^{(1)}-\alpha _j^{(0)} }{\left| \lambda _j  \right| + \alpha _j^{(1)} }.
\]
With these choices, we arrive at the final estimate for the level-one hyperasymptotic remainder in \eqref{level1}:
\[
R_j^{(1)}(z, \eta) = 
\lambda_j^{-z} \Gamma\left( \Re(z) + \widetilde{\mu} + 1 \right) 
\left( \frac{ \left| \lambda_j \right| }{ \left| \lambda_j \right| + \alpha_j^{(1)} } \right)^{\Re(z)} 
\mathcal{O}(1),
\]
as $\Re(z)\to+\infty$, provided that $\eta$ satisfies $\eta^- \le \arg \lambda_j \le \eta^+$.

\subsection{Level-two} In this section, we re-expand the remainder term $R_j^{(1)}$ from \eqref{level1} using the hyperterminant $H^{(3)}$, thereby obtaining the level-two hyperasymptotic expansion.

Analogously to the level-one case, where we begin by re-expanding the remainder $\mathsf{R}_j^{(0)}$ in terms of the hyperterminant $F^{(1)}$, we now obtain a re-expansion of the remainder $\mathsf{R}_j^{(1)}$ in \eqref{Ylevel1} using the hyperterminant $F^{(2)}$, following the procedure described in \cite[\S3]{OldeDaalhuis1998b}. For the reasons discussed in the previous subsections, the resulting expression is simpler than the corresponding one in \cite{OldeDaalhuis1998b}. We obtain
\begin{align*}
\mathsf{R}_j^{(1)}(u, \widetilde\eta) &= \e^{\lambda_j u} u^{\mu_j - N_j^{(0)}} 
\sum_{j_1 \ne j} \sum_{j_2 \ne j_1} 
K_{j_1, j} K_{j_2, j_1} 
\sum_{s = 0}^{N_{j_2}^{(2)} - 1} 
a_{s, j_2} \left( \lambda_{j_2} \e^{-\pi \im} \right)^{\mu_{j_2} - s} \\
&\quad \times F^{(2)}\!\left(u; 
\mytop{N_j^{(0)} - N_{j_1}^{(1)} + \mu_{j_1, j} + 1,}{\lambda_{j_1, j},}
\mytop{N_{j_1}^{(1)} - s + \mu_{j_2, j_1}}{\lambda_{j_2, j_1}}
\right)
+ \mathsf{R}_j^{(2)}(u, \widetilde\eta),
\end{align*}
with
\begin{align*}
\mathsf{R}_j^{(2)}(u, \widetilde{\eta}) &= 
\sum_{j_1 \ne j} \sum_{j_2 \ne j_1} \sum_{j_3 \ne j_2}
\frac{K_{j_1, j} K_{j_2, j_1} K_{j_3, j_2}}{2\pi \im}
\int_{\lambda_j}^{[\widetilde{\eta}]}
\int_{\lambda_{j_1}}^{[\theta_{j, j_1}]}
\int_{\lambda_{j_2}}^{[\theta_{j_1, j_2}]}
\int_{\gamma_{j_2}(\theta_{j_2,j_3})} \\&\quad \times
\frac{\e^{u t_0}}{\left(t_1 - t_0\right)\left(t_2 - t_1\right)\left(t_3 - t_2\right)} 
\left( \frac{t_0 - \lambda_j}{t_1 - \lambda_j} \right)^{N_j^{(0)} - \mu_j}
\left( \frac{t_1 - \lambda_{j_1}}{t_2 - \lambda_{j_1}} \right)^{N_{j_1}^{(1)} - \mu_{j_1}} \\&\quad \times\left( \frac{t_2 - \lambda_{j_2}}{t_3 - \lambda_{j_2}} \right)^{N_{j_2}^{(2)} - \mu_{j_2}}
y_{j_3}(t_3, \widetilde{\eta}) \id t_3 \id t_2 \id t_1 \id t_0,
\end{align*}
valid under the conditions that $\arg u=\pi-\arg\lambda_j$, $N_j^{(0)}  > \max (0,\Re(\mu _j ) - 1,N_{j_1 }^{(1)}  - \Re (\mu _{j_1 ,j} ))$, $N_{j_1 }^{(1)}  > \max (0,\Re(\mu _{j_1 } ) - 1,N_{j_2 }^{(2)}  - \Re (\mu _{j_2 ,j_1 } ))$, $N_{j_2 }^{(2)}  > \max (0,\Re(\mu _{j_2 } ) - 1,\Re(\mu _{j_2 } ) + a)$, for all indices $j_1 \ne j$ and $j_2 \ne j_1$ such that $K_{j_1,j} \ne 0$ and $K_{j_2,j_1} \ne 0$. The contours $\gamma_{j_2}(\theta_{j_2,j_3})$ are defined analogously to those in \eqref{R0integral}. For brevity, the dependence of $\mathsf{R}_j^{(2)}$ on $N_j^{(0)}$ as well as on the values of $N_{j_1}^{(1)}$ and $N_{j_2}^{(2)}$ is suppressed.

Substituting this result into \eqref{remainder1formula}, we obtain the re-expansion
\begin{gather}\label{level2}
\begin{split}
R_j^{(1)}(z, \eta) 
&= \lambda_j^{-z} \Gamma\big(z + \mu_j - N_j^{(0)} + 1\big) 
\sum_{j_1 \ne j} \sum_{j_2 \ne j_1} 
K_{j_1, j}  K_{j_2, j_1} 
\sum_{s = 0}^{N_{j_2}^{(2)} - 1} 
a_{s, j_2} \\
&\quad \times H^{(3)}\!\left(z;
\mytop{\mu_j - N_j^{(0)},}{\lambda_j}
\mytop{N_j^{(0)} - N_{j_1}^{(1)} + \mu_{j_1, j} + 1,}{\lambda_{j_1, j}}
\mytop{N_{j_1}^{(1)} - s + \mu_{j_2, j_1}}{\lambda_{j_2, j_1}}
\right) 
+ R_j^{(2)}(z, \eta),
\end{split}
\end{gather}
where the remainder term $R_j^{(2)}(z, \eta)$ is given by
\begin{equation}\label{remainder2formula}
R_j^{(2)} (z,\eta ) = \e^{ - \pi \im z} \int_0^{\left[ \pi  - \arg \lambda _j \right]} u^z \mathsf{R}_j^{(2)} (u,\widetilde{\eta})\id u,
\end{equation}
and is valid under the condition $\Re (z) > \max (N_j^{(0)}  - \Re(\mu _j)  - 1,a)$. As before, to keep the notation concise, we omit the explicit dependence of $R_j^{(2)}$ on $N_j^{(0)}$ as well as on the values of $N_{j_1}^{(1)}$ and $N_{j_2}^{(2)}$.

By applying the argument that leads to the second-to-last expression in \cite[Eq.~(5.11)]{OldeDaalhuis1998b}, we obtain the following estimate:
\begin{align*}
\mathsf{R}_j^{(2)}(u, \widetilde{\eta}) & =\e^{\lambda_j u} \left| u \right|^{\Re(\mu_j) - N_j^{(0)} + 1} \\ &\quad\times \sum_{j_1 \ne j} \sum_{j_2 \ne j_1} \sum_{j_3 \ne j_2} 
K_{j_1, j} K_{j_2, j_1} K_{j_3, j_2} \big( N_j^{(0)} - \Re\left(\mu_j\right) \big)
\big( N_{j_1}^{(1)} - \Re\left(\mu_{j_1}\right) \big)  \\ &\quad \times \Gamma\big( N_j^{(0)} - N_{j_1}^{(1)} + \Re\left(\mu_{j_1, j}\right) - 1 \big)   \Gamma\big( N_{j_2}^{(2)} - \Re\left(\mu_{j_2}\right) + 1 \big)  \\ &\quad\times \Gamma\big( N_{j_1}^{(1)} - N_{j_2}^{(2)} + \Re\left(\mu_{j_2, j_1}\right) - 1 \big)   \left| \lambda_{j_1, j} \right|^{N_{j_1}^{(1)} - N_j^{(0)}} 
\left| \lambda_{j_2, j_1} \right|^{N_{j_2}^{(2)} - N_{j_1}^{(1)}} \\ &\quad\times
\left| \lambda_{j_3, j_2} \right|^{-N_{j_2}^{(2)}} 
\left| u \right|^{\Re(\mu_{j_3})} 
\mathcal{O}(1),
\end{align*}
which holds for large values of $u$, $N_j^{(0)}$, and all $N_{j_1}^{(1)}$ and $N_{j_2}^{(2)}$, under the condition $\arg u = \pi - \arg \lambda_j$, and provided that the arguments of the gamma functions are positive. Furthermore, by invoking \eqref{Ysmall} and the boundedness of the hyperterminants $F^{(2)}$ near the origin, we conclude that the estimate remains valid for all values of $u$ with $\arg u = \pi - \arg \lambda_j$, possibly after adjusting the implied constants. Substituting this result into \eqref{remainder2formula}, we obtain
\begin{gather}\label{R2estimate}
\begin{split}
R_j^{(2)}(z, \eta) 
&= \lambda_j^{-z} 
\sum_{j_1 \ne j} \sum_{j_2 \ne j_1} \sum_{j_3 \ne j_2}
K_{j_1, j} K_{j_2, j_1} K_{j_3, j_2} 
\big( N_j^{(0)} - \Re\left(\mu_j\right) \big)
\big( N_{j_1}^{(1)} - \Re\left(\mu_{j_1}\right) \big) \\
&\quad \times \Gamma\big(  \Re\left(z + \mu_j + \mu_{j_3}\right) - N_j^{(0)} + 2 \big)
\Gamma\big( N_j^{(0)} - N_{j_1}^{(1)} + \Re\left(\mu_{j_1, j}\right) - 1 \big) \\
&\quad \times \Gamma\big( N_{j_2}^{(2)} - \Re\left(\mu_{j_2}\right) + 1 \big)
\Gamma\big( N_{j_1}^{(1)} - N_{j_2}^{(2)} + \Re\left(\mu_{j_2, j_1}\right) - 1 \big) \\
&\quad \times \left| \lambda_j \right|^{N_j^{(0)}}
\left| \lambda_{j_1, j} \right|^{N_{j_1}^{(1)} - N_j^{(0)}}
\left| \lambda_{j_2, j_1} \right|^{N_{j_2}^{(2)} - N_{j_1}^{(1)}}
\left| \lambda_{j_3, j_2} \right|^{-N_{j_2}^{(2)}}
\mathcal{O}(1),
\end{split}
\end{gather}
which is valid for large $\Re(z)$, $N_j^{(0)}$, and all $N_{j_1}^{(1)}$ and $N_{j_2}^{(2)}$, provided that all gamma function arguments are positive. To optimise the remainder $R_j^{(2)}$, we now set
\[
N_j^{(0)} = \beta_j^{(0)} \Re(z) + \gamma_j^{(0)}, \quad
N_{j_1}^{(1)} = \beta_{j_1}^{(1)} \Re(z) + \gamma_{j_1}^{(1)}, \quad
N_{j_2}^{(2)} = \beta_{j_2}^{(2)} \Re(z) + \gamma_{j_2}^{(2)},
\]
where the constants $\beta _j^{(0)}$, $\beta _{j_1 }^{(1)}$, and $\beta _{j_2 }^{(2)}$ satisfy
$0<\beta _{j_2 }^{(2)}<\beta _{j_1 }^{(1)}<\beta _j^{(0)}<1$, and the constants $\gamma _j^{(0)}$, $\gamma _{j_1 }^{(1)}$, and $\gamma _{j_2 }^{(2)}$ remain bounded as $\Re(z)\to+\infty$. Substituting these expressions into \eqref{R2estimate} and using Stirling’s formula yields
\begin{align*}
R_j^{(2)}(z, \eta)
&= \lambda_j^{-z} 
\sum_{j_1 \ne j} \sum_{j_2 \ne j_1} \sum_{j_3 \ne j_2}
K_{j_1, j} K_{j_2, j_1} K_{j_3, j_2} 
\Gamma\left( \Re\left(z + \mu_{j_3}\right) + \tfrac{3}{2} \right) \\
&\quad \times \big( 1 - \beta_j^{(0)} \big)^{(1 - \beta_j^{(0)}) \Re(z)} \left( 
\frac{|\lambda_j|}{|\lambda_{j_1, j}|} 
\big( \beta_j^{(0)} - \beta_{j_1}^{(1)} \big)
\right)^{( \beta_j^{(0)} - \beta_{j_1}^{(1)} ) \Re(z)} \\
&\quad \times \left( 
\frac{|\lambda_j|}{|\lambda_{j_2, j_1}|} 
\big( \beta_{j_1}^{(1)} - \beta_{j_2}^{(2)} \big)
\right)^{( \beta_{j_1}^{(1)} - \beta_{j_2}^{(2)} ) \Re(z)} \left( 
\frac{|\lambda_j|}{|\lambda_{j_3, j_2}|} 
\beta_{j_2}^{(2)}
\right)^{\beta_{j_2}^{(2)} \Re(z)}
\mathcal{O}(1),
\end{align*}
as $\Re(z)\to+\infty$. Each term on the right-hand side is minimised when
\begin{align*}
1 - \beta_j^{(0)} &= \frac{|\lambda_j|}{|\lambda_j| + |\lambda_{j_1, j}| + |\lambda_{j_2, j_1}| + |\lambda_{j_3, j_2}|} ,\quad
\beta_j^{(0)} - \beta_{j_1}^{(1)} = \frac{|\lambda_{j_1, j}|}{|\lambda_j| + |\lambda_{j_1, j}| + |\lambda_{j_2, j_1}| + |\lambda_{j_3, j_2}|}, \\
\beta_{j_1}^{(1)} - \beta_{j_2}^{(2)} &= \frac{|\lambda_{j_2, j_1}|}{|\lambda_j| + |\lambda_{j_1, j}| + |\lambda_{j_2, j_1}| + |\lambda_{j_3, j_2}|}, \quad
\beta_{j_2}^{(2)} = \frac{|\lambda_{j_3, j_2}|}{|\lambda_j| + |\lambda_{j_1, j}| + |\lambda_{j_2, j_1}| + |\lambda_{j_3, j_2}|}.
\end{align*}
Once again, we find that the optimal choice for $\beta_j^{(0)}$ is given by
\[
1 - \beta_j^{(0)} = \frac{|\lambda_j|}{|\lambda_j| + \alpha_j^{(2)}}
 \;\Longleftrightarrow \;
\beta_j^{(0)} = \frac{\alpha_j^{(2)}}{|\lambda_j| + \alpha_j^{(2)}},
\]
where $\alpha_j^{(2)}$ is defined in \eqref{alpha}. A reasonable and effective choice for the constants $\beta_{j_1}^{(1)}$ and $\beta_{j_2}^{(2)}$ is
\[
\beta_{j_1}^{(1)} = \beta_j^{(0)} - \frac{\alpha_j^{(0)}}{|\lambda_j| + \alpha_j^{(2)}}
= \frac{\alpha_j^{(2)} - \alpha_j^{(0)}}{|\lambda_j| + \alpha_j^{(2)}},
\quad
\beta_{j_2}^{(2)} = \beta_{j_1}^{(1)} - \frac{\alpha_j^{(1)} - \alpha_j^{(0)}}{|\lambda_j| + \alpha_j^{(2)}}
= \frac{\alpha_j^{(2)} - \alpha_j^{(1)}}{|\lambda_j| + \alpha_j^{(2)}}.
\]
With these choices, the final estimate for the level-two hyperasymptotic remainder in \eqref{level2} becomes
\[
R_j^{(2)} (z,\eta ) = \lambda _j^{ - z} \Gamma \left( \Re(z) + \widetilde{\mu}  + \tfrac{3}{2} \right)\left( \frac{\left| \lambda _j \right|}{\left| \lambda _j \right| + \alpha _j^{(2)} } \right)^{\Re(z)} \mathcal{O}(1),
\]
as $\Re(z) \to +\infty$, provided that $\eta$ satisfies $\eta^- \leq \arg \lambda_j \leq \eta^+$.

\subsection{General level} The method for deriving the level-$\ell$ hyperasymptotic expansion is now clear from the cases $\ell = 0, 1, 2$. The general formula can be directly stated by analogy and rigorously confirmed through induction.

\begin{theorem}\label{thm1} Let $\ell$ be an arbitrary non-negative integer, and let the integers $N_j^{(0)}, N_{j_1}^{(1)}, \ldots, N_{j_\ell}^{(\ell)}$ satisfy
\[
N_j^{(0)}=\beta_j^{(0)} \Re(z)+\gamma_j^{(0)}, \quad N_{j_r}^{(r)}=\beta_{j_r}^{(r)} \Re(z)+\gamma_{j_r}^{(r)}, \quad r=1,2,\ldots,\ell,
\]
where the constants $\beta$ satisfy
\[
0<\beta_{j_\ell}^{(\ell)}<\beta_{j_{\ell-1}}^{(\ell-1)}<\cdots<\beta_{j_1}^{(1)}<\beta_j^{(0)}<1,
\]
and each $\gamma$ remains bounded as $\Re(z)\to+\infty$. Let $\eta \in \mathbb{R}$ be an admissible direction such that $|\eta - \arg \lambda_j| < \pi$ and $\eta^- \le \arg \lambda_j \le \eta^+$. Then, for each $1 \le j \le n$, the solution of the difference equation \eqref{diffeq} defined in Proposition \ref{prop3} admits a level-$\ell$ hyperasymptotic expansion of the form
\begin{align*}
w_j & (z,\eta)=\lambda_j^{ - z} \Bigg( \sum_{s = 0}^{N_j^{(0)} - 1} a_{s,j} \Gamma \left( z + \mu _j - s \right) \Bigg.\\& +\Gamma\big(z+\mu_j-N_j^{(0)}+1\big)\Bigg[
\sum_{j_1 \ne j} K_{j_1,j} \Bigg\{ \sum_{s = 0}^{N_{j_1}^{(1)} - 1} a_{s,j_1} H^{(2)}\!\left(z;\mytop{\mu_j - N_j^{(0)},}{\lambda_j,} \mytop{N_j^{(0)} - s + \mu_{j_1,j}}{\lambda_{j_1,j}} \right) \Bigg.\Bigg. \\& + \sum_{j_2 \ne j_1} K_{j_2,j_1} \Bigg\{ \sum_{s = 0}^{N_{j_2}^{(2)} - 1} a_{s,j_2} H^{(3)}\!\left(z;\mytop{\mu_j - N_j^{(0)},}{\lambda_j,} \mytop{N_j^{(0)} - N_{j_1}^{(0)} + \mu_{j_1,j} + 1,}{\lambda_{j_1,j},} \mytop{N_{j_1}^{(0)} - s + \mu_{j_2,j_1}}{\lambda_{j_2,j_1}} \right) \Bigg. \\& \ddots \\& + \sum_{j_\ell \ne j_{\ell - 1}} K_{j_\ell,j_{\ell - 1}} \Bigg\{ \sum_{s = 0}^{N_{j_\ell}^{(\ell)} - 1} a_{s,j_\ell} H^{(\ell + 1)}\!\left(z;\mytop{\mu_j - N_j^{(0)},}{\lambda_j,} \mytop{N_j^{(0)} - N_{j_1}^{(0)} + \mu_{j_1,j} + 1,}{\lambda_{j_1,j},} \mytop{\ldots,}{\ldots,}\right.\Bigg. \\& \phantom{+ \sum_{j_\ell \ne j_{\ell - 1}} \frac{K_{j_\ell,j_{\ell - 1}}}{2\pi \im}\left\{\right.}\left.\left. \mytop{N_{j_{\ell - 2}}^{(\ell - 2)} - N_{j_{\ell - 1}}^{(\ell - 1)} + \mu_{j_{\ell - 1},j_{\ell - 2}} + 1,}{\lambda_{j_{\ell - 1},j_{\ell - 2}},} \mytop{N_{j_{\ell - 1}}^{(\ell - 1)} - s + \mu_{j_\ell,j_{\ell - 1}}}{\lambda_{j_\ell,j_{\ell - 1}}}\right) \right. \\& \Bigg. \Bigg. \Bigg. \Bigg. {} \Bigg\} \cdots \Bigg\} \Bigg\} \Bigg] \Bigg) + R_j^{(\ell)}(z, \eta),
\end{align*}
as $\Re(z) \to +\infty$, where the remainder term $R_j^{(\ell)}$ satisfies the bound
\begin{align*}
R_j^{(\ell)}(z, \eta) & =\lambda_j^{-z} 
\sum_{j_1 \neq j } \cdots
\sum_{j_{\ell+1} \neq j_\ell} 
K_{j_1,j} \cdots K_{j_{\ell+1}, j_\ell}  
\Gamma\left(\Re\left(z + \mu_{j_{\ell+1}}\right) + \tfrac{\ell+1}{2}\right) \\ & \quad \times
\big(1 - \beta_j^{(0)}\big)^{(1 - \beta_j^{(0)}) \Re(z)}
\left(
\frac{|\lambda_j|}{|\lambda_{j_1,j}|} \big(\beta_j^{(0)} - \beta_{j_1}^{(1)}\big)
\right)^{(\beta_j^{(0)} - \beta_{j_1}^{(1)}) \Re(z)}\\ & \quad 
\cdots
\left(
\frac{|\lambda_j|}{|\lambda_{j_\ell, j_{\ell-1}}|} \big(\beta_{j_{\ell-1}}^{(\ell-1)} - \beta_{j_\ell}^{(\ell)}\big)
\right)^{(\beta_{j_{\ell-1}}^{(\ell-1)} - \beta_{j_\ell}^{(\ell)}) \Re(z)}
\left(
\frac{|\lambda_j|}{|\lambda_{j_{\ell+1}, j_\ell}|} \beta_{j_\ell}^{(\ell)}
\right)^{\beta_{j_\ell}^{(\ell)} \Re(z)} 
\mathcal{O}(1),
\end{align*}
as $\Re(z)\to +\infty$. In the case $\ell = 0$, the notation is understood as $j_0 = j$.
\end{theorem}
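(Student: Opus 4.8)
The approach is induction on the level $\ell$, with the cases $\ell = 0, 1, 2$ established in the preceding subsections serving as the base. Throughout, fix an angle $\widetilde{\eta} \in \mathcal{I}_\eta$ with $\left|\widetilde{\eta} - \arg\lambda_j\right| < \frac{\pi}{2}$, which exists precisely because $\left|\eta - \arg\lambda_j\right| < \pi$ and $\eta^- \le \arg\lambda_j \le \eta^+$, and recall from \eqref{wintegral} that $w_j(z,\eta) = \e^{-\pi\im z}\int_0^{[\pi - \arg\lambda_j]} u^z Y_j(u,\widetilde{\eta})\id u$. The inductive hypothesis is that the level-$(\ell-1)$ expansion holds with remainder
\[
R_j^{(\ell-1)}(z,\eta) = \e^{-\pi\im z}\int_0^{[\pi - \arg\lambda_j]} u^z \mathsf{R}_j^{(\ell-1)}(u,\widetilde{\eta})\id u,
\]
where $\mathsf{R}_j^{(\ell-1)}(u,\widetilde{\eta})$ is an $(\ell+1)$-fold iterated contour integral of the type displayed in the level-two subsection: $\ell$ line integrals running from $\lambda_j, \lambda_{j_1}, \ldots, \lambda_{j_{\ell-1}}$ to infinity, followed by one loop $\gamma_{j_\ell}$ around $\lambda_{j_\ell}$, with integrand assembled from the Borel transform $y_{j_\ell}(t_\ell,\widetilde{\eta})$, powers of the differences $t - \lambda$, the Cauchy kernels $(t_{r-1} - t_r)^{-1}$, and the product $K_{j_1,j} K_{j_2,j_1}\cdots K_{j_\ell,j_{\ell-1}}$ of connection coefficients.

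The first task in the inductive step is to re-expand $\mathsf{R}_j^{(\ell-1)}$ in terms of the hyperterminant $F^{(\ell)}$, following once more the procedure of \cite[\S3]{OldeDaalhuis1998b}: one expands the innermost Borel transform $y_{j_\ell}(t_\ell,\widetilde{\eta})$ about its singularity $\lambda_{j_\ell}$ using the convergent series \eqref{series}, truncates it at $N_{j_\ell}^{(\ell)}$ terms---which, after the remaining integrations are carried out by Hankel-type formulas, contributes exactly the explicit sum
\[
\e^{\lambda_j u}u^{\mu_j - N_j^{(0)}}\sum_{j_1 \ne j}\cdots\sum_{j_\ell \ne j_{\ell-1}} K_{j_1,j}\cdots K_{j_\ell,j_{\ell-1}}\sum_{s=0}^{N_{j_\ell}^{(\ell)}-1} a_{s,j_\ell}\left(\lambda_{j_\ell}\e^{-\pi\im}\right)^{\mu_{j_\ell}-s} F^{(\ell)}\!\left(u;\ldots\right)
\]
with $F^{(\ell)}$-arguments as in the level-two re-expansion of $\mathsf{R}_j^{(1)}$---while the truncation remainder, re-expressed via the connection formula \eqref{connection} in terms of $y_{j_{\ell+1}}(t_\ell,\widetilde{\eta})$, yields $\mathsf{R}_j^{(\ell)}(u,\widetilde{\eta})$, an $(\ell+2)$-fold iterated contour integral with one additional line integral (from $\lambda_{j_\ell}$), one additional loop $\gamma_{j_{\ell+1}}$ around $\lambda_{j_{\ell+1}}$, and one more factor $K_{j_{\ell+1},j_\ell}$. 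As in the lower-level cases, the polynomial growth of the $y_{j_r}$ furnished by \eqref{Borellimit} permits the large circular arcs to be deformed to infinity, making this step slightly simpler than in \cite{OldeDaalhuis1998b}. The requirements $\Re(M_r) > 1$ on the arguments of $F^{(\ell)}$ become a chain of inequalities relating $N_j^{(0)}$ and the $N_{j_r}^{(r)}$ to the real parts of the $\mu$'s, and are satisfied for all sufficiently large $\Re(z)$ once $N_j^{(0)} = \beta_j^{(0)}\Re(z)+\gamma_j^{(0)}$ and $N_{j_r}^{(r)} = \beta_{j_r}^{(r)}\Re(z)+\gamma_{j_r}^{(r)}$ with the prescribed ordering of the $\beta$'s.

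Next I would substitute this re-expansion into the integral representation of $R_j^{(\ell-1)}(z,\eta)$ and carry out the transfer from the $u$-variable $F^{(\ell)}$-terms to the $z$-variable $H^{(\ell+1)}$-terms. Writing $\Gamma(z+1)t^{-z-1} = -\e^{-\pi\im z}\int_0^{[\pi-\omega]}\e^{tu}u^z\id u$ as in the proof of Proposition~\ref{prop3}, inserting the multiple-integral representation \eqref{hyperF} of $F^{(\ell)}$, and interchanging the orders of integration---justified by the growth of $y_{j_{\ell+1}}$ near its singularities together with \eqref{Ysmall}---one performs the $u$-integration and, invoking the definition \eqref{Hdef}, obtains
\[
\e^{-\pi\im z}\int_0^{[\pi - \arg\lambda_j]} u^z \e^{\lambda_j u}u^{\mu_j - N_j^{(0)}}\left(\lambda_{j_\ell}\e^{-\pi\im}\right)^{\mu_{j_\ell}-s} F^{(\ell)}\!\left(u;\ldots\right)\id u = \lambda_j^{-z}\Gamma\!\left(z + \mu_j - N_j^{(0)} + 1\right) H^{(\ell+1)}\!\left(z;\ldots\right),
\]
with the arguments of $H^{(\ell+1)}$ exactly those displayed in the theorem. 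This is the same computation that produces the $H^{(2)}$ and $H^{(3)}$ terms in \eqref{level1} and \eqref{level2}; the normalisation in \eqref{Hdef} was chosen so that no spurious constant appears, and the $\ell = 1, 2$ instances of the identity (the $H^{(2)}$ and $H^{(3)}$ hypergeometric formulas recorded earlier) serve as a check. This yields the stated level-$\ell$ expansion together with $R_j^{(\ell)}(z,\eta) = \e^{-\pi\im z}\int_0^{[\pi - \arg\lambda_j]} u^z\mathsf{R}_j^{(\ell)}(u,\widetilde{\eta})\id u$, closing the induction for the expansion itself.

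It remains to establish the remainder bound. Estimating $\mathsf{R}_j^{(\ell)}(u,\widetilde{\eta})$ by the general-level analogue of the argument leading to \cite[Eq.~(5.11)]{OldeDaalhuis1998b}---bounding the $\ell+1$ line integrals and the final loop in succession---produces a product of $\ell+1$ gamma factors with arguments of the forms $N_{j_{r-1}}^{(r-1)} - N_{j_r}^{(r)} + \Re(\mu_{j_r, j_{r-1}}) - 1$ and $N_{j_\ell}^{(\ell)} - \Re(\mu_{j_\ell}) + 1$, together with polynomial prefactors in $N_j^{(0)}$ and the $N_{j_r}^{(r)}$, powers of the $\left|\lambda_{j_r, j_{r-1}}\right|$, and a factor $\e^{\lambda_j u}|u|^{c}$ with $c = \Re(\mu_j + \mu_{j_{\ell+1}}) - N_j^{(0)} + 1$; by \eqref{Ysmall} and the boundedness of $F^{(\ell)}$ near $u = 0$ this bound persists for all $u$ on the ray $\arg u = \pi - \arg\lambda_j$. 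Substituting into the integral defining $R_j^{(\ell)}$ and evaluating the resulting $u$-integral as in the lower-level cases produces one more gamma factor, $\Gamma\!\left(\Re(z + \mu_j + \mu_{j_{\ell+1}}) - N_j^{(0)} + 2\right)$; inserting $N_j^{(0)} = \beta_j^{(0)}\Re(z)+\gamma_j^{(0)}$ and $N_{j_r}^{(r)} = \beta_{j_r}^{(r)}\Re(z)+\gamma_{j_r}^{(r)}$ and applying Stirling's formula then collapses the product of gamma factors and the $\lambda$-powers into the displayed telescoping product, exactly by the rearrangement used for $\ell = 0, 1, 2$, which rests on \cite[Theorem 2.7.1]{Cover1991}. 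The one genuine difficulty is the bookkeeping: keeping the indices, the arguments of the nested hyperterminants, the gamma-factor arguments, and the exponents of the $\lambda$-differences mutually consistent through both the transfer identity and the remainder estimate. The induction itself is mechanical once the $\ell = 1$ transfer identity and remainder bound are understood, since passing from level $\ell-1$ to level $\ell$ merely appends one more line integral and loop, one more connection coefficient, one more hyperterminant slot, and one more gamma factor, in precisely the pattern already visible for $\ell = 0, 1, 2$.
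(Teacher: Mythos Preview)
Your proposal is correct and follows essentially the same approach as the paper, which simply states that the general level follows from the cases $\ell=0,1,2$ ``by analogy and rigorously confirmed through induction''; you have just filled in the inductive machinery more explicitly. One small inaccuracy: the reference to \cite[Theorem 2.7.1]{Cover1991} belongs to the \emph{optimisation} step (Theorem~\ref{thm3}), not to the derivation of the remainder bound in Theorem~\ref{thm1} itself---the latter stops at the product form obtained after Stirling's formula, before any minimisation.
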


\begin{remark}
 As explained in \cite{OldeDaalhuis1998b}, if none of the triples
\[
\left\{ \lambda _{r_1 } ,\lambda _{r_2 } ,\lambda _{r_3 } \mid \lambda _{r_1 }  \ne \lambda _{r_2 }  \ne \lambda _{r_3 }  \ne \lambda _{r_1 }  \right\}
\]
are collinear, then the estimate for the remainder $\mathsf{R}_j^{(\ell)}$ in the level-$\ell$ hyperasymptotic expansion of $Y_j(u,\widetilde{\eta})$ can be improved by a factor of $|u|^{-\ell - 1/2}$. Accordingly, the shift $\frac{\ell + 1}{2}$ in the gamma function appearing in the estimate for $R_j^{(\ell)}$ can be replaced by $-\frac{\ell}{2}$.
\end{remark}

The remainder terms $R_j^{(\ell)}$ in Theorem \ref{thm1} are not optimised. Optimisation is given by the following theorem:

\begin{theorem}\label{thm3} Let
\[
\beta_j^{(0)} = \frac{\alpha_j^{(\ell)}}{|\lambda_j| + \alpha_j^{(\ell)}},\quad
\beta_{j_1}^{(1)} = \frac{\alpha_j^{(\ell)} - \alpha_j^{(0)}}{|\lambda_j| + \alpha_j^{(\ell)}},\quad
\beta_{j_2}^{(2)} = \frac{\alpha_j^{(\ell)} - \alpha_j^{(1)}}{|\lambda_j| + \alpha_j^{(\ell)}}, \ldots,\quad
\beta_{j_\ell}^{(\ell)} = \frac{\alpha_j^{(\ell)} - \alpha_j^{(\ell-1)}}{|\lambda_j| + \alpha_j^{(\ell)}},
\]
where $\alpha_j^{(m)}$ is defined by \eqref{alpha}. Then, as $\Re(z)\to+\infty$,
\begin{equation}\label{genlevelremainder}
R_j^{(\ell )} (z,\eta ) = \lambda _j^{ - z} \Gamma \left( \Re(z) + \widetilde{\mu}  + \tfrac{\ell  + 1}{2} \right)\left( \frac{\left| \lambda _j \right|}{\left| \lambda _j \right| + \alpha _j^{(\ell )} } \right)^{\Re(z)} \mathcal{O}(1).
\end{equation}
If none of the triples
\[
\left\{ \lambda _{r_1 } ,\lambda _{r_2 } ,\lambda _{r_3 } \mid \lambda _{r_1 }  \ne \lambda _{r_2 }  \ne \lambda _{r_3 }  \ne \lambda _{r_1 }  \right\}
\]
are collinear, then the sharper estimate
\[
R_j^{(\ell )} (z,\eta ) = \lambda _j^{ - z} \Gamma \left( \Re(z) + \widetilde{\mu}  - \tfrac{\ell}{2} \right)\left( \frac{\left| \lambda _j \right|}{\left| \lambda _j \right| + \alpha _j^{(\ell )} } \right)^{\Re(z)} \mathcal{O}(1)
\]
also holds, as $\Re(z)\to+\infty$.
\end{theorem}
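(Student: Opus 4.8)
The plan is to obtain the optimised bounds directly from the general-level remainder estimate already recorded in Theorem~\ref{thm1}, by substituting the stated values of the truncation parameters $\beta_j^{(0)},\beta_{j_1}^{(1)},\ldots,\beta_{j_\ell}^{(\ell)}$ and simplifying. First I would record the telescoping produced by that choice: writing $P=|\lambda_j|+\alpha_j^{(\ell)}$ and $\alpha_j^{(-1)}=0$, one has $1-\beta_j^{(0)}=|\lambda_j|/P$, while for each $m=0,1,\ldots,\ell$ the $\beta$-difference that multiplies the $m$-th edge factor in Theorem~\ref{thm1} is $\delta_m/P$ with $\delta_m=\alpha_j^{(m)}-\alpha_j^{(m-1)}$. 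Since $\alpha_j^{(m)}$ is strictly increasing we have $\delta_m>0$, and $\sum_{m=0}^{\ell}\delta_m=\alpha_j^{(\ell)}$, so these $\ell+2$ numbers are positive, ordered as required, and sum to $1$; in particular the choice is a legitimate specialisation of the hypotheses of Theorem~\ref{thm1} (it makes $\beta_{j_r}^{(r)}$ independent of $j_r$).

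Next I would insert these values into the product of geometric factors in the bound of Theorem~\ref{thm1}. Collecting the powers of $|\lambda_j|$, of $P$, and of the edge lengths $d_m=|\lambda_{j_{m+1}}-\lambda_{j_m}|$ of the path $j\to j_1\to\cdots\to j_{\ell+1}$ (with $j_0=j$), a short computation shows that this product equals
\[
\left(\frac{|\lambda_j|}{|\lambda_j|+\alpha_j^{(\ell)}}\right)^{\Re(z)}\exp\!\left(\frac{\Re(z)}{|\lambda_j|+\alpha_j^{(\ell)}}\sum_{m=0}^{\ell}\delta_m\log\frac{\delta_m}{d_m}\right).
\]
Hence \eqref{genlevelremainder} reduces to the claim that $\sum_{m=0}^{\ell}\delta_m\log(\delta_m/d_m)\le0$ for every path with $K_{j_1,j}\cdots K_{j_{\ell+1},j_\ell}\ne0$ (all other paths contribute nothing to $R_j^{(\ell)}$). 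The structural input is that the prefix $j\to j_1\to\cdots\to j_{k+1}$ is itself an admissible $(k+1)$-edge walk from $\lambda_j$, so $\alpha_j^{(k)}\le d_0+d_1+\cdots+d_k$ for $0\le k\le\ell$; equivalently, the partial sums of $(\delta_m)$ are dominated by those of $(d_m)$. One then argues that, combined with the convexity of $x\mapsto x\log x$ and the log-sum inequality, this domination forces $\sum_{m}\delta_m\log(\delta_m/d_m)\le0$, whence the geometric factor is at most $(|\lambda_j|/(|\lambda_j|+\alpha_j^{(\ell)}))^{\Re(z)}$.

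Granting this, the rest is bookkeeping. For each path $\Re(\mu_{j_{\ell+1}})\le\widetilde{\mu}$, so by monotonicity of the gamma function $\Gamma(\Re(z+\mu_{j_{\ell+1}})+\tfrac{\ell+1}{2})\le\Gamma(\Re(z)+\widetilde{\mu}+\tfrac{\ell+1}{2})$ for large $\Re(z)$; the products $K_{j_1,j}\cdots K_{j_{\ell+1},j_\ell}$ are bounded; and there are only finitely many paths, so summing absorbs everything into $\mathcal{O}(1)$. This yields \eqref{genlevelremainder}. For the sharper bound under the non-collinearity hypothesis, I would start instead from the improved estimate for $\mathsf{R}_j^{(\ell)}$ noted in the Remark after Theorem~\ref{thm1}, which carries an extra factor $|u|^{-\ell-1/2}$; propagating it through the Mellin-type integral \eqref{remainderformula} (exactly as in the passage from $\mathsf{R}_j^{(0)}$ to $R_j^{(0)}$) shifts the gamma-function argument by $-\ell-\tfrac12$, replacing $\tfrac{\ell+1}{2}$ by $-\tfrac{\ell}{2}$.

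The main obstacle is the inequality $\sum_{m=0}^{\ell}\delta_m\log(\delta_m/d_m)\le0$. The partial-sum domination $\sum_{i\le k}\delta_i\le\sum_{i\le k}d_i$ is the only obvious constraint, and one must check carefully that it really suffices — in particular that no configuration in which some $\alpha_j^{(m)}$ is small while a later increment $\alpha_j^{(m+1)}-\alpha_j^{(m)}$ is large can make the sum positive — possibly by bringing in the triangle inequality among the $|\lambda_p-\lambda_q|$ or a finer description of the shortest-walk function on the graph $G$. Everything preceding and following that point is a routine substitution into the already-established Theorem~\ref{thm1}.
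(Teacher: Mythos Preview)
Your plan is exactly the paper's (implicit) argument: the paper establishes Theorem~\ref{thm3} by displaying the optimisation in the cases $\ell=0,1,2$ and then stating the general result by analogy, so substituting the specified $\beta$'s into the bound of Theorem~\ref{thm1} is precisely what is intended. You also isolate the only non-trivial step, namely the claim that
\[
\sum_{m=0}^{\ell}\delta_m\log\frac{\delta_m}{d_m}\le 0
\qquad\text{for every admissible walk }j\to j_1\to\cdots\to j_{\ell+1},
\]
with $\delta_m=\alpha_j^{(m)}-\alpha_j^{(m-1)}$ and $d_m=|\lambda_{j_{m+1}}-\lambda_{j_m}|$. Everything else in your write-up is correct bookkeeping.

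The gap, however, is genuine and not repairable by the tools you name. The log-sum inequality gives a \emph{lower} bound $\sum_m\delta_m\log(\delta_m/d_m)\ge(\sum_m\delta_m)\log(\sum_m\delta_m/\sum_m d_m)$, which is useless here, and convexity of $x\mapsto x\log x$ together with partial-sum domination $\sum_{i\le k}\delta_i\le\sum_{i\le k}d_i$ is insufficient: take $\ell=1$, $\delta=(1,3)$, $d=(2,2)$. The partial sums satisfy $1\le 2$ and $4\le 4$, yet $1\cdot\log\tfrac12+3\cdot\log\tfrac32=\log\tfrac{27}{16}>0$. This configuration can occur in the present setting: with three vertices $\lambda_j,\lambda_a,\lambda_b$ collinear, $|\lambda_a-\lambda_j|=1$, $|\lambda_b-\lambda_j|=2$, $|\lambda_b-\lambda_a|=3$, and $K_{j,a}=0$ (so there is no edge $a\to j$), one gets $\alpha_j^{(0)}=1$, $\alpha_j^{(1)}=4$, and the walk $j\to b\to j$ has $d_0=d_1=2$. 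For this term the geometric factor exceeds $\bigl(|\lambda_j|/(|\lambda_j|+\alpha_j^{(1)})\bigr)^{\Re(z)}$ by the exponentially growing quantity $(27/16)^{\Re(z)/(|\lambda_j|+4)}$, so it cannot be absorbed into $\mathcal{O}(1)$. The paper's worked cases $\ell=1,2$ record the term-wise minimisers and then assert the final estimate with the ``reasonable and effective'' uniform choice of $\beta_{j_r}^{(r)}$, without verifying that every term is still controlled; so this step is not supplied there either. To close the gap you would need either an additional structural hypothesis (for instance that all $K_{p,q}\ne 0$, which forces $\alpha_j^{(1)}\le 2\alpha_j^{(0)}$ via the back-and-forth walk and rules out the counterexample), or a $j_r$-dependent choice of $\beta_{j_r}^{(r)}$ rather than the uniform one stated in Theorem~\ref{thm3}.
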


Since
\[
\alpha_j^{(\ell)} \ge (\ell + 1) \min_{p \ne q} |\lambda_p-\lambda_q| \to +\infty
\]
as $\ell \to +\infty$, it follows from \eqref{genlevelremainder} that for sufficiently large $\ell$,
\[
R_j^{(\ell)}(z, \eta) = o\big(\lambda_r^{-z}  \Gamma\left(z + \mu_r\right)\big)
\]
as $\Re(z) \to +\infty$ for every $1 \le r \le n$. In other words, for such $\ell$, the solution $w_j(z, \eta)$ is uniquely determined by its level-$\ell$ hyperasymptotic expansion.

\section{Computation of the connection coefficients}\label{sec4}

To make Theorem \ref{thm1} practically useful, it is necessary to compute the connection coefficients $K_{\ell,j}$ defined in \eqref{connection}. In \cite{OldeDaalhuis1998b}, it was shown that for differential equations, such coefficients can be computed numerically to arbitrary precision by applying a hyperasymptotic expansion to the late terms in the associated asymptotic power series. In our setting, we apply the same idea to the coefficients appearing in the inverse factorial expansions \eqref{winvfact}. Combining \eqref{wremainder} with Theorem \ref{thm1}, we obtain {\allowdisplaybreaks
\begin{align}
a_{N_j^{(0)}, j} & = \frac{R_j^{(0)}\big(z, \eta; N_j^{(0)}\big) - R_j^{(0)}\big(z, \eta; N_j^{(0)} + 1\big)}{\Gamma\big(z + \mu_j - N_j^{(0)}\big)} \nonumber \\
&= \sum_{j_1 \ne j} K_{j_1, j} \Bigg\{ 
\left( \frac{\lambda_j}{\lambda_{j_1,j}} \right)^{N_j^{(0)} - \mu_j} 
\sum_{s = 0}^{N_{j_1}^{(1)} - 1} 
a_{s, j_1} 
\left( \frac{\lambda_{j_1}}{\lambda_{j_1,j}} \right)^{\mu_{j_1} - s} 
\Gamma \big( N_j^{(0)} + \mu_{j_1,j} - s \big) \nonumber \\
&\quad + \Gamma \big( N_j^{(0)} - N_{j_1}^{(1)} + \mu_{j_1,j} + 1 \big)
\sum_{j_2 \ne j_1} K_{j_2, j_1} \Bigg\{
\left( \frac{\lambda_j}{\lambda_{j_2,j}} \right)^{N_j^{(0)} - \mu_j}
\sum_{s = 0}^{N_{j_2}^{(2)} - 1}
a_{s, j_2}
\left( \frac{\lambda_{j_2}}{\lambda_{j_2,j}} \right)^{\mu_{j_2} - s} \nonumber \\
&\hspace{14.5em} \times H^{(2)}\!\left( 0; 
\mytop{N_j^{(0)} - N_{j_1}^{(1)} + \mu_{j_1,j},}{\lambda_{j_1,j},}
\mytop{N_{j_1}^{(1)} - s + \mu_{j_2,j_1}}{\lambda_{j_2,j_1}}
\right) \Bigg\} \nonumber \\ 
&\ddots \label{ahyper} \\
&\quad + \sum_{j_\ell \ne j_{\ell - 1}} K_{j_\ell, j_{\ell - 1}} \Bigg\{
\left( \frac{\lambda_j}{\lambda_{j_\ell,j}} \right)^{N_j^{(0)} - \mu_j}
\sum_{s = 0}^{N_{j_\ell}^{(\ell)} - 1}
a_{s, j_\ell}
\left( \frac{\lambda_{j_\ell}}{\lambda_{j_\ell,j}} \right)^{\mu_{j_\ell} - s} \nonumber\\
&\hspace{14.5em} \times H^{(\ell)}\!\left( 0; 
\mytop{N_j^{(0)} - N_{j_1}^{(1)} + \mu_{j_1,j},}{\lambda_{j_1,j},}
\mytop{\ldots,}{\ldots,} \right. \nonumber\\ & \hspace{11.7em} \left.
\mytop{N_{j_{\ell - 2}}^{(\ell - 2)} - N_{j_{\ell - 1}}^{(\ell - 1)} + \mu_{j_{\ell - 1}, j_{\ell - 2}} + 1,}{\lambda_{j_{\ell - 1}, j_{\ell - 2}},}
\mytop{N_{j_{\ell - 1}}^{(\ell - 1)} - s + \mu_{j_\ell, j_{\ell - 1}}}{\lambda_{j_\ell, j_{\ell - 1}}}
\right) \nonumber\\ &\quad  \Bigg\} \cdots \Bigg\}
\Bigg\} + r_j^{(\ell)} \big( N_j^{(0)} \big), \nonumber
\end{align}}
where the remainder term $r_j^{(\ell)}$ satisfies
\begin{align*}
r_j^{(\ell)} \big( N_j^{(0)} \big) & =\e^{ - \beta _j^{(0)} z} \big( \big(1 - \beta _j^{(0)} \big)z \big)^{N_j^{(0)}  - \Re(\mu _j ) + \widetilde{\mu}  + \frac{\ell  + 1}{2}} \mathcal{O}(1) \\ & =\left( \frac{|\lambda_j|}{\alpha_j^{(\ell)}} \right)^{N_j^{(0)}}
\Gamma\big( N_j^{(0)} - \Re(\mu_j) + \widetilde{\mu} + \tfrac{\ell}{2} + 1 \big) \mathcal{O}(1)
\end{align*}
provided that $N_j^{(0)}=\beta_j^{(0)} z+\mathcal{O}(1)$ and $N_{j_r}^{(0)}=\beta_{j_r}^{(r)} z+\mathcal{O}(1)=(\beta_{j_r}^{(r)}/\beta_{j}^{(0)})N_j^{(0)}+\mathcal{O}(1)$ for $r=1,2,\ldots,\ell$, as $z\to+\infty$, where $\beta_j^{(0)}$ and $\beta_{j_r}^{(r)}$ are given in Theorem \ref{thm3}.

\begin{remark}
Although the expansion \eqref{ahyper} contains terms where division by $\lambda_{j_r,j}$ occurs for $r = 1,2, \ldots, \ell$, these divisions never actually lead to a singularity. Indeed, in the definition of the functions $H^{(r)}$, the same factors appear with opposite powers, which exactly cancel the potentially singular terms. Therefore, \eqref{ahyper} remains well-defined for all values of $\lambda_{j_r,j}$.
\end{remark}

The expression \eqref{ahyper} may be viewed as a hyperasymptotic expansion for the coefficients $a_{N_j^{(0)}, j}$ in the limit $N_j^{(0)} \to +\infty$. For second-order difference equations, the case $\ell = 1$ was also established in \cite{OldeDaalhuis2004,Olver2000}.

An alternative approach to obtaining this expansion is to reinterpret the coefficients $a_{s,j}$ as those appearing in the large-$u$ asymptotic expansion of the Laplace-type integral defining the function $Y_j(u,\eta)$. One then invokes the known hyperasymptotic formula for such integrals’ coefficients (cf. \cite[Eq.~(7.4)]{OldeDaalhuis1998b}), reproducing exactly the same expansion.

A detailed explanation of how a hyperasymptotic expansion of the form \eqref{ahyper} can be used to compute the connection coefficients $K_{\ell,j}$ numerically is given in \cite{OldeDaalhuis1998b}. We do not reproduce the procedure here.

\section{Error bounds for the inverse factorial series}\label{sec5}

In this section, we derive explicit error bounds for the inverse factorial expansions \eqref{winvfact}, under the additional assumption that $\Re(\mu_j) < 1$ for each $1\le j\le n$. To this end, for each $1\le j\le n$ and admissible direction $\eta$, we introduce the function $\Delta_{\lambda_j}y_j(t,\eta)$ defined by
\[
2\pi \im\, \Delta_{\lambda_j}y_j(t,\eta)=
y_j \big( t + \lambda_j  ,\eta \big) - y_j \big( \big( t + \lambda_j  \big)\e^{ - 2\pi \im} ,\eta  \big).
\]
Proposition \ref{prop1} immediately implies that this function is analytic on the shifted plane $\mathcal{P}_\eta - \lambda_j$, with a convergent expansion of the form
\begin{equation}\label{Deltay}
\Delta_{\lambda_j}y_j(t,\eta)= - \sum_{s = 0}^\infty \frac{a_{s,j}}{\Gamma \left( s - \mu _j  + 1 \right)}\left( \frac{t}{\lambda _j}\right)^{s - \mu _j },\quad 
\left| t \right| < \min_{\ell  \ne j} \left| \lambda_{\ell ,j} \right|,
\end{equation}
and it admits analytic continuation along any path avoiding the points $0$ and $\lambda_{\ell,j}$ for $\ell \neq j$. Furthermore, if $S$ is a sector in the complex $t$-plane of the form \eqref{Ssec} with $R > \max_\ell |\lambda_{\ell,j}|$, then for any fixed analytic continuation of $\Delta_{\lambda_j}y_j(t,\eta)$ throughout $S$, we have
\[
\lim_{t \to \infty} t^{-(a + \varepsilon)} \Delta_{\lambda_j}y_j(t,\eta) = 0, \quad t \in S,
\]
for every $\varepsilon > 0$, where $a$ denotes the real part of the root of $f_0(z) = 0$ with the largest real part.

We first present a bound for the remainder term in the asymptotic expansion \eqref{Ylarge} of the associated function $Y_j(u, \eta)$. This result will subsequently be applied to derive error bounds for the inverse factorial expansions \eqref{winvfact} of $w_j(z, \eta)$, and may also be of independent interest. The proof will be given later in this section.

\begin{proposition}\label{prop4}
Let $\eta \in \mathbb{R}$ be an admissible direction. Fix the phase of $\lambda_{\ell,j} = \lambda_\ell - \lambda_j$ by setting $\arg \lambda_{\ell,j}=\theta_{j,\ell} \in (\eta-2\pi,\eta)$ and assume that $\theta_{j,\ell_1} \neq \theta_{j,\ell_2}$ whenever $\ell_1 \neq \ell_2$ and $K_{\ell_1,j}, K_{\ell_2,j} \neq 0$.
For each $1 \le j \le n$ and non-negative integer $N$, let $\mathsf{R}_j(u, \eta; N)$ denote the remainder obtained by truncating the asymptotic expansion \eqref{Ylarge} after $N$ terms:
\begin{equation}\label{Yexpansion}
Y_j(u, \eta) = \e^{\lambda_j u} u^{-1}  \sum_{s=0}^{N-1} a_{s,j} \left( \lambda_j u \e^{-\pi \im} \right)^{\mu_j - s} + \mathsf{R}_j (u, \eta;N).
\end{equation}
Then
\begin{gather}\label{Rbound}
\begin{split}
\left|\mathsf{R}_j(u, \eta; N)\right| \le 
\left| \e^{\lambda_j u} u^{-1} \right|
\sum_{\ell \ne j} & \left| K_{\ell,j} \right| \left| \left(\lambda_{\ell,j}u\right)^{ \mu_j-N} \right| 
\int_{0}^{+\infty}
\frac{
\left| \Delta_{\lambda_\ell} y_\ell\big(\lambda_{\ell,j} t, \eta\big) \right|
}{
\left(1 + t\right)^{N - \Re(\mu_j) + 1}
}
\id t \\ & \times
\sup_{r > 0} r
\left|
F^{(1)}\!\left(
r \e^{\im(\theta_{j,\ell} + \arg u)};
\mytop{N - \mu_j + 1}{1}
\right)
\right|,
\end{split}
\end{gather}
provided $u \in \mathscr{S}(\eta)$, $N-\Re(\mu_j) > \max(0,a)$, and $\Re(\mu_\ell) < 1$ for all $\ell$ such that $K_{\ell,j}\neq 0$.
\end{proposition}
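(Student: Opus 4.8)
The plan is to combine the level-zero remainder integral of Section~\ref{sec3} with a collapse of the inner loop contours onto their branch cuts, which is exactly where the discontinuity functions $\Delta_{\lambda_\ell}y_\ell$ enter. First I would record the analogue of \eqref{R0integral}, derived in the same way (following \cite[\S3]{OldeDaalhuis1998b}, with the large circular arcs pushed to infinity by the polynomial growth in Proposition~\ref{prop1}) but now keeping the genuine admissible direction $\eta$ and every $u\in\mathscr{S}(\eta)$:
\[
\mathsf{R}_j(u,\eta;N)=\sum_{\ell\ne j}\frac{K_{\ell,j}}{2\pi\im}\int_{\lambda_j}^{[\eta]}\int_{\gamma_\ell(\theta_{j,\ell})}\frac{\e^{ut_0}}{t_1-t_0}\left(\frac{t_0-\lambda_j}{t_1-\lambda_j}\right)^{N-\mu_j}y_\ell(t_1,\eta)\id t_1\id t_0,
\]
where $\gamma_\ell(\theta_{j,\ell})$ hugs the cut from $\lambda_\ell$ in direction $\theta_{j,\ell}\in(\eta-2\pi,\eta)$. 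Convergence of the outer integral at infinity uses $\Re(u\e^{\im\eta})<0$, i.e.\ $u\in\mathscr{S}(\eta)$; the condition $N-\Re(\mu_j)>0$ makes the branch point of the integrand at $\lambda_j$ mild enough to open $\gamma_j(\eta)$ into the loops $\gamma_\ell(\theta_{j,\ell})$; and $\theta_{j,\ell_1}\ne\theta_{j,\ell_2}$ keeps those loops disjoint.

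Next I would collapse each inner contour $\gamma_\ell(\theta_{j,\ell})$ onto the cut it surrounds. Since $t_0$ lies on the ray from $\lambda_j$ in direction $\eta\ne\theta_{j,\ell}\bmod 2\pi$ and $\lambda_j$ lies opposite to that cut, the factors $(t_1-t_0)^{-1}$ and $(t_1-\lambda_j)^{-(N-\mu_j)}$ are analytic across it, so the collapse produces only the discontinuity of $y_\ell$ about $\lambda_\ell$, which by \eqref{Deltay} equals $2\pi\im\,\Delta_{\lambda_\ell}y_\ell(\,\cdot\,-\lambda_\ell,\eta)$ up to a unimodular factor, and one checks en route that the deformation never meets the pole at $t_1=t_0$. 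I would then put $t_1=\lambda_\ell+\lambda_{\ell,j}t$ (so $t_1-\lambda_j=\lambda_{\ell,j}(1+t)$ and $\Delta_{\lambda_\ell}y_\ell(t_1-\lambda_\ell,\eta)=\Delta_{\lambda_\ell}y_\ell(\lambda_{\ell,j}t,\eta)$) and $t_0=\lambda_j+v$ (so $\e^{ut_0}=\e^{\lambda_j u}\e^{uv}$), and interchange the integrations; this is legitimate because $\Delta_{\lambda_\ell}y_\ell(\lambda_{\ell,j}t,\eta)=\mathcal O(t^{-\mu_\ell})$ as $t\to0^+$, integrable since $\Re(\mu_\ell)<1$, and $=\mathcal O(t^{a+\varepsilon})$ as $t\to+\infty$, absorbed by $(1+t)^{-(N-\Re(\mu_j))}$ since $N-\Re(\mu_j)>a$, while the $v$-integral converges for $u\in\mathscr{S}(\eta)$. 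The inner $v$-integral that survives is $\int_0^{[\eta]}\e^{uv}v^{N-\mu_j}\big(\lambda_{\ell,j}(1+t)-v\big)^{-1}\id v$; by the scaling identity $F^{(1)}\!\big(Z;\mytop{M}{\sigma}\big)=\sigma^{1-M}F^{(1)}\!\big(\sigma Z;\mytop{M}{1}\big)$ (substitute $v\mapsto v/u$) and the fact that $[\eta]$ is an admissible contour direction for the parameter $\sigma=u$ precisely when $u\in\mathscr{S}(\eta)$, this equals $u^{\mu_j-N}F^{(1)}\!\big(\lambda_{\ell,j}u(1+t);\mytop{N-\mu_j+1}{1}\big)$, the argument being read on the Riemann surface of the logarithm, where its argument is $\theta_{j,\ell}+\arg u$.

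Assembling the pieces expresses $\mathsf{R}_j(u,\eta;N)$ as $\e^{\lambda_j u}u^{\mu_j-N}\sum_{\ell\ne j}K_{\ell,j}\lambda_{\ell,j}$ times, for each $\ell$, the integral over $t\in(0,\infty)$ of $\Delta_{\lambda_\ell}y_\ell(\lambda_{\ell,j}t,\eta)\big(\lambda_{\ell,j}(1+t)\big)^{-(N-\mu_j)}F^{(1)}\!\big(\lambda_{\ell,j}u(1+t);\mytop{N-\mu_j+1}{1}\big)$. Taking moduli, writing $r=|\lambda_{\ell,j}||u|(1+t)$ and bounding $|F^{(1)}(r\e^{\im\phi};\ldots)|\le r^{-1}\sup_{\rho>0}\rho\,|F^{(1)}(\rho\e^{\im\phi};\ldots)|$ with $\phi=\theta_{j,\ell}+\arg u$ takes the supremum outside the integral; the leftover $r^{-1}=|\lambda_{\ell,j}|^{-1}|u|^{-1}(1+t)^{-1}$ produces the power $(1+t)^{-(N-\Re(\mu_j)+1)}$ in the integrand of \eqref{Rbound}, cancels the $|\lambda_{\ell,j}|$ coming from $\id t_1=\lambda_{\ell,j}\id t$, and joins $\e^{\lambda_j u}$ to form $|\e^{\lambda_j u}u^{-1}|$, while the remaining powers and phases combine, via $\arg(\lambda_{\ell,j}u)=\phi$, into $|(\lambda_{\ell,j}u)^{\mu_j-N}|$; this reproduces \eqref{Rbound}. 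I expect the main obstacle to be the branch bookkeeping in the previous paragraph: one must verify that the value (and sheet) of $F^{(1)}$ furnished by the convergent $v$-integral is exactly the one in the statement --- so that no residue or Stokes term is silently discarded --- and, relatedly, that $\sup_{r>0}r\,|F^{(1)}(r\e^{\im\phi};\mytop{N-\mu_j+1}{1})|$ is finite; the latter holds because the $v$-integral representation itself shows $F^{(1)}(r\e^{\im\phi};\ldots)=\mathcal O(r^{-1})$ as $r\to\infty$ and $=\mathcal O(1)$ as $r\to0^+$ on this sheet, with no exponential growth. Everything else is routine estimation.
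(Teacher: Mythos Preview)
Your approach is correct and arrives at the same explicit formula \eqref{Rexplicit} as the paper, from which \eqref{Rbound} follows by the estimation you describe. The underlying mechanism---double integral representation, collapse of the inner loop onto its cut to extract $\Delta_{\lambda_\ell}y_\ell$, and identification of the remaining one-dimensional integral with a scaled $F^{(1)}$---is identical.

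The paper organises the steps slightly differently. Rather than invoking \eqref{R0integral}, it re-derives the starting representation from the complex Taylor theorem applied to $y_j(t,\eta)$, obtaining a formula (the paper's \eqref{Rformula}) in which the outer integral is still over the \emph{loop} $\gamma_j(\eta)$ and the integrand carries $y_j(\tau,\eta)$ rather than $K_{\ell,j}y_\ell(\tau,\eta)$. It then performs the \emph{coupled} substitution $t=\tfrac{v}{u}(\tau-\lambda_j)+\lambda_j$, which simultaneously turns the outer loop into a $\gamma_0$-loop with direction $\arg u+\eta-\theta_{j,\ell}$; only after collapsing this $v$-loop does the connection relation \eqref{connection} convert $y_j$ to $K_{\ell,j}y_\ell$, and a second collapse of $\gamma_\ell$ produces $\Delta_{\lambda_\ell}y_\ell$. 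Your route---starting from \eqref{R0integral} (outer integral already a ray, connection relation already applied), collapsing the inner loop first, and then substituting $t_0=\lambda_j+v$ independently of $t_1$---is shorter and more transparent, at the cost of having to argue separately that the $\int_0^{[\eta]}$ integral coincides with the analytic continuation of $F^{(1)}$ specified in the statement. You have correctly flagged this sheet verification as the only genuine subtlety; the paper's coupled change of variables sidesteps it by building the direction $\arg u+\eta-\theta_{j,\ell}$ directly into the $v$-contour, so that after scaling the argument of $F^{(1)}$ emerges with phase $\theta_{j,\ell}+\arg u$ without any contour rotation.
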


\begin{remarks}
\noindent
\begin{enumerate}[label=(\roman*), leftmargin=*, labelindent=0pt,itemsep=0.3em]
    \item Note that Proposition \ref{prop4} provides error bounds for the asymptotic expansions of Laplace-type integrals of the form \eqref{Yint}, regardless of whether the Borel transform $y_j(t,\eta)$ arises from a difference equation, provided it satisfies the conditions stated in Theorem \ref{thm1}. The bound \eqref{Rbound} is analogous to those established for integrals with saddle points \cite[\S5]{Bennett2018}.
    \item The bound \eqref{Rbound} can be further simplified by using estimates for the level-one hyperterminant.
For example, if $\mu_j$ is real, then
\begin{gather}\label{F1bound}
\begin{split}
&\sup_{r > 0} r
\left|
 F^{(1)}\!\left(
r \e^{\im(\theta_{j,\ell} + \arg u)};
\mytop{N - \mu_j + 1}{1}
\right)
\right| \le \Gamma \left( N - \mu _j  + 1 \right) \\ & \quad \times
\begin{cases}
1, & \text{if }\; \left| \theta_{j,\ell} + \arg u \right| \le \frac{\pi}{2}, \\[6pt]
\min\left( \left| \csc\!\left( \theta_{j,\ell} + \arg u \right) \right|, \chi \left( N - \mu_j + 1 \right) + 1 \right),
& \text{if }\; \frac{\pi}{2} < \left| \theta_{j,\ell} + \arg u \right| \le \pi, \\[6pt]
\dfrac{\sqrt{ 2\pi \left( N - \mu_j + 1 \right) }}{ \left| \cos \left( \theta_{j,\ell} + \arg u \right) \right|^{N - \mu_j + 1} }
+ \chi \left( N - \mu_j + 1 \right) + 1,
& \text{if }\; \pi < \left| \theta_{j,\ell} + \arg u \right| < \frac{3\pi}{2},
\end{cases}
\end{split}
\end{gather}
where $\chi (p) = \frac{\sqrt \pi  \Gamma \left( \frac{p}{2} + 1 \right)}{\Gamma \left( \frac{p + 1}{2} \right)}$ (cf. \cite[Appendix~B]{Bennett2018}). For general complex values of $\mu_j$, the methods used to bound a related function in \cite[Appendix~A]{Nemes2017} could be used.
\item From formula \eqref{Rexplicit} below, we have
\begin{align*}
&\e^{\lambda _j u} u^{ - 1} a_{N,j} \left( \lambda _j u\e^{ - \pi \im} \right)^{\mu _j  - N} =\mathsf{R}_j (u,\eta ;N) - \mathsf{R}_j (u,\eta ;N + 1) \\ & \quad = -
 \e^{\lambda _j u} u^{ - 1}  \Gamma \left( N - \mu _j  + 1 \right) \sum_{\ell \ne j} 
K_{\ell,j} \left( \lambda _{\ell ,j} u\e^{ - \pi \im}  \right)^{\mu _j  - N}
\int_{0}^{+\infty}
\frac{
\Delta_{\lambda_\ell} y_\ell\big(\lambda_{\ell,j} t , \eta\big)
}{
\left(1 + t\right)^{N - \mu_j + 1}
} \id t.
\end{align*}
Thus, the right-hand side of the bound \eqref{Rbound} is seen to be closely related to the absolute value of the first omitted term in the asymptotic expansion \eqref{Ylarge}.
\item The assumption in Proposition \ref{prop4} that $\theta_{j,\ell_1} \neq \theta_{j,\ell_2}$ whenever $\ell_1 \neq \ell_2$ and $K_{\ell_1,j}, K_{\ell_2,j} \neq 0$ ensures that no singularities of $\Delta_{\lambda_j}y_j(t,\eta)$ lie on the integration path in \eqref{Rbound}. This restriction can be relaxed by deforming the contour to pass to the left or right of any such singularities.
\end{enumerate}
\end{remarks}

We now turn to the error bounds for the inverse factorial expansions \eqref{winvfact} of the solutions $w_j (z,\eta)$.

\begin{theorem}\label{thm2}
Let $\eta \in \mathbb{R}$ be an admissible direction such that $|\eta - \arg \lambda_j| < \pi$ and $-\frac{\pi}{2}+\eta^- < \arg \lambda_j < \frac{\pi}{2}+\eta^+$. Fix the phase of $\lambda_{\ell,j} = \lambda_\ell - \lambda_j$ by setting $\arg \lambda_{\ell,j}=\theta_{j,\ell} \in (\eta-2\pi,\eta)$ and assume that $\theta_{j,\ell_1} \neq \theta_{j,\ell_2}$ whenever $\ell_1 \neq \ell_2$ and $K_{\ell_1,j}, K_{\ell_2,j} \neq 0$. For each $1 \le j \le n$ and non-negative integer $N$, let $R_j (z,\eta ;N)$ denote the remainder obtained by truncating the inverse factorial expansion \eqref{winvfact} after $N$ terms:
\[
w_j (z,\eta)= \lambda_j^{-z} \sum_{s = 0}^{N-1}  a_{s,j} \Gamma \left(z + \mu _j  - s\right)+R_j (z,\eta ;N).
\]
Then
\begin{gather}\label{invfactRbound}
\begin{split}
\left| R_j (z,\eta ;N) \right| \le \left| \lambda _j^{ - z}  \right|\Gamma \left( \Re \left( z + \mu _j \right) - N \right)\sum_{\ell  \ne j} & \left| K_{\ell ,j}  \right|\left| \left( \frac{\lambda _{\ell ,j} }{\lambda _j }\e^{\pi \im} \right)^{\mu _j  - N} \right| 
\int_{0}^{+\infty}
\frac{
\left| \Delta_{\lambda_\ell} y_\ell\big(\lambda_{\ell,j} t, \eta\big) \right|
}{
\left(1 + t\right)^{N - \Re(\mu_j) + 1}
}
\id t \\ & \times
\sup_{r > 0} r
\left|
F^{(1)}\!\left(
r \e^{\im(\pi+\theta_{j,\ell}- \arg \lambda_j)};
\mytop{N - \mu_j + 1}{1}
\right)
\right|,
\end{split}
\end{gather}
provided $\Re(z)>N-\Re(\mu_j) > \max(0,a)$, and $\Re(\mu_\ell) < 1$ for all $\ell$ such that $K_{\ell,j}\neq 0$.
\end{theorem}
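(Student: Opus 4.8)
The plan is to reduce the estimate to Proposition~\ref{prop4} via the Laplace-type representation \eqref{wint} of $w_j(z,\eta)$, converting the resulting $u$-integral into a gamma integral by an elementary change of variables.

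First I would choose an auxiliary direction $\widetilde\eta\in\mathcal I_\eta$ with $|\widetilde\eta-\arg\lambda_j|<\tfrac{\pi}{2}$, which exists under the hypothesis $-\tfrac{\pi}{2}+\eta^-<\arg\lambda_j<\tfrac{\pi}{2}+\eta^+$, exactly as in the proof of Proposition~\ref{prop3}. By \eqref{wintegral}, $w_j(z,\eta)=\e^{-\pi\im z}\int_0^{[\pi-\arg\lambda_j]}u^z Y_j(u,\widetilde\eta)\id u$, valid here because $\Re(z)>N-\Re(\mu_j)>\max(0,a)$. Substituting the truncated expansion \eqref{Yexpansion} and using the substitution $u=\lambda_j^{-1}\e^{\pi\im}t$ with $t\in(0,+\infty)$, under which $\e^{-\pi\im z}u^z=\lambda_j^{-z}t^z$, $\lambda_j u\e^{-\pi\im}=t$ and $\e^{\lambda_j u}=\e^{-t}$, one evaluates the $N$ explicit terms as $\lambda_j^{-z}\sum_{s=0}^{N-1}a_{s,j}\Gamma(z+\mu_j-s)$ (the integrals converge since $\Re(z+\mu_j)>N$). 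This identifies $R_j(z,\eta;N)=\e^{-\pi\im z}\int_0^{[\pi-\arg\lambda_j]}u^z\mathsf R_j(u,\widetilde\eta;N)\id u$, where $\mathsf R_j(u,\widetilde\eta;N)$ is precisely the remainder of Proposition~\ref{prop4} taken with the direction $\widetilde\eta$.

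Next I would apply Proposition~\ref{prop4}, with $\widetilde\eta$ in place of $\eta$, to bound $|\mathsf R_j(u,\widetilde\eta;N)|$ for $\arg u=\pi-\arg\lambda_j$. Its hypotheses hold: $\widetilde\eta$ is admissible, the ray $\arg u=\pi-\arg\lambda_j$ lies in $\mathscr S(\widetilde\eta)$ precisely because $|\widetilde\eta-\arg\lambda_j|<\tfrac{\pi}{2}$, the inequalities $N-\Re(\mu_j)>\max(0,a)$ and $\Re(\mu_\ell)<1$ are assumed, the non-coincidence of the $\theta_{j,\ell}$ is inherited, and the fixed phase $\theta_{j,\ell}\in(\eta-2\pi,\eta)$ also satisfies $\theta_{j,\ell}\in(\widetilde\eta-2\pi,\widetilde\eta)$ for every $\widetilde\eta\in\mathcal I_\eta$ — here one uses that $\mathcal I_\eta$ is an interval of admissible directions of length less than $2\pi$ containing no cut direction, which forces $\theta_{j,\ell}\le\eta^-$ and $\theta_{j,\ell}+2\pi\ge\eta^+$, hence $\widetilde\eta-2\pi<\theta_{j,\ell}<\widetilde\eta$. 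One also notes that $\Delta_{\lambda_\ell}y_\ell$ evaluated on the ray $\arg w=\theta_{j,\ell}$ is determined by the convergent series \eqref{Deltay} and analytic continuation along that ray, hence is the same whether computed via $\eta$ or $\widetilde\eta$.

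Finally I would substitute this pointwise bound into $R_j(z,\eta;N)=\e^{-\pi\im z}\int_0^{[\pi-\arg\lambda_j]}u^z\mathsf R_j(u,\widetilde\eta;N)\id u$ and apply the same substitution $u=\lambda_j^{-1}\e^{\pi\im}t$. On this ray $\theta_{j,\ell}+\arg u=\pi+\theta_{j,\ell}-\arg\lambda_j$, so the supremum term $\sup_{r>0}r\,|F^{(1)}(r\e^{\im(\pi+\theta_{j,\ell}-\arg\lambda_j)};\mytop{N-\mu_j+1}{1})|$ and the integral $\int_0^{+\infty}(1+t)^{-(N-\Re(\mu_j)+1)}|\Delta_{\lambda_\ell}y_\ell(\lambda_{\ell,j}t,\eta)|\id t$ appearing in Proposition~\ref{prop4} are constants independent of $u$ and factor out, while the remaining $u$-integral becomes $\int_0^{+\infty}t^{\Re(z)+\Re(\mu_j)-N-1}\e^{-t}\id t=\Gamma(\Re(z+\mu_j)-N)$, convergent since $\Re(z+\mu_j)>N$. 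Collecting the elementary factors — $|\e^{-\pi\im z}u^z|\,|\d u|=t^{\Re(z)}|\lambda_j^{-z}|\,|\lambda_j|^{-1}\id t$, $|\e^{\lambda_j u}u^{-1}|=\e^{-t}|\lambda_j|/t$ and $|(\lambda_{\ell,j}u)^{\mu_j-N}|=|(\lambda_{\ell,j}\lambda_j^{-1}\e^{\pi\im})^{\mu_j-N}|\,t^{\Re(\mu_j)-N}$ — yields \eqref{invfactRbound} after cancellation of the powers of $|\lambda_j|$. The only mildly delicate part is the bookkeeping of the previous paragraph, namely checking that the hypotheses of Proposition~\ref{prop4} transfer to $\widetilde\eta$ and that the intended branch of $\Delta_{\lambda_\ell}y_\ell$ is used along the ray; everything else is a routine computation.
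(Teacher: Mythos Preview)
Your proposal is correct and follows essentially the same route as the paper: choose $\widetilde\eta\in\mathcal I_\eta$ with $|\widetilde\eta-\arg\lambda_j|<\tfrac{\pi}{2}$, write $R_j(z,\eta;N)$ as the Mellin-type integral of $\mathsf R_j(u,\widetilde\eta;N)$ via \eqref{wremainder}--\eqref{remainderformula}, apply the bound of Proposition~\ref{prop4}, and evaluate the resulting $u$-integral as a gamma function. The paper's version is terser---it does not spell out the change of variables or the check that $\theta_{j,\ell}\in(\widetilde\eta-2\pi,\widetilde\eta)$---but the argument is identical, including the final observation that $\Delta_{\lambda_\ell}y_\ell(\lambda_{\ell,j}t,\widetilde\eta)=\Delta_{\lambda_\ell}y_\ell(\lambda_{\ell,j}t,\eta)$ by analytic continuation.
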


The proof of Proposition \ref{prop4} is given first, followed by the proof of Theorem \ref{thm2}.

\begin{proof}[Proof of Proposition \ref{prop4}] Let $C$ be a closed contour encircling $t$ and $\lambda_j$ in the positive direction, with all other points $\lambda_\ell$ lying outside $C$. Then, by the complex Taylor theorem \cite[\S3.1]{Ahlfors1966}, we have
\[
y_j (t,\eta) = \sum_{s = 0}^{N-1}  a_{s,j} \Gamma \left(\mu _j-s\right)\left( 1 - \frac{t}{\lambda _j } \right)^{s - \mu _j }+\frac{1}{2\pi \im} \sum_{\ell \ne j}\oint_{C} 
\frac{1}{\tau - t} 
\left( \frac{t - \lambda_j}{\tau - \lambda_j} \right)^{N - \mu_j} 
y_j(\tau, \eta)\id\tau,
\]
for any non-negative integer $N$. We now introduce branch cuts in the $t$-plane, each starting at $\lambda_\ell$ ($\ell \ne j$) and extending to infinity along the ray $\arg\left(t-\lambda_\ell\right) =\theta_{j,\ell }$. Assuming that $N > \Re(\mu _j ) + a$, we can deform the contour $C$ so that it encircles each of these branch cuts, with the arcs at infinity making no contribution. This yields
\[
y_j (t,\eta) = \sum_{s = 0}^{N-1}  a_{s,j} \Gamma \left(\mu _j-s\right)\left( 1 - \frac{t}{\lambda _j } \right)^{s - \mu _j }-\frac{1}{2\pi \im} \sum_{\ell \ne j}\int_{\gamma_\ell(\theta_{j,\ell })} 
\frac{1}{\tau - t} 
\left( \frac{t - \lambda_j}{\tau - \lambda_j} \right)^{N - \mu_j} 
y_j(\tau, \eta)\id\tau,
\]
where the contour $\gamma_\ell(\theta_{j,\ell})$ is defined in the same manner as the contours appearing in \eqref{R0integral}. Substituting into \eqref{Yint} yields \eqref{Yexpansion} with
\begin{equation}\label{Rformula}
\mathsf{R}_j(u, \eta; N) = 
- \frac{1}{2\pi \im} \sum_{\ell \ne j} 
\int_{\gamma_j(\eta)} 
\frac{1}{2\pi \im} \int_{\gamma_\ell(\theta_{j,\ell })} 
\frac{\e^{u t}}{\tau - t} 
\left( \frac{t - \lambda_j}{\tau - \lambda_j} \right)^{N - \mu_j} 
y_j(\tau,\eta)\id\tau\id t,
\end{equation}
valid for any $u \in \mathscr{S}(\eta)$. Next, we perform that change of variable from $t$ and $\tau$ to $v$ and $\tau$ via
\[
t = \frac{v}{u}\left( \tau  - \lambda _j \right) + \lambda _j .
\]
Under this substitution, \eqref{Rformula} becomes
\[
\mathsf{R}_j(u, \eta; N) = 
- \e^{\lambda_j u} u^{\mu_j-N} \frac{1}{2\pi \im} 
\sum_{\ell \ne j} 
\int_{\gamma_0(\arg u  +\eta - \theta_{j,\ell } )} 
\frac{\e^{-\lambda_j v} v^{N - \mu_j}}{u- v}  \frac{1}{2\pi \im} 
\int_{\gamma_\ell(\theta_{j,\ell })} 
\e^{v\tau} y_j(\tau, \eta)\id\tau\id v,
\]
where $\gamma_0(\arg u + \eta - \theta_{j,\ell})$ starts at infinity along the left-hand side of the branch cut $\arg v = \arg u + \eta - \theta_{j,\ell}$, encircles the origin counter-clockwise, and returns to infinity along the right-hand side of the same cut. Along the left-hand side, $\arg v = \arg u + \eta - \theta_{j,\ell} - 2\pi$, while along the right-hand side $\arg v = \arg u + \eta - \theta_{j,\ell}$. If $N > \Re(\mu_j) - 1$, the contour may be collapsed onto the two sides of the branch cut, yielding
\[
\mathsf{R}_j(u, \eta; N)
= \e^{\lambda_j u} u^{\mu_j -N}
\sum_{\ell \ne j} 
K_{\ell, j} \int_0^{[\arg u  + \eta - \theta_{j,\ell } ]} 
\frac{\e^{-\lambda_j v} v^{N - \mu_j}}{u - v} \frac{1}{2\pi \im} 
\int_{\gamma_\ell(\theta_{j,\ell })} 
\e^{v\tau} y_\ell(\tau, \eta)\id\tau\id v,
\]
where \eqref{connection} has been used. Assuming $\Re(\mu_\ell) < 1$ for all $\ell$ with $K_{\ell,j} \neq 0$, we collapse the contours $\gamma_\ell(\theta_{j,\ell})$ onto the two sides of the corresponding branch cuts to obtain
\begin{align*}
\mathsf{R}_j(u, \eta; N)
&= \e^{\lambda_j u} u^{\mu_j-N}
\sum_{\ell \ne j} 
K_{\ell,j}
\int_{0}^{[\arg u + \eta - \theta_{j,\ell}]}
\frac{
\e^{\lambda_{\ell,j} v} v^{N - \mu_j}
}{u - v}
\int_{0}^{[\theta_{j,\ell}]}
\e^{v\tau} \Delta_{\lambda_\ell} y_\ell(\tau, \eta) \id \tau \id v\\
&=\e^{\lambda_j u} u^{\mu_j-N}  
\sum_{\ell \ne j} 
K_{\ell,j} \lambda_{\ell,j} 
\int_{0}^{[\arg u + \eta - \theta_{j,\ell}]}
\frac{
\e^{\lambda_{\ell,j} v} v^{N - \mu_j}
}{u - v}  \\ & \hspace{12em} \times \int_{0}^{+\infty}
\e^{\lambda_{\ell,j} v t} \Delta_{\lambda_\ell} y_\ell\big(\lambda_{\ell,j} t , \eta\big) \id t \id v.
\end{align*}
Finally, interchanging the order of integration and applying an appropriate change of variables together with the definition of the level-one hyperterminant, we obtain
\begin{gather}\label{Rexplicit}
\begin{split}
\mathsf{R}_j(u, \eta; N)
=\e^{\lambda_j u} u^{-1}
\sum_{\ell \ne j} &
K_{\ell,j}\left(\lambda_{\ell,j}u\right)^{\mu_j-N}
\int_{0}^{+\infty}
\frac{
\Delta_{\lambda_\ell} y_\ell\big(\lambda_{\ell,j} t , \eta\big)
}{
\left(1 + t\right)^{N - \mu_j + 1}
}\\ & \times
\lambda_{\ell,j} (1 + t) u
F^{(1)}\!\left(\lambda_{\ell,j} (1 + t) u;
\mytop{N - \mu_j + 1}{1}
\right) \id t,
\end{split}
\end{gather}
provided $N-\Re(\mu_j) > 0$. Applying a straightforward estimate to the right-hand side then produces the required bound \eqref{Rbound}.
\end{proof}

\begin{proof}[Proof of Theorem \ref{thm2}]
By choosing an angle $\widetilde{\eta} \in \mathcal{I}_\eta$ with $\left| \widetilde{\eta} - \arg \lambda_j \right| < \frac{\pi}{2}$, we can write
\[
R_j (z,\eta ;N) = \e^{ - \pi \im z} \int_0^{\left[ \pi  - \arg \lambda _j \right]} u^z \mathsf{R}_j \big(u,\widetilde \eta ;N\big)\id u 
\]
(cf.~\eqref{wremainder} and \eqref{remainderformula}). Applying the triangle inequality together with the bound \eqref{Rbound} (with $\widetilde{\eta}$ substituted for $\eta$) yields \eqref{invfactRbound} with $\Delta_{\lambda_\ell} y_\ell\big(\lambda_{\ell,j} t, \widetilde{\eta}\big)$ replacing $\Delta_{\lambda_\ell} y_\ell\big(\lambda_{\ell,j} t, \eta\big)$. However, since $\widetilde{\eta} \in \mathcal{I}_\eta$, these two functions coincide by the principle of analytic continuation.
\end{proof}

\section{Applications}\label{sec6}
In this section, we present two examples that demonstrate the applicability of our results. In the first, we derive a new expansion for the Gauss hypergeometric function with a large third parameter, accompanied by explicit error bounds. The second concerns a third-order difference equation, for which we obtain hyperasymptotic expansions and show how the connection coefficients in these expansions can be computed numerically from the behaviour of the late coefficients.

\subsection{Gauss hypergeometric function with a large third parameter} From the contiguous relation for the Gauss hypergeometric function \cite[Eq.~\href{http://dlmf.nist.gov/15.5.E18}{(15.5.18)}]{DLMF}, it follows that
\[
w(\lambda) = \frac{\Gamma (c- a+\lambda)\Gamma (c- b+\lambda )}{\Gamma (c+\lambda)}{}_2F_1 \!\left(\mytop{a,b}{c+\lambda};z\right)
\]
satisfies the second-order difference equation
\begin{equation} \label{example1}
w(\lambda + 2) + f_1 (\lambda)w(\lambda + 1) + f_0 (\lambda)w(\lambda) = 0,
\end{equation}
where
\begin{align*}
f_0 (\lambda) & = \frac{z - 1}{z}\lambda (\lambda  - 1) + \frac{z - 1}{z}(2c - a - b + 1)\lambda  + \frac{z - 1}{z}(c - a)(c - b),\\
f_1 (\lambda) & = \frac{1 - 2z}{z}\lambda+\frac{1 - 2z}{z}c   + a + b - 1.
\end{align*}
The characteristic roots $\lambda_j$ and the corresponding constants $\mu_j$ are
\[
\lambda_1=1,\quad \lambda _2  = \frac{z}{z - 1}, \quad \mu _1  =  c- a-b, \quad \mu _2  =  c- 1.
\]
Throughout, we assume that neither $c$ nor $c - a - b$ is an integer. We also temporarily restrict $z$ so that $|\arg\left(1-z\right)|<\pi$ and $\Im(z)\ne 0$. With the initial values $a_{0,j} = 1$ for $j = 1,2$, the recurrence \eqref{arec} can be solved in closed form, yielding
\[
a_{s,1} = \frac{(a)_s (b)_s }{s!}(z - 1)^s ,\quad a_{s,2} = (-1)^s\frac{(1 - a)_s (1 - b)_s }{s!}z^s,
\]
valid for $s \ge 0$.

We recall from Proposition \ref{prop3} that the exact solutions constructed in this paper depend not only on $z$ but also on a real parameter $\eta$, assumed to be admissible in the sense of Section \ref{sec2}. Under the present constraints on $z$, the choice $\eta = 0$ is admissible, and we adopt it here. By Proposition \ref{prop3}, the difference equation \eqref{example1} has, for $\Re(\lambda) > \max(\Re(a-c), \Re(b-c))$, two solutions $w_1(\lambda,0)$ and $w_2(\lambda,0)$ whose asymptotic behaviour, as $\Re(\lambda) \to +\infty$ with $\Im(\lambda) = \mathcal{O}\big(\sqrt{\Re(\lambda)}\big)$, is given by
\[
w_1 (\lambda,0) \sim \sum_{s = 0}^\infty  \frac{(a)_s (b)_s }{s!}(z - 1)^s \Gamma (\lambda  + c - a - b - s),
\]
and
\[
w_2 (\lambda,0) \sim \left( \frac{z - 1}{z} \right)^\lambda  \sum_{s = 0}^\infty(-1)^s  \frac{(1 - a)_s (1 - b)_s }{s!}z^s \Gamma (\lambda  + c - 1 - s).
\]
Now there exist periodic functions $C_1(\lambda)$ and $C_2(\lambda)$ of $\lambda$ with period $1$ such that
\begin{equation}\label{Cexpression}
\frac{\Gamma (c- a+\lambda)\Gamma (c- b+\lambda )}{\Gamma (c+\lambda)}{}_2F_1 \!\left(\mytop{a,b}{c+\lambda};z\right) =C_1(\lambda)w_1(\lambda,0)+C_2(\lambda)w_2(\lambda,0).
\end{equation}
To determine these functions explicitly, we first observe that the known large-$\lambda$ asymptotic power series \cite[Eq.~\href{http://dlmf.nist.gov/15.12.E3}{(15.12.3)}]{DLMF} for the Gauss hypergeometric function yields
\[
\frac{\Gamma (c- a+\lambda)\Gamma (c- b+\lambda )}{\Gamma (c+\lambda)}{}_2F_1 \!\left(\mytop{a,b}{c+\lambda};z\right) \sim \frac{\Gamma (c - a + \lambda )}{\lambda ^b } \sim \Gamma (\lambda  + c - a - b),
\]
as $\lambda\to+\infty$. Hence, dividing both sides of \eqref{Cexpression} by $\Gamma(\lambda + c - a - b)$ and letting $\lambda \to +\infty$, we find
\begin{align*}
1 & = \lim_{\lambda \to +\infty} C_1(\lambda)
  + \lim_{\lambda \to +\infty} C_2(\lambda)
    \lim_{\lambda \to +\infty} 
      \left( \frac{z - 1}{z} \right)^{\lambda} 
      \frac{\Gamma(\lambda + c - 1)}{\Gamma(\lambda + c - a - b)}
\\ & = \lim_{\lambda  \to  + \infty } C_1 (\lambda ) + \lim_{\lambda  \to  + \infty } C_2 (\lambda )\lim_{\lambda  \to  + \infty } \left( \frac{z - 1}{z}\right)^\lambda  \lambda ^{a + b - 1} .
\end{align*}
If $\Re(z) < \frac{1}{2}$, then $\left| \frac{z - 1}{z} \right| > 1$, and since both $C_1(\lambda)$ and $C_2(\lambda)$ are periodic, it follows that $C_1(\lambda) = 1$ and $C_2(\lambda) = 0$. Therefore,
\begin{equation}\label{Fexpression}
\frac{\Gamma (c- a+\lambda)\Gamma (c- b+\lambda )}{\Gamma (c+\lambda)}{}_2F_1 \!\left(\mytop{a,b}{c+\lambda};z\right)=w_1(\lambda,0),
\end{equation}
provided that $\Re(\lambda) > \max(\Re(a-c), \Re(b-c))$, $\Re(z) < \frac{1}{2}$, and $\Im(z)\ne 0$.

We can extend the $z$-domain of validity of \eqref{Fexpression} and the $z$-domain of definition of $w_1(\lambda,0)$ to $|\arg\left(1 - z\right)| < \pi$ by computing the corresponding Borel transform $y_1(t,0)$. By definition,
\begin{align*}
y_1(t,0) & = \sum_{s=0}^\infty \frac{(a)_s (b)_s}{s!} (z - 1)^s \Gamma(c - a - b - s) (1 - t)^{s + a + b - c} \\ &
= \Gamma(c - a - b) (1 - t)^{a + b - c}  {}_2F_1\!\left(
\mytop{a, b}{a + b - c + 1}; (1 - z)(1 - t)
\right),
\end{align*}
valid when $|(1 - z)(1 - t)|<1$. By analytic continuation, this condition can be relaxed to $|\arg\left(1 - z\right)| < \pi$ and $|\arg\left(1 - t\right)| < \pi$. Therefore, $w_1(\lambda, 0)$ is analytic in $z$ for $|\arg\left(1 - z\right)| < \pi$, and by analytic continuation, the equality \eqref{Fexpression} holds throughout this domain.

Thus, we have established the inverse factorial expansion
\begin{equation}\label{hypergeoexp}
\frac{\Gamma (c- a+\lambda)\Gamma (c- b+\lambda )}{\Gamma (c+\lambda)}{}_2F_1 \!\left(\mytop{a,b}{c+\lambda};z\right)\sim \sum_{s = 0}^\infty  \frac{(a)_s (b)_s }{s!}(z - 1)^s \Gamma (\lambda  + c - a - b - s),
\end{equation}
as $\Re(\lambda) \to +\infty$ with $\Im(\lambda) = \mathcal{O}\big(\sqrt{\Re(\lambda)}\big)$,
under the conditions that $|\arg\left(1-z\right)| < \pi$ and that neither $c$ nor $c-a-b$ is an integer. To the best of our knowledge, the inverse-factorial expansion \eqref{hypergeoexp} appears to be new; it differs from the standard large-parameter expansions for ${}_2F_1$ available in the literature (cf.~\cite[Eqs.~\href{http://dlmf.nist.gov/15.12.E2}{(15.12.2)} and \href{http://dlmf.nist.gov/15.12.E3}{(15.12.3)}]{DLMF} and \cite[Eq. (C.4)]{Bissi2025}).

We conclude this section by deriving error bounds for the expansion \eqref{hypergeoexp}. Throughout, we continue to assume that neither $c$ nor $c-a-b$ is an integer. We begin by determining the connection coefficient $K_{2,1}$. Applying \eqref{ahyper} with $\ell = 1$ gives
\[
\frac{(a)_N (b)_N }{N!}(z-1)^N  \sim K_{2,1} z^{c - 1} (z - 1)^{N+a + b - c} \sum_{s = 0}^\infty  (-1)^s\frac{(1 - a)_s (1 - b)_s }{s!} \Gamma (N + a + b - 1 - s)
\]
as $N\to+\infty$. Comparing this with the known inverse factorial expansion for a ratio of gamma functions \cite[Eq.~\href{http://dlmf.nist.gov/5.11.E19}{(5.11.19)}]{DLMF}, we find
\[
K_{2,1}  = \frac{z^{1 - c} (z - 1)^{c - a - b} }{\Gamma (a)\Gamma (b)}.
\]
Furthermore, from \eqref{Deltay} and analytic continuation, we have
\[
\Delta _{\lambda _2 } y_2 \big( \lambda _{2,1} t,0 \big) = - \frac{1}{\Gamma (2 - c)}\left( \frac{t}{z}\right)^{1 - c} {}_2F_1\! \left(   \mytop{1 - a,1 - b}{2 - c} ; - t \right),
\]
valid for any $t > 0$. Thus, writing
\[
\frac{\Gamma (c- a+\lambda)\Gamma (c- b+\lambda )}{\Gamma (c+\lambda)}{}_2F_1 \!\left(\mytop{a,b}{c+\lambda};z\right)= \sum_{s = 0}^{N-1} \frac{(a)_s (b)_s }{s!}(z - 1)^s \Gamma (\lambda  + c - a - b - s)+R_1 (\lambda,0 ;N),
\]
Theorem~\ref{thm2} gives the bound
\begin{gather}\label{2F1bound}
\begin{split}
\left|R_1 (\lambda,0 ;N)\right| & \le \frac{|z - 1|^{N} \left| \e^{\pi \im (c - a - b)} \right|}{| \Gamma(a) \Gamma(b) \Gamma(2 - c) |} \Gamma( \Re (\lambda + c - a - b) - N  ) \\ & \quad \times \int_{0}^{+\infty} \frac{t^{1 - \Re(c)}}{\left(1 + t\right)^{N + \Re(a + b - c) + 1}} \left| {}_2 F_1\!\left( \mytop{1-a,1-b}{2-c} ; -t \right) \right| \id t \\ & \quad\times \sup_{r > 0} r \left| F^{(1)}\!\left( r \e^{-\im \arg(1 - z)} ; \mytop{N + a + b - c + 1}{1} \right) \right|,
\end{split}
\end{gather}
valid whenever $\Re(\lambda + c - a - b)>N > \max(-\Re(a), -\Re(b))$, $\Re(c) < 2$, $|\arg\left(1-z\right)| < \pi$ and $\Im(z)\ne 0$. The requirement that $\Im(z) \ne 0$ can be relaxed using a continuity argument. We now show that a more explicit estimate for $R_1 (\lambda,0 ;N)$ can be obtained if the condition $\Re(c) < 2$ is replaced by the stronger requirement $\Re(c) < \Re(b) + 1 < 2$.
By \cite[Eq.~\href{http://dlmf.nist.gov/15.6.E1}{(15.6.1)}]{DLMF},
\begin{align*}
& \left| {}_2F_1\!\left(
\mytop{1 - a, 1 - b}{2-c}; -t
\right) \right|
 =
\left| \frac{\Gamma(2 - c)}{\Gamma(1 - b)\Gamma(b - c + 1)}
\int_0^1 \frac{\tau^{-b} (1 - \tau)^{b - c}}{\left(1 + t \tau\right)^{1 - a}} \id\tau \right|
\\ & \quad \le
\frac{|\Gamma(2 - c)| \Gamma(1 - \Re(b)) \Gamma\left(\Re (b - c ) + 1\right)}{\Gamma(2 - \Re(c)) |\Gamma(1 - b) \Gamma(b - c + 1)|}
{}_2F_1\!\left(
\mytop{1 - \Re(a), 1 - \Re(b)}{2-\Re(c)}; -t
\right),
\end{align*}
valid when $\Re(c) < \Re(b) + 1 < 2$.
Furthermore, by \cite[Eq.~(21.1.1.16)]{Prudnikov1990},
\begin{multline*}
 \int_0^{+\infty} \frac{t^{1 - \Re(c)}}{\left(1 + t\right)^{N + \Re(a + b - c) + 1}} {}_2F_1\!\left(\mytop{1 - \Re(a), 1 - \Re(b)}{2 - \Re(c)}; -t \right) \id t
\\ = \frac{\Gamma(2 - \Re(c)) \Gamma(N + \Re(a)) \Gamma(N + \Re(b))}{N! \Gamma(N + \Re(a + b - c) + 1)},
\end{multline*}
provided $N > \max(-\Re(a), -\Re(b))$ and $\Re(c) < 2$. Combining these results with \eqref{2F1bound} gives the following estimate for $R_1 (\lambda,0 ;N)$:
\begin{align*}
\left|R_1 (\lambda,0 ;N)\right| & \le
\left|\e^{\pi \im(c - a - b)}\right| \frac{|\sin(\pi b)|}{|\sin(\pi \Re(b))|} \frac{\Gamma(\Re(a)) \Gamma(\Re(b - c) + 1)}{|\Gamma(a) \Gamma(b - c + 1)|} \\ & \quad\times\frac{(\Re(a))_N (\Re(b))_N}{N!} |z - 1|^N \Gamma(\Re(\lambda + c - a - b) - N) \\ & \quad \times\frac{1}{\Gamma(N + \Re(a + b - c) + 1)} \sup_{r > 0} r \left| F^{(1)}\left(r \e^{-\im \arg(1 - z)} ; \mytop{N + a + b - c + 1}{1} \right) \right|,
\end{align*}
valid when $\Re(\lambda + c - a - b)>N > \max(-\Re(a), -\Re(b))$ and $\Re(c) < \Re(b) + 1 < 2$. If $a$, $b$, and $c$ are all real, this bound can be further simplified using \eqref{F1bound}:
\begin{align*}
\left|R_1 (\lambda,0 ;N)\right| & \le \frac{(a)_N (b)_N}{N!} |z - 1|^N \Gamma(\Re(\lambda) + c - a - b - N) 
\\ & \quad\times 
\begin{cases}
1, & \text{if }\; |\arg\left(1 - z\right)| \le \tfrac{\pi}{2}, \\[6pt]
\min\left(|\csc(\arg\left(1 - z\right))|, \chi(N + a + b - c + 1) + 1\right), & \text{if }\; \tfrac{\pi}{2} < |\arg\left(1 - z\right)| < \pi,
\end{cases}
\end{align*}
provided that $\Re(\lambda) + c - a - b>N>\max(-a,-b)$ and $c<b+1<2$.

\begin{remarks}
\noindent
\begin{enumerate}[label=(\roman*), leftmargin=*, labelindent=0pt,itemsep=0.3em]
\item The second solution, $w_2(\lambda,0)$, can likewise be expressed in terms of the Gauss hypergeometric function. In particular,
\[
w_2(\lambda, 0) =
\left( \frac{z-1}{z} \right)^\lambda
\frac{\Gamma(c-a+\lambda) \Gamma(c-b+\lambda)}{\Gamma(c-a-b+1+\lambda)}
{}_2F_1\!\left( \mytop{1-a, 1-b}{c-a-b+1+\lambda}; 1-z \right),
\]
provided that neither $c$ nor $c-a-b$ is an integer, $\Re(\lambda) > \max(\Re(a-c),\Re(b-c))$, $|\arg\left(1-z\right)| < \pi$, and $|\arg z| < \pi$.
\item It is possible to derive hyperasymptotic refinements of \eqref{hypergeoexp} using Theorem \ref{thm1}. However, even the level-one re-expansion involves hypergeometric functions with a large parameter. As a result, the approximants are essentially as complicated as the original function we aim to approximate. Therefore, the hyperasymptotic expansions in this setting offer little practical advantage.
\item The problem of determining the large-$\nu$ asymptotics of the associated Legendre functions $P_\nu^\mu(z)$ and $Q_\nu^\mu(z)$ may be reformulated in terms of the Gauss hypergeometric function with a large third parameter. Applying the results of this subsection then recovers some of the results on the associated Legendre functions given in \cite{Nemes2020}.
\end{enumerate}
\end{remarks}

\subsection{A third-order difference equation} As a second example, consider the third-order difference equation
\begin{equation}\label{example2}
w(z + 3) + f_2 (z)w(z + 2) + f_1 (z)w(z + 1) + f_0 (z)w(z) = 0,
\end{equation}
where
\begin{align*}
f_0 (z) & =  - \tfrac{1}{2}(z - 2)(z - 1)z - \tfrac{15}{4}(z - 1)z,
\\ f_1 (z) & = (z - 1)z + 5z,\quad f_2 (z) =  - \tfrac{1}{2}z - \tfrac{5}{4}.
\end{align*}
The characteristic roots $\lambda_j$ and the associated constants $\mu_j$ are given by
\[
\lambda _1  = 2,\quad \lambda _2  = \im,\quad \lambda _3  =  - \im,\quad \mu _1  = \mu _2  = \mu _3  = \tfrac{1}{2}.
\]
The coefficients $a_{s,j}$ can be computed recursively from \eqref{arec}, starting with the initial values $a_{0,j} = 1$. In particular, $a_{s,1}$ are real, and the remaining two sets of coefficients satisfy the conjugacy relation $a_{s,3} = \overline{a_{s,2}}$.

We recall that the exact solutions constructed in Proposition~\ref{prop3} are functions depending on both $z$ and an admissible direction parameter $\eta$ (see Section \ref{sec2}). We choose $\eta = 0$ as the admissible direction. Our goal is to compute $w_{1}(z,0)$ at $z = 30 + \im$ using level-zero, level-one, and level-two hyperasymptotic approximations. To this end, we first determine the connection coefficients $K_{\ell,j}$ defined in \eqref{connection}.

To compute $K_{2,1}$ and $K_{3,1}$, we apply \eqref{ahyper} with $j=1$ and $\ell=1$. Specifically, if
\[
N_{2,3}^{(1)}  \approx \bigg( 1 - \frac{\alpha _1^{(0)} }{\alpha _1^{(1)} } \bigg)N_1^{(0)}  = \frac{2}{\sqrt 5  + 2}N_1^{(0)} ,
\]
then we have
\begin{gather}\label{latecoeff1}
\begin{split}
a_{N_1^{(0)} ,1} & = \frac{1 + \im}{2}K_{2,1} \left( \frac{2}{\im - 2} \right)^{N_1^{(0)}} \sum_{s = 0}^{N_2^{(1)}  - 1} a_{s,2} \left( \frac{\im}{\im - 2} \right)^{ - s} \Gamma \big( N_1^{(0)}  - s \big) \\ &\qquad + \frac{1 - \im}{2}K_{3,1} \left( \frac{2}{ - \im - 2} \right)^{N_1^{(0)} } \sum_{s = 0}^{N_3^{(1)}  - 1} a_{s,3} \left( \frac{\im}{\im + 2} \right)^{ - s} \Gamma \big( N_1^{(0)}  - s \big)
\\ &\qquad + \left( \frac{2}{\sqrt 5  + 2} \right)^{N_1^{(0)} } \Gamma \big( N_1^{(0)}  + \tfrac{3}{2} \big)\mathcal{O}(1),
\end{split}
\end{gather}
as $N_1^{(0)}\to+\infty$. From the recurrence, we calculate
\begin{gather}\label{coeffnum}
\begin{split}
a_{100,1}&=\phantom{-}1.1142187816307847845\ldots\times 10^{151},\\
a_{101,1}&=-1.0529263779207865718\ldots\times 10^{153}.
\end{split}
\end{gather}
Next, neglecting the remainder term in \eqref{latecoeff1}, we evaluate the right-hand side for $N_1^{(0)}=100,101$, $N_2^{(1)}=N_3^{(1)}=47$, and equate the results to the corresponding values in \eqref{coeffnum}. This leads to a linear system of two equations. Solving this system yields
\begin{align*}
K_{2,1}  &=  - 0.54527032667963220005 + 0.23807964635130112660\,\im,
\\ K_{3,1}  &=  - 0.54527032667963220005 - 0.23807964635130112660\,\im.
\end{align*}
The choice of truncation indices ensures that all displayed digits are accurate. That these values must be complex conjugates follows directly from the fact that the $a_{s,1}$ are real.

To compute $K_{1,2}$ and $K_{3,2}$, we apply \eqref{ahyper} with $j=2$ and $\ell=1$. In particular, if
\[
N_{1,3}^{(1)}  \approx \bigg(  1 - \frac{\alpha _2^{(0)} }{\alpha _2^{(1)} } \bigg)N_2^{(0)}  = \tfrac{1}{2}N_1^{(0)} ,
\]
then
\begin{gather}\label{latecoeff2}
\begin{split}
a_{N_2^{(0)},2}& =  (1 - \im) K_{1,2} \left(\frac{\im}{2 - \im}\right)^{N_2^{(0)}}
   \sum_{s = 0}^{N_1^{(1)} - 1}
      a_{s,1} \left(\frac{2}{2 - \im}\right)^{-s}
      \Gamma\big(N_2^{(0)} - s\big) \\
& \quad- \im K_{3,2} \left(\frac{1}{2}\right)^{N_2^{(0)}}
   \sum_{s = 0}^{N_3^{(1)} - 1}
      a_{s,3}\left(\frac{1}{2}\right)^{-s}
      \Gamma \big(N_2^{(0)} - s\big)
 + \left( \frac{1}{4} \right)^{N_2^{(0)} } \Gamma \big( N_2^{(0)}  + \tfrac{3}{2}\big)\mathcal{O}(1)
\\ & = - \im K_{3,2} \left(\frac{1}{2}\right)^{N_2^{(0)}}
   \sum_{s = 0}^{N_3^{(1)} - 1}
      a_{s,3}\left(\frac{1}{2}\right)^{-s}
      \Gamma \big(N_2^{(0)} - s\big)
 + \left( \frac{1}{\sqrt{5}} \right)^{N_2^{(0)} } \Gamma \big( N_2^{(0)}\big)\mathcal{O}(1),
\end{split}
\end{gather}
as $N_2^{(0)}\to+\infty$. We first determine $ K_{3,2}$. Using the recurrence relation, we compute $a_{450,2}$ to $40$ significant figures:
\begin{gather}\label{coeffnum2}
\begin{split}
a_{450,2}=&-2.879560037976494995808298143930528338498\ldots\times10^{861} \\ &- 
3.148525898690808216545318876400345595695\ldots\times10^{861} \,\im.
\end{split}
\end{gather}
Next, neglecting the remainder, we use the second form of \eqref{latecoeff2} with $N_2^{(0)}=450$ and $N_3^{(1)}=50$, and equate it to the precomputed value \eqref{coeffnum2} of $a_{450,2}$. Solving this equation gives $K_{3,2}$ to sufficient precision to compute $K_{1,2}$. To compute $K_{1,2}$, we first calculate
\begin{equation}\label{coeffnum3}
a_{100,2}= -1.6249326747146250691 \ldots\times10^{125} - 1.8597792050637335402\ldots\times 10^{125} \,\im
\end{equation}
using the recurrence relation. Then, neglecting the remainder, we use the first form of \eqref{latecoeff2} with $N_2^{(0)}=100$, $N_1^{(1)}= N_3^{(1)}=50$, and the previously computed $K_{3,2}$, and equate the result to \eqref{coeffnum3}. Solving this yields
\[
K_{1,2} = -0.15606547412085437334 - 0.06814237098247924424 \,\im.
\]
The truncation indices are chosen so that all displayed digits are accurate. The value of $K_{3,2}$, truncated here to match the precision of $K_{1,2}$, is
\[
K_{3,2}  = 0.23345811424083407518 - 0.21637581319882182578\,\im.
\]
In an analogous manner, by considering the asymptotics of $a_{N_3^{(0)},3}$, we obtain
\begin{align*}
K_{1,3} = -0.15606547412085437334 + 0.06814237098247924424 \,\im,
\\ K_{2,3}  =\phantom{-} 0.23345811424083407518 + 0.21637581319882182578\,\im.
\end{align*}

We now have all the ingredients in place to write down the hyperasymptotic approximations. At level zero,
\[
\beta_1^{(0)}=\frac{2}{2 + \sqrt 5 },
\]
and the corresponding superasymptotic approximation is
\[
 w_{1}(z,0)=2^{-z}\sum_{s = 0}^{N_{1}^{(0)} - 1} a_{s,1} \Gamma\big(z + \tfrac{1}{2} - s\big)+R_1^{(0)}(z,0),
\]
with optimal truncation index $N_{1}^{(0)}=14$.

Proceeding to level one,
\[
\beta_1^{(0)}=\frac{2 + \sqrt 5 }{4 + \sqrt 5 },\quad
\beta_2^{(1)}=\beta_3^{(1)}=\frac{2}{4 + \sqrt 5 },
\]
and the corresponding hyperasymptotic expansion is
\begin{align*}
 w_{1}(z,0) & =2^{-z}\Bigg(
\sum_{s = 0}^{N_{1}^{(0)} - 1} a_{s,1} \Gamma\big(z + \tfrac{1}{2} - s\big)\Bigg.
 \\ &\quad + \Gamma\big(z + \tfrac{1}{2} - N_{1}^{(0)} + 1\big) K_{2,1}
\sum_{s = 0}^{N_{2}^{(1)}-1} a_{s,2}
H^{(2)}\!\left(
z;\mytop{\tfrac{1}{2} - N_{1}^{(0)},}{2}
\mytop{ N_{1}^{(0)} - s}{\im-2}
\right)\\ &\quad
+ \Gamma\big(z + \tfrac{1}{2} - N_{1}^{(0)} + 1\big) K_{3,1}
\sum_{s = 0}^{N_{3}^{(1)}-1} a_{s,3}
H^{(2)}\!\left(
z;\mytop{\tfrac{1}{2} - N_{1}^{(0)},}{2}
\mytop{ N_{1}^{(0)} - s}{-\im-2}
\right)\Bigg.\Bigg)+ R_1^{(1)}(z,0),
\end{align*}
with optimal truncation indices $N_{1}^{(0)}=20$ and $N_{2}^{(1)}=N_{3}^{(1)}=10$.

Finally, at level two, we obtain
\[
\beta_1^{(0)} = \frac{4+\sqrt{5}}{6+\sqrt{5}}, \quad 
\beta_2^{(1)} = \beta_3^{(1)} = \frac{4}{6+\sqrt{5}}, \quad \beta_1^{(2)} =  \beta_2^{(2)} = \beta_3^{(2)} = \frac{2}{6+\sqrt{5}},
\]
and the corresponding hyperasymptotic expansion is
{\allowdisplaybreaks\begin{align*}
& w_{1}(z,0)  =2^{-z}\Bigg(
\sum_{s = 0}^{N_{1}^{(0)} - 1} a_{s,1} \Gamma\big(z + \tfrac{1}{2} - s\big)\Bigg.
 \\ &\quad + \Gamma\big(z + \tfrac{1}{2} - N_{1}^{(0)} + 1\big) K_{2,1}
\sum_{s = 0}^{N_{2}^{(1)}-1} a_{s,2}
H^{(2)}\!\left(
z;\mytop{\tfrac{1}{2} - N_{1}^{(0)},}{2}
\mytop{ N_{1}^{(0)} - s}{\im-2}
\right)\\ &\quad
+ \Gamma\big(z + \tfrac{1}{2} - N_{1}^{(0)} + 1\big) K_{3,1}
\sum_{s = 0}^{N_{3}^{(1)}-1} a_{s,3}
H^{(2)}\!\left(
z;\mytop{\tfrac{1}{2} - N_{1}^{(0)},}{2}
\mytop{ N_{1}^{(0)} - s}{-\im-2}
\right)\\
 &\quad+\Gamma\big(z + \tfrac{1}{2} - N_1^{(0)} + 1 \big) K_{2,1} K_{1,2} 
   \sum_{s=0}^{N_1^{(2)} - 1} 
   a_{s,1} H^{(3)}\!\left(z;
   \mytop{\tfrac{1}{2} - N_1^{(0)},}{2}
   \mytop{N_1^{(0)} - N_2^{(1)} + 1,}{\im-2}
   \mytop{N_2^{(1)} - s}{2-\im}\right) \\
&\quad+ \Gamma\big(z + \tfrac{1}{2} - N_1^{(0)} + 1 \big) K_{2,1} K_{3,2} 
   \sum_{s=0}^{N_3^{(2)} - 1} 
   a_{s,3} H^{(3)}\!\left(z;
   \mytop{\tfrac{1}{2} - N_1^{(0)},}{2}
   \mytop{N_1^{(0)} - N_2^{(1)} + 1,}{\im-2}
   \mytop{N_2^{(1)} - s}{-2\im}\right) \\
&\quad+ \Gamma\big(z + \tfrac{1}{2} - N_1^{(0)} + 1 \big) K_{3,1} K_{1,3} 
   \sum_{s=0}^{N_1^{(2)} - 1} 
   a_{s,1} H^{(3)}\!\left(z;
   \mytop{\tfrac{1}{2} - N_1^{(0)},}{2}
   \mytop{N_1^{(0)} - N_3^{(1)} + 1,}{-\im-2}
   \mytop{N_3^{(1)} - s}{2+\im}\right) \\
&\quad+ \Gamma\big(z + \tfrac{1}{2} - N_1^{(0)} + 1 \big) K_{3,1} K_{2,3} 
   \sum_{s=0}^{N_2^{(2)} - 1} 
   a_{s,2} H^{(3)}\!\left(z;
   \mytop{\tfrac{1}{2} - N_1^{(0)},}{2}
   \mytop{N_1^{(0)} - N_3^{(1)} + 1,}{-\im-2}
   \mytop{N_3^{(1)} - s}{2\im}\right)\Bigg.\Bigg)\\
  &\quad + R_2^{(1)}(z,0),
\end{align*}}
with optimal truncation indices $N_1^{(0)}  = 23$, $N_2^{(1)}  = N_3^{(1)}  = 15$, and $N_1^{(2)} = N_2^{(2)}  = N_3^{(2)}  = 7$.

To compute the ``exact" value of $w_1(z,0)$ at $z = 30 + \mathrm{i}$, we first compute $w_1(z,0)$ at $z = 100 + \mathrm{i}$, $101 + \mathrm{i}$, and $102 + \mathrm{i}$ using 47 terms of its inverse factorial expansion, and then apply \eqref{example2} backwards to reach $z = 30 + \mathrm{i}$. The numerical results are presented in Table \ref{table1}. Figure \ref{figure1} shows the magnitudes of the terms in the level-two hyperasymptotic expansion, normalised by $2^{-z}\Gamma\left(z+\frac{1}{2}\right)$.

\begin{table}[htbp]
\centering
\caption{Hyperasymptotic approximations of $w_1(z,0)$ at $z = 30 + \im$}
\label{table1}
\begin{tabularx}{\textwidth}{@{\hspace{-1.5em}} 
>{\centering\arraybackslash}X @{\hspace{-1.5em}}
>{\centering\arraybackslash}p{6cm}@{\hspace{0.5em}}
>{\centering\arraybackslash}X @{\hspace{0.5em}}
}
\midrule
\addlinespace[1.2ex]
level &  approximation & $|$relative error$|$ \\
\addlinespace[1ex]
\midrule
\addlinespace[0.5em]
0 & \parbox[c]{6cm}{
$-4.8415547386561052316679\times 10^{22}$\\
$\phantom{00} +2.2760201589599892230494\times 10^{22} \im$} & $6.8 \times 10^{-11}$ \\[3ex]
1 & \parbox[c]{6cm}{
$-4.8415547384705057894803\times 10^{22}$\\
$\phantom{00} +2.2760201586435914394371\times 10^{22} \im$} & $8.5 \times 10^{-17}$ \\[3ex]
2 & \parbox[c]{6cm}{
$-4.8415547384705057443385\times 10^{22}$\\
$\phantom{00} +2.2760201586435909838540\times 10^{22} \im$} & $8.6 \times 10^{-22}$ \\[3ex]
exact & \parbox[c]{6cm}{
$-4.8415547384705057443395\times 10^{22}$\\
$\phantom{00}+2.2760201586435909838494\times 10^{22} \im$} & 0 \\[2ex]
\midrule
\end{tabularx}
\end{table}

\begin{figure}[htbp]
    \centering
    \includegraphics[width=0.8\textwidth]{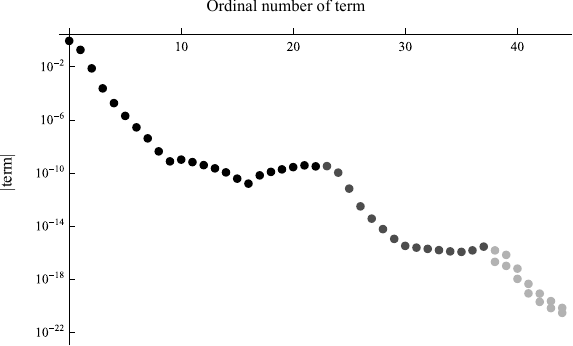}
    \caption{Magnitudes of the level-two hyperasymptotic terms of $w_1(z,0)$ at $z = 30 + \im$, normalised by scaling with $2^{-z}\Gamma \big(z+\tfrac{1}{2}\big)$.}
    \label{figure1}
\end{figure}

\section{Discussion}\label{sec7}

We have extended the hyperasymptotic method to inverse factorial series solutions of higher-order linear difference equations with polynomial coefficients and an irregular singularity of rank one at infinity. The resulting expansions were shown to determine the solutions uniquely, and we have also indicated how the associated connection coefficients can be computed numerically using hyperasymptotic techniques. In addition, we have derived explicit remainder bounds for the inverse factorial series solutions. 

The analysis relied on Mellin--Borel transforms, whose Laplace-type transforms establish a direct link with the hyperasymptotic theory of Laplace-type integrals developed in \cite{OldeDaalhuis1998b}.

The assumption that the coefficients $f_k(z)$ of the difference equation are polynomials ensures that the Mellin--Borel transforms possess only algebraic-type singularities, located at the roots of the characteristic equation associated with the difference equation. If the $f_k(z)$ are instead allowed to have (isolated) singularities in the finite plane, these give rise to additional singularities of the Mellin--Borel transforms at the origin. In particular, if the singularities are poles, then---as suggested in \cite{OldeDaalhuis2004}---the Mellin--Borel transforms develop singularities at the origin that are described by powers of logarithms, and they satisfy linear differential equations of order higher than that of the original difference equation. It is unclear whether an exact expression for the logarithmic--power singularity exists in terms of the Mellin--Borel transforms of the other solutions, analogous to \eqref{connection} at the algebraic singularities. Such an expression would be essential for developing hyperasymptotic refinements of inverse factorial series solutions in this more general setting.

Another possible extension concerns difference equations with an irregular singularity of higher integer rank at infinity. In the case of polynomial potentials, one expects a connection with the hyperasymptotic theory for linear differential equations possessing a higher-rank irregular singularity at infinity. This hyperasymptotic theory was investigated in the paper \cite{Murphy1997} for second-order equations and in the PhD thesis \cite{Murphy2001} for higher-order equations. However, these works do not employ the Borel transform, instead deriving their results via the Cauchy--Heine method.

Finally, it would be interesting to study the case where the characteristic equation \eqref{chareq} has multiple roots. Asymptotic expansions for solutions in this setting have been constructed for second-order difference equations, for example in \cite{Culmer1963} and \cite{Wong1992}, while the higher-order case was studied in \cite{Adams1928}. In these works, the asymptotic expansions are expressed in descending powers of $z$. For second-order equations, the form of these expansions suggests that an analogue of the inverse factorial series solutions could take the form
\[
\Gamma ^2 (z)\lambda ^{ - z} \sum_{s = 0}^\infty  a_s U(z,\mu  - s,\nu ),
\]
where $U$ is the confluent hypergeometric function, $\lambda$ is the double root of the characteristic equation, and the constants $\mu$, $\nu$, and the coefficients $a_s$ depend on $\lambda$ as well as on the coefficients of the difference equation. A second, linearly independent solution might likewise be expressed in terms of the confluent hypergeometric function $M$. Determining an appropriate analogue of the Mellin--Borel transform for this case requires further investigation.

\section*{Acknowledgement}
The author gratefully acknowledges the referee for his/her insightful comments and valuable suggestions, which have significantly improved the clarity and presentation
of the paper.

\appendix
\section{Computation of the hyperterminants}\label{Appendix}

In this appendix, we present an efficient method for computing the hyperterminants $H^{(\ell + 1)}$.

\begin{proposition}
Let $\ell \ge 1$, and suppose that $\Re(z + M_0 + 1) > 0$, $\Re(M_2) > 2$, and $\Re(M_r) > 1$ for all $r = 1, 3, \ldots, \ell$. Under these conditions, the following convergent expansion holds:
\begin{align*}
& H^{(\ell + 1)}\!\left(z;\mytop{M_0,}{\sigma_0,}
\mytop{\ldots,}{\ldots,}
\mytop{M_\ell}{\sigma_\ell}\right)=\left( \frac{\e^{\pi \im}}{\sigma_0} \right)^{M_0 + 1}
\left( \e^{-\pi \im} \sum_{r = 0}^\ell \sigma_r \right)^{1-\ell+\sum_{r = 0}^\ell M_r}
\sum_{p = 0}^\infty  \left( -\frac{\sigma _0 }{\sigma _1 } \right)^{p + 1} \\ & \hspace{5em}
\times A^{(\ell )}\!\left(p; \mytop{M_1,}{\sigma_1,}
\mytop{\ldots,}{\ldots,}
\mytop{M_\ell}{\sigma_\ell}\right) \frac{1}{\left( z + M_0  + M_1  \right)_{p + 1} } {}_2 F_1\! \left( 
   \mytop{p + 1,M_1  + p}{z + M_0  + M_1  + p + 1}  ;1 + \frac{\sigma _0 }{\sigma _1 } \right),
\end{align*}
where
\[
A^{(1)}\!\left(
  p; \mytop{M_0}{\sigma_0}
\right)
= \delta _{p,0} \e^{\pi \im M_0 } \sigma _0^{1 - M_0 } \Gamma \left( M_0 \right),
\]
with $\delta_{p,0}$ denoting the Kronecker delta, and for $\ell \ge 1$,
\begin{align*}
& A^{(\ell + 1)}\!\left(p;\mytop{M_0,}{\sigma_0,}
\mytop{\ldots,}{\ldots,}
\mytop{M_\ell}{\sigma_\ell}\right) = \e^{\pi \im M_0} \sigma _0^{\ell  - M_0 } \Gamma \left( M_0  + p\right)\sum_{s = 0}^\infty  \left( - \frac{\sigma _0 }{\sigma _1 } \right)^{s + 1} \\ & \hspace{5em}
\times A^{(\ell)}\!\left(s;\mytop{M_1,}{\sigma_1,}
\mytop{\ldots,}{\ldots,}
\mytop{M_\ell}{\sigma_\ell}\right)\frac{\left( s + 1 \right)_p }{\left( M_0  + M_1  - 1 \right)_{p + s + 1} } {}_2F_1\! \left( \mytop{p + s + 1,M_1  + s}{M_0  + M_1  + p + s} ;1 + \frac{\sigma _0 }{\sigma _1 } \right).
\end{align*}
\end{proposition}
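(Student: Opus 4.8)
The plan is to argue by induction on $\ell$, building on the closed form for $H^{(2)}$. For the base case $\ell=1$ the Kronecker delta in $A^{(1)}$ collapses the $p$-series to its single $p=0$ term, and collecting the powers of $\e^{\pm\pi\im}$, $\sigma_0$, $\sigma_1$ and $\sigma_0+\sigma_1$ in the resulting prefactor shows that the claimed identity is precisely the formula for $H^{(2)}$ recorded earlier, cf.~\cite[Eq.~(3.2)]{OldeDaalhuis2009}.

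For the inductive step I would first establish a one-step reduction of $F^{(\ell+1)}$ to lower-order hyperterminants. Writing out the first factor of $F^{(\ell+1)}\!\left(0;\mytop{z+M_0+2}{\sigma_0},\ldots\right)$ gives $F^{(\ell+1)}\!\left(0;\mytop{z+M_0+2}{\sigma_0},\mytop{M_1}{\sigma_1},\ldots,\mytop{M_\ell}{\sigma_\ell}\right)=-\int_0^{[\pi-\arg\sigma_0]}\e^{\sigma_0 t_0}t_0^{z+M_0}F^{(\ell)}\!\left(t_0;\mytop{M_1}{\sigma_1},\ldots,\mytop{M_\ell}{\sigma_\ell}\right)\id t_0$. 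Unpacking $F^{(\ell)}(t_0;\cdots)$ one more level as $\int_0^{[\pi-\arg\sigma_1]}\tfrac{\e^{\sigma_1 t_1}t_1^{M_1-1}}{t_0-t_1}F^{(\ell-1)}(t_1;\cdots)\id t_1$, interchanging the order of integration (with due care for the pole at $t_0=t_1$ relative to the prescribed contours), and carrying out the inner $t_0$-integral, which equals $-F^{(1)}\!\left(t_1;\mytop{z+M_0+1}{\sigma_0}\right)$, I would obtain $F^{(\ell+1)}\!\left(0;\mytop{z+M_0+2}{\sigma_0},\ldots\right)=\int_0^{[\pi-\arg\sigma_1]}\e^{\sigma_1 t_1}t_1^{M_1-1}F^{(\ell-1)}\!\left(t_1;\mytop{M_2}{\sigma_2},\ldots\right)F^{(1)}\!\left(t_1;\mytop{z+M_0+1}{\sigma_0}\right)\id t_1$. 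Substituting the closed form of $F^{(1)}$ in terms of an incomplete gamma function, expanding it by means of $\Gamma(a,w)=\Gamma(a)-w^a\sum_{k\ge0}\tfrac{(-w)^k}{k!(a+k)}$ together with $\e^{\sigma_0 t_1}=\sum_{m\ge0}(\sigma_0 t_1)^m/m!$, and recognising each resulting integral $\int_0^{[\pi-\arg\sigma_1]}\e^{\sigma_1 t_1}t_1^{\nu-1}F^{(\ell-1)}(t_1;\cdots)\id t_1$ as $-F^{(\ell)}\!\left(0;\mytop{\nu+1}{\sigma_1},\mytop{M_2}{\sigma_2},\ldots\right)$, I obtain a recursion expressing $F^{(\ell+1)}(0;\cdots)$, hence $H^{(\ell+1)}$, through the quantities $F^{(\ell)}\!\left(0;\mytop{\,\cdot\,}{\sigma_1},\mytop{M_2}{\sigma_2},\ldots,\mytop{M_\ell}{\sigma_\ell}\right)$, with explicit weights built from $\sigma_0^m/m!$, $\Gamma(-z-M_0)$ and $\tfrac{(-\sigma_0)^k}{k!(k-z-M_0)}$.

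Finally I would substitute the induction hypothesis — the Proposition with $\ell$ replaced by $\ell-1$, rewritten as $F^{(\ell)}\!\left(0;\mytop{K+2}{\sigma_1},\mytop{M_2}{\sigma_2},\ldots\right)=-\Gamma(K+1)\e^{\pi\im(K+1)}\sigma_1^{-(K+1)}\sum_{p\ge0}(-\sigma_1/\sigma_2)^{p+1}A^{(\ell-1)}\!\left(p;\mytop{M_2}{\sigma_2},\ldots\right)\tfrac{1}{(K+M_2)_{p+1}}\,{}_2F_1\!\left(\mytop{p+1,\,M_2+p}{K+M_2+p+1};1+\sigma_1/\sigma_2\right)$ — into the recursion and collect terms. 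The internal $k$- and $m$-sums are evaluated with the binomial identity $\sum_{k=0}^q\binom{q}{k}\tfrac{(-1)^k}{k+x}=\tfrac{q!}{(x)_{q+1}}$; the resulting sums over the new indices are then resummed into Gauss functions, and standard contiguous and linear-transformation relations for ${}_2F_1$ bring these to argument $1+\sigma_0/\sigma_1$, matching the outer hyperterminant factor in the claimed formula, while the ${}_2F_1$ with argument $1+\sigma_1/\sigma_2$ that persist are exactly the ones packaged inside $A^{(\ell)}$. What emerges is the single-sum formula claimed for $H^{(\ell+1)}$, the stated recursion for $A^{(\ell)}$ appearing as the coefficient identity. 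Throughout one must track the branches (the $\e^{\pm\pi\im}$ factors and the arguments of the $\sigma_r$ and of their partial sums) and the analytic prerequisites: the contour rotations and the interchanges of summation with integration are legitimate when $\sigma_0$ is suitably small relative to $\sigma_1$ and the $\Re(M_r)$ are large, and the identity is then extended by analytic continuation to the region $\Re(z+M_0+1)>0$, $\Re(M_2)>2$, $\Re(M_r)>1$ on which the right-hand side converges. The hard part will be precisely this last step: verifying that the multiple sums collapse to the stated $A^{(\ell)}$-recursion requires a careful and rather lengthy manipulation of Gauss-function parameters, and the accompanying convergence and branch bookkeeping — in particular the role of the special hypothesis $\Re(M_2)>2$ — is delicate.
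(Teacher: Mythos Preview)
Your approach is genuinely different from the paper's, and considerably harder. The paper does \emph{not} proceed by induction on $\ell$. Instead it writes
\[
F^{(\ell+1)}\!\left(0;\mytop{z+M_0+2}{\sigma_0},\ldots\right)
= -\int_0^{[\pi-\arg\sigma_0]} \e^{\sigma_0 t}\,t^{z+M_0}\,F^{(\ell)}\!\left(t;\mytop{M_1}{\sigma_1},\ldots\right)\id t
\]
and then invokes a ready-made convergent expansion of $F^{(\ell)}(t;\cdots)$ in confluent hypergeometric functions,
\[
F^{(\ell)}(t;\cdots)=\sum_{p\ge 0} A^{(\ell)}(p;\cdots)\,U(p+1,2-M_1,\sigma_1 t),
\]
taken from \cite[Theorems~2 and~3]{OldeDaalhuis1998a}, where the coefficients $A^{(\ell)}$ and their recursion are \emph{already defined}. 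Term-by-term integration then reduces to the tabulated Laplace transform \cite[Eq.~(13.10.7)]{DLMF} of $U$, which produces a ${}_2F_1$; one linear transformation \cite[Eq.~(15.8.1)]{DLMF} and the definition \eqref{Hdef} finish the job. The condition $\Re(M_2)>2$ is exactly what \cite{OldeDaalhuis1998a} requires for absolute and uniform convergence of the $U$-series, which is why it appears asymmetrically.

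Your route---peeling off two integration layers, expanding $F^{(1)}$ via the power series of $\gamma(a,w)$ and $\e^{\sigma_0 t_1}$, and then resumming---is in principle self-contained, but it has a real gap. First, the series you introduce for $F^{(1)}(t_1;\cdots)$ are power series in $t_1$, and you must integrate them against $\e^{\sigma_1 t_1}t_1^{M_1-1}F^{(\ell-1)}(t_1;\cdots)$ along a ray to infinity; the interchange of the double sum with this improper integral is not obviously legitimate, and ``$\sigma_0$ suitably small relative to $\sigma_1$'' does not by itself control the tail. Second, and more seriously, the step you flag as ``hard''---collapsing the resulting triple sum in $k$, $m$, and the induction index into the precise $A^{(\ell)}$-recursion with the right ${}_2F_1$ at argument $1+\sigma_0/\sigma_1$---is the entire content of the identity, and you have only stated that it should happen, not shown how. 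The paper sidesteps both issues by importing the $U$-expansion, whose convergence is already established and whose coefficients are the $A^{(\ell)}$ by definition.
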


The Gauss hypergeometric functions ${}_2F_1$ appearing in the proposition can be evaluated using standard numerical techniques; see, for example, \cite[\href{https://dlmf.nist.gov/15.19}{\S15.19}]{DLMF} and the references therein for computational methods and implementations.

\begin{proof} Let $\ell \ge 1$, and suppose that $\Re(z + M_0 + 1) > 0$ and $\Re(M_r) > 1$ for $r = 1, \ldots, \ell$. From the definition \eqref{hyperF} of the hyperterminants, it follows directly that
\[
F^{(\ell  + 1)} \! \left( 0;\mytop{z+M_0+2,}{\sigma_0,}
\mytop{M_1,}{\sigma_1,}
\mytop{\ldots,}{\ldots,}
\mytop{M_\ell}{\sigma_\ell} \right) 
= - \int_0^{[\pi - \arg \sigma_0]} 
\e^{\sigma_0 t} t^{z + M_0} 
F^{(\ell)}\!\left(t;\mytop{M_1,}{\sigma_1,}
\mytop{\ldots,}{\ldots,}
\mytop{M_\ell}{\sigma_\ell}\right) \id t.
\]
If we additionally assume that $\Re(M_2) > 2$, then the function $F^{(\ell)}$ admits the absolutely and uniformly convergent expansion
\[
F^{(\ell)}\!\left(t;\mytop{M_1,}{\sigma_1,}
\mytop{\ldots,}{\ldots,}
\mytop{M_\ell}{\sigma_\ell}\right) = \sum_{p = 0}^\infty  A^{(\ell)}\!\left(p; \mytop{M_1,}{\sigma_1,}
\mytop{\ldots,}{\ldots,}
\mytop{M_\ell}{\sigma_\ell}\right) U\left( p + 1,2 - M_1 ,\sigma _1 t \right),
\]
where $|\arg(\sigma_1 t)| \le \pi$, and $U$ denotes the confluent hypergeometric function (see \cite[Theorems 2 and 3]{OldeDaalhuis1998a}). Substituting this into the previous expression yields
\begin{align*}
F^{(\ell  + 1)} \! \left( 0;\mytop{z+M_0+2,}{\sigma_0,}
\mytop{M_1,}{\sigma_1,}
\mytop{\ldots,}{\ldots,}
\mytop{M_\ell}{\sigma_\ell} \right)   &=  - \sum_{p = 0}^\infty A^{(\ell)}\!\left(p; \mytop{M_1,}{\sigma_1,}
\mytop{\ldots,}{\ldots,}
\mytop{M_\ell}{\sigma_\ell}\right)\\ & \quad \times \int_0^{\left[ {\pi  - \arg \sigma _0 } \right]} \e^{\sigma _0 t} t^{z + M_0 } U\left( p + 1,2 - M_1 ,\sigma _1 t \right)\id t.
\end{align*}
Assume temporarily that $\frac{\pi}{2} < \arg \sigma _0  - \arg \sigma _1  < \frac{3\pi}{2}$. Using the known Laplace transform of the confluent hypergeometric function $U$ \cite[Eq.~ \href{http://dlmf.nist.gov/13.10.E7}{(13.10.7)}]{DLMF} together with a linear transformation formula for ${}_2F_1$ \cite[Eq.~\href{http://dlmf.nist.gov/15.8.E1}{(15.8.1)}]{DLMF}, we obtain
\begin{align*}
&\int_0^{[\pi - \arg \sigma_0]} \e^{\sigma_0 t} t^{z + M_0} 
  U\left(p + 1, 2 - M_1, \sigma_1 t \right) \id t \\
&= \left( \frac{\e^{\pi \im}}{\sigma_0} \right)^{z + M_0 + 1} 
  \frac{\Gamma\left(z + M_0 + 1\right)}{\left(z + M_0 + M_1\right)_{p + 1}} 
  {}_2F_1 \!\left( 
      \mytop{p + 1, z + M_0 + 1}{z + M_0 + M_1 + p + 1} ; 1 + \frac{\sigma_1}{\sigma_0} 
  \right) \\
&= \left( \frac{\e^{\pi \im}}{\sigma_0} \right)^{z + M_0 + 1} 
  \frac{\Gamma\left(z + M_0 + 1\right)}{\left(z + M_0 + M_1\right)_{p + 1}} 
  \left( -\frac{\sigma_0}{\sigma_1} \right)^{p + 1} 
  {}_2F_1\! \left( 
    \mytop{p + 1, M_1 + p }{z + M_0 + M_1 + p + 1} ; 1 + \frac{\sigma_0}{\sigma_1} 
  \right).
\end{align*}
Accordingly, we find that
\begin{align*}
& F^{(\ell  + 1)} \! \left( 0;\mytop{z+M_0+2,}{\sigma_0,}
\mytop{M_1,}{\sigma_1,}
\mytop{\ldots,}{\ldots,}
\mytop{M_\ell}{\sigma_\ell} \right)  =- \left( \frac{\e^{\pi \im} }{\sigma _0 }\right)^{z + M_0  + 1} \Gamma \left( z + M_0  + 1 \right)\sum_{p = 0}^\infty \left(  - \frac{\sigma _0 }{\sigma _1 } \right)^{p + 1}  \\ & \hspace{5em} \times A^{(\ell)}\!\left(p; \mytop{M_1,}{\sigma_1,}
\mytop{\ldots,}{\ldots,}
\mytop{M_\ell}{\sigma_\ell}\right)\frac{1}{\left( z + M_0  + M_1 \right)_{p + 1}}{}_2F_1\! \left( 
    \mytop{p + 1, M_1 + p }{z + M_0 + M_1 + p + 1} ; 1 + \frac{\sigma_0}{\sigma_1} 
  \right).
\end{align*}
The constraint on $\arg \sigma _0  - \arg \sigma _1$ may now be removed through analytic continuation. Substituting this expression into \eqref{Hdef} yields the desired expansion for $H^{(\ell+1)}$. The recursive formula for the coefficients $A^{(\ell+1)}$ follows from \cite[Theorem 3]{OldeDaalhuis1998a}, using a linear transformation of the Gauss hypergeometric function  \cite[Eq.~\href{http://dlmf.nist.gov/15.8.E1}{(15.8.1)}]{DLMF}.
\end{proof}

\let\oldbibitem\bibitem
\def\bibitem{\par\addvspace{3pt}\oldbibitem}

\vspace{-6.3pt}

\end{document}